\definecolor{red}{rgb}{1,0,0}
\definecolor{blue}{rgb}{0,0,1}
\newtheorem{theorem}{Theorem}
\newtheorem{remark}{Remark}
\newtheorem{lemma}{Lemma}
\newtheorem{problem}{Problem}
\newtheorem{algorithm}{Algorithm}
\newcommand\grad{\operatorname{grad}}
\renewcommand\div{\operatorname{div}}
\newcommand\curl{\operatorname{curl}}
\renewcommand\S{{S}}
\begin{document}

\title{Convergence of a $\bm{B}$-$\bm{E}$ based finite element method for MHD models on Lipschitz domains}

\thanks{The title of this article in its original version (see arXiv:1711.11330) is ``Magnetic-Electric 
Formulations for Stationary Magnetohydrodynamics Models".}

\thanks{The work of Kaibo Hu was partly carried out during his affiliation with the University of Oslo, supported by the  European Research Council under the European Union's 
Seventh Framework Programme (FP7/2007-2013) / ERC grant agreement 339643. Weifeng Qiu is partially 
supported by a grant from the Research Grants Council of the Hong Kong Special Administrative Region, China 
(Project No. CityU 11304017).  As a convention the names of the authors are alphabetically ordered. 
all authors contributed equally in this article. }

\author{Kaibo Hu} 
\address{School of Mathematics,
University of Minnesota, Vincent Hall, 206 Church St. SE, Minneapolis, MN, USA.}
\email{khu@umn.edu}
\author{Weifeng Qiu}
\address{Department of Mathematics, City University of Hong Kong, 83 Tat Chee Avenue, Hong Kong, China.}
\email{weifeqiu@cityu.edu.hk}
\author{Ke Shi}
\address{Department of Mathematics and Statistics, Old Dominion University, Norfolk, VA 23529, USA.}
\email{kshi@odu.edu}
\subjclass[2000]{Primary: 65N30, 76W05}
\keywords{magnetohydrodynamics, finite element method, structure-preserving, de Rham complex}


\maketitle

\begin{abstract}
We discuss a class of magnetic-electric fields based finite element schemes for stationary magnetohydrodynamics (MHD) systems with two types of boundary conditions.  We establish a key $L^{3}$ estimate for divergence-free finite element functions for a new type of boundary conditions. With this estimate and a similar one in \cite{hu2015structure}, we rigorously prove the convergence of Picard iterations and the finite element schemes with weak regularity assumptions. These results demonstrate the convergence of the finite element methods for singular solutions. 
\end{abstract}

\section{Introduction}   
Magnetohydrodynamics (MHD) models have various important applications in liquid metal industry, controlled fusion and astronomy etc. There have been extensive discussions on numerical methods for MHD models. However, due to the nonlinear coupling and rich structures of MHD systems, the numerical simulation still remains a challenging and active research area.  This paper is devoted to the analysis of a class of stable and structure-preserving finite element methods.

We consider the following stationary MHD system on a polyhedral domain $\Omega$:
\begin{align}
\begin{cases}
& ( \bm{u} \cdot \nabla) \bm{u}
- R_{e}^{-1} \Delta \bm{u}
- \S \bm{j} \times \bm{B}
+ \nabla p
= \bm{f} , \\
& \bm{j}
- R_{m}^{-1} \nabla \times \bm{B}
= \bm{0} , \\
& \nabla \times \bm{E} = \bm{0} , \\
& \nabla \cdot \bm{B} = 0, \\
& \nabla \cdot \bm{u} = 0, \\
& \bm{j} = \bm{E} + \bm{u} \times \bm{B}.
\end{cases}
\label{eq:MHD-stationary}
\end{align}
Here $\bm{u}$ is the fluid velocity, $p$ is the fluid pressure, $\bm{j}$ is the current density, $\bm{E}$ and $\bm{B}$ are the electric and magnetic fields respectively. 

We mainly consider the following type of boundary conditions:
\begin{equation}
\label{boundary_cond}
\bm{u} = 0, \quad \bm{B} \cdot \bm{n} = 0, \quad \bm{E} \times \bm{n} = \bm{0},\quad \text{on $\partial \Omega$},
\end{equation}
where $\bm{n}$ is the unit normal vector of $\partial \Omega$.
We also consider an alternative boundary condition c.f., \cite{gunzburger1991existence}: 
\begin{align}
\label{boundary_cond2}
\bm{u} = 0, \quad \bm{B} \times \bm{n} = \bm{0}, \quad \bm{E} \cdot \bm{n} = 0,\quad \text{on $\partial \Omega$}.
\end{align}

Finite element discretizations of the MHD system \eqref{eq:MHD-stationary} have a long history. Based on the function and finite element spaces for the magnetic variable $\bm{B}$, these schemes can be classified as $H^{1}$-, $H(\curl)$- and $H(\div)$-based formulations. 
Gunzburger \cite{gunzburger1991existence} studied a finite element method where $\bm{B}$ was discretized in $H^{1}$ with the Lagrange elements. With certain conditions on the boundary data and right hand side, Gunzburger \cite{gunzburger1991existence} proved the existence and uniqueness of the weak solutions and established optimal error estimates for the finite element methods. The domain is assumed to be bounded in $\mathbb{R}^{3}$ which is either convex or has a $C^{1, 1}$ boundary. Under this assumption, the true solution is smooth. 
And the convergence proof in \cite{gunzburger1991existence} also relies on this assumption. 
To remove this restriction on the domain, Sch\"{o}tzau \cite{Schotzau.D.2004a} proposed another variational formulation with the magnetic variable in $H(\curl)$. In the finite element scheme based on this formulation,  $\bm{B}$ is discretized in the $H(\curl)$-conforming N\'{e}d\'{e}lec spaces \cite{Nedelec.J.1980a,Nedelec.J.1986a} and the quasi-optimal convergence of the approximation solutions was shown in \cite{Schotzau.D.2004a}.   We refer to, e.g., \cite{dong2014convergence,greif2010mixed,gerbeau2000stabilized,badia2013unconditionally,shadid2016scalable}, for some variants and the convergence analysis of iterative methods and finite element discretizations based on the these two approaches. 

For MHD systems, magnetic Gauss's law plays an important role in both physics (nonexistence of magnetic monopole) and numerical simulations (c.f., \cite{Brackbill.J;Barnes.D.1980a,Dai.W;Woodward.P.1998b}). However, in the above $H(\curl)$ based approach, magnetic Gauss's law is only preserved in the weak sense.  One way to obtain schemes with precisely preserved magnetic Gauss's law is to use the vector potential of $\bm{B}$, see \cite{adler2018vector,hiptmair2018fully,hiptmair2018splitting} and the references therein. Since the vector potential belongs to $H(\curl)$, this method also falls in the category of $H(\curl)$ based formulations. 

To preserve magnetic Gauss's law precisely on the discrete level with electric and magnetic fields as variables, a class of finite element schemes was developed in \cite{hu2014stable,hu2015structure} for the time dependent and the stationary MHD systems respectively. The magnetic field $\bm{B}$ is discretized by the $H(\div)$ conforming Raviart-Tomas \cite{Raviart.P;Thomas.J.1977a} or BDM \cite{brezzi1985two} elements.   An electric variable, either the electric field $\bm{E}$ in \cite{hu2014stable} or the current density $\bm{j}$ in \cite{hu2015structure}, is retained in the formulation and discretized by the $H(\curl)$ conforming elements in the same discrete de Rham complex.

In this paper, we prove the convergence of the $H(\div)$ based methods for stationary MHD problems with weak regularity assumptions. Several variants of this type of schemes exist,  and we choose to consider a $\bm{B}$-$\bm{E}$ based formulation in the discussions below. This formulation is the stationary case of \cite{hu2014stable} and differs from the $\bm{B}$-$\bm{j}$ formulation in \cite{hu2015structure} by a projection of nonlinear terms (see Section \ref{sec:vatiational-formulation} below for details). Therefore we do not claim the discretization studied in this paper as a brand new method, although the precise formulation has not appeared in the literature to the best of our knowledge. 

To show the convergence with both types of boundary conditions, we 
extend the key Hodge mapping and $L^{3}$ estimates established in \cite{hu2015structure} to 
a new type of boundary condition. 
With an analysis based on the reduced systems, we show that the schemes are unconditional 
stable and well-posed.
We prove the convergence of the finite element scheme by carefully choosing interpolation functions (see \eqref{div0_curl0} below). Comparing with the convergence analysis in \cite{hu2015structure} for the $\bm{B}$-$\bm{j}$ based finite element methods, we adopt a new strategy and, as a result, only assume weak regularity of the solutions in this paper (\eqref{regularity} below).
This demonstrates the convergence of the Picard iterations and the finite element schemes for singular solutions.


We also show another strategy to impose the strong divergence-free condition, instead of using Lagrange multipliers as in the previous work \cite{hu2015structure} by one of the authors and collaborator. We introduce an augmented term $(\nabla\cdot\bm{B}, \nabla\cdot\bm{C})$ in the variational formulation. Thanks to the structure-preserving properties, these two approaches are actually equivalent and Faraday's law $\nabla\cdot\bm{B}=0$ also holds precisely on the discrete level.

The remaining part of this paper will be organized as follows. In Section $2$, we provide some preliminary settings. 
In Section $3$, we give two types of $L^{3}$ estimates for the discrete magnetic field. In Sections $4,5$ and $6$, 
we formulate the numerical method for the MHD models with boundary condition (\ref{boundary_cond}), 
show that its Picard iterations are well-posed and convergent, and show the optimal convergence of approximations 
to the velocity field and magnetic field even for singular solutions. In Section $7$, we generalize 
the numerical method for the MHD models with boundary condition (\ref{boundary_cond2}), provide its 
basic properties and show the optimal convergence.

\section{Preliminaries}

{ We assume that $\Omega$ is a bounded Lipschitz
  polyhedron. For the ease of exposition, we further assume that
  $\Omega$ is contractable, i.e. there is no nontrivial harmonic
  form. 

 Using the standard notation for the inner product and the norm of the
{$L^{2}$} space
$$
(u,v):=\int_{\Omega}u\cdot v \mathrm{d}x,\quad
\|u\|:=\left(\int_{\Omega} \lvert u\rvert^2 \mathrm{d}x\right)^{1/2}. 
$$
The scalar function space $H^{1}$ is defined by
$$
H^1(\Omega):=\left\{v\in L^{2}(\Omega): \nabla\bm{v}\in L^{2}(\Omega)\right \}. 
$$
For a function $u \in W^{k,p}(\Omega)$, we use $\|u\|_{k,p}$ for the standard norm in $W^{k,p}(\Omega)$. When $p = 2$ we drop the index $p$, i.e. $\|u\|_k := \|u\|_{k,2}$ and $\|u\|:= \|u\|_{0,2}$. We define vector function spaces 
$$
H(\curl,\Omega):=\{\bm{v}\in L^2(\Omega), \nabla\times \bm{v}\in L^2(\Omega)\},  
$$
and
$$
H(\div,\Omega):=\{\bm{w}\in L^2(\Omega), \nabla\cdot \bm{w}\in L^2(\Omega)\}.
$$
With explicit boundary conditions, we define
$$
H_{0}^1(\Omega):=\left\{v\in H^{1}(\Omega): v\left |_{\partial \Omega} =0\right .\right \},
$$
$$
H_0(\curl,\Omega):=\{\bm{v}\in H(\curl, \Omega), \bm{v}\times \bm{n}=0 \mbox{ on } \partial\Omega\},
$$
and
$$
H_0(\div,\Omega):=\{\bm{w}\in H(\div, \Omega), \bm{w}\cdot \bm{n}=0 \mbox{ on } \partial\Omega\}.
$$
 We often use the following notation:
$$
L^2_0(\Omega):=\left\{v\in L^2(\Omega):  \int_\Omega v=0 \right\}.
$$
The corresponding norms in $H^{1}$, $H(\curl)$ and $H(\div)$ spaces are defined by
$$
\|\bm{u}\|_{1}^{2}=\|\bm{u}\|^{2}+\|\nabla \bm{u}\|^{2},
$$
$$
\|\bm{F}\|_{\mathrm{curl}}^{2}:=\|\bm{F}\|^{2}+\|\nabla\times \bm{F}\|^{2},
$$
$$
\|\bm{C}\|_{\mathrm{div}}^{2}:= \|\bm{C}\|^{2}+\|\nabla\cdot \bm{C}\|^{2}.
$$

  For a general Banach space
$\bm{Y}$ with a norm $\|\cdot\|_{\bm{Y}}$, the dual space
$\bm{Y}^{\ast}$ is equipped with the dual norm defined by
$$
\|\bm{h}\|_{\bm{Y}^{\ast}}:=\sup_{0 \neq \bm{y}\in \bm{Y}}\frac{\langle \bm{h}, \bm{y} \rangle}{\|\bm{y}\|_{\bm{Y}}}.
$$
For the special case that $\bm{Y}=H_0^1(\Omega)$, the dual space
$\bm{Y}^\ast=H^{-1}(\Omega)$ and the corresponding norm is denoted by
$\|\cdot\|_{-1}$, which is defined by
$$
\|\bm{f}\|_{-1}:=\sup_{0 \neq \bm{v}\in [{H}_{0}^{1}(\Omega)]^{3}}\frac{\langle \bm{f}, \bm{v} \rangle}{\|\nabla\bm{v}\|}.
$$

In this paper, we will use $C$ to denote a generic constant in inequalities which is independent of the exact solution and the mesh size. For instance, we will need the following Poincar\'{e}'s inequality:
\begin{align}\label{def:C1}
\|{u}\|_{0,6}\leq C\|\nabla{u}\|,\quad \forall ~ {u}\in H^{1}_{0}(\Omega).
\end{align}

Since the fluid convection frequently appears in subsequent
discussions, we introduce a trilinear form
$$
L(\bm{w}; \bm{u}, \bm{v}):=\frac{1}{2}[((\bm{w}\cdot \nabla)\bm{u},
\bm{v})- ((\bm{w}\cdot\nabla) \bm{v}, \bm{u}) ].
$$
Considering $\bm{w}$ as a known function, $L(\bm{w}; \bm{u}, \bm{v})$ is a
bilinear form of $\bm{u}$ and $\bm{v}$. 

Let $\mathcal{T}_{h}$ be a triangulation of $\Omega$, and we assume
that the mesh is regular and quasi-uniform, so that the inverse
estimates hold \cite{brenner2008mathematical}.  We use $P_{k}(\mathcal{T}_{h})$ to denote the piecewise polynomial space of degree $k$ on $\mathcal{T}_{h}$. The finite element de
Rham sequence is an abstract framework to unify the above spaces and
their discretizations, see e.g. Arnold, Falk, Winther
\cite{Arnold.D;Falk.R;Winther.R.2006a,Arnold.D;Falk.R;Winther.R.2010a},
Hiptmair \cite{Hiptmair.R.2002a}, Bossavit \cite{Bossavit.A.1998a} for
more detailed discussions. Figure~\ref{exact-sequence}  and Figure~\ref{exact-sequence-nobc} 
show the commuting diagrams we will use. The electric field $\bm{E}$ and the magnetic
field $\bm{B}$  will be discretized in the middle two spaces respectively. 
Notice that though projections in Figure~\ref{exact-sequence} can be different from 
corresponding ones in Figure~\ref{exact-sequence-nobc}, we don't need to distinguish them 
in any analysis in this paper.
\begin{figure}[ht!]
\begin{equation*}
\begin{CD}
H_0(\mathrm{grad})   @> {\mathrm{grad}} >> H_0(\mathrm{curl}) 
@>{\mathrm{curl}} >> H_0(\mathrm{div})  @> {\mathrm{div}} >> L_0^2  \\    
 @VV\Pi^{\mathrm{grad}} V @VV\Pi^{\mathrm{curl}} V @VV\Pi^{\mathrm{div}} V
@VV\Pi^0 V\\  
H^h_0(\mathrm{grad})  @>{\mathrm{grad}} >> H^h_0(\mathrm{curl})  @>{\mathrm{curl}} >> H^h_0(\mathrm{div}) @> {\mathrm{div}} >> L^{2,h}_0
\end{CD}
\end{equation*}
\caption{Continuous and discrete de Rham sequence - homogeneous boundary conditions}
\label{exact-sequence}
\end{figure}
\begin{figure}[ht!]
\begin{equation*}
\begin{CD}
H(\mathrm{grad})   @> {\mathrm{grad}} >> H(\mathrm{curl}) 
@>{\mathrm{curl}} >> H(\mathrm{div})  @> {\mathrm{div}} >> L^2  \\    
 @VV\Pi^{\mathrm{grad}} V @VV\Pi^{\mathrm{curl}} V @VV\Pi^{\mathrm{div}} V
@VV\Pi^0 V\\  
H^h(\mathrm{grad})  @>{\mathrm{grad}} >> H^h(\mathrm{curl})  @>{\mathrm{curl}} 
>> H^h(\mathrm{div}) @> {\mathrm{div}} >> L^{2,h}
\end{CD}
\end{equation*}
\caption{Continuous and discrete de Rham sequence - no boundary condition}
\label{exact-sequence-nobc}
\end{figure}

As we shall see, $H(\mathrm{div})$ functions with vanishing divergence will play an important role in the study. So we define on the continuous level
$$
H_{0}(\mathrm{div}0, \Omega):=\{\bm{C}\in H_{0}(\mathrm{div}, \Omega): \nabla\cdot\bm{C}=0  \},
$$
and the finite element subspace
$$
H_{0}^{h}(\mathrm{div}0, \Omega):=\{\bm{C}_{h}\in H_{0}^{h}(\mathrm{div}, \Omega): \nabla\cdot\bm{C}_{h}=0  \}.
$$

 We use $\bm{V}_{h}$ to denote the finite element
subspace of velocity $\bm{u}_{h}$, and $Q_{h}$ for pressure
$p_{h}$. There are many existing stable pairs for $\bm{V}_{h}$ and
$Q_{h}$, for example, the Taylor-Hood elements
\cite{Girault.V;Raviart.P.1986a, Boffi.D;Brezzi.F;Fortin.M.2013a}.
Spaces $H^{h}_{0}(\mathrm{div}, \Omega)$ and $ L_{0}^{2, h}(\Omega)$ are finite
elements from the discrete de Rham
sequence. For these spaces we use the explicit names for clarity, and use the notation 
$\bm{V}_{h}$ and $\bm{Q}_{h}$ for the fluid part to indicate that they may be different from 
$H_{0}^{h}(\grad, \Omega)$ and $L^{2, h}_{0}(\Omega)$ in the de Rham sequence. We use $\bm{V}_{h}^{0}$ to denote the discrete velocity space, i.e.
$$
\bm{V}_{h}^{0}:=\left \{\bm{v}_{h}\in \bm{V}_{h}: (\nabla\cdot\bm{v}_{h}, q_{h})=0, ~\forall q_{h}\in Q_{h}\right \}.
$$

There is a unified theory for the discrete de Rham sequence  of arbitrary
order \cite{Boffi.D;Brezzi.F;Fortin.M.2013a,
  Arnold.D;Falk.R;Winther.R.2006a, Arnold.D;Falk.R;Winther.R.2010a}.
In the case $n = 3$, the lowest order elements can be represented as:
\begin{equation*}
\begin{diagram}
\mathbb{R} & \rTo^{\subset} &  \mathcal{P}_{3}\Lambda^{0} & \rTo^{d} & \mathcal{P}_{2} \Lambda^{1}& \rTo^{d} &\mathcal{P}_{1} \Lambda^{2}& \rTo^{d} & \mathcal{P}_{0} \Lambda^{3}& \rTo & 0,\\
\mathbb{R} & \rTo^{\subset} & \mathcal{P}_{2}\Lambda^{0} & \rTo^{d} &\mathcal{P}_{1} \Lambda^{1}& \rTo^{d} &\mathcal{P}_{1}^{-} \Lambda^{2}& \rTo^{d} & \mathcal{P}_{0} \Lambda^{3} & \rTo & 0,\\
\mathbb{R} & \rTo^{\subset} &  \mathcal{P}_{2}\Lambda^{0}  & \rTo^{d} &\mathcal{P}_{2}^{-} \Lambda^{1}& \rTo^{d} &\mathcal{P}_{1} \Lambda^{2}& \rTo^{d} & \mathcal{P}_{0} \Lambda^{3}& \rTo & 0,\\
\mathbb{R} & \rTo^{\subset} &   \mathcal{P}_{1}\Lambda^{0}  & \rTo^{d} &\mathcal{P}_{1}^{-} \Lambda^{1}& \rTo^{d} &\mathcal{P}_{1}^{-} \Lambda^{2} & \rTo^{d} &  \mathcal{P}_{0} \Lambda^{3}& \rTo&  0.
\end{diagram}
\end{equation*}
The correspondence between the language of differential forms and classical finite element methods is summarized in Table \ref{tab:mixed-fem}. 

To obtain compatible finite element schemes, below we require that the discrete spaces 
 $H_{0}^{h}(\mathrm{curl}, \Omega)$, $H_{0}^{h}(\mathrm{div}, \Omega)$ and $L^{2, h}_{0}(\Omega)$ belong to the same finite element de Rham sequence. 
  \begin{table}[H]
\centering
\begin{tabular}{c  c l}
\hline\noalign{\smallskip}
$k$ & $\Lambda_{h}^{k}(\Omega)$ & Classical finite element space \\[0.5ex]
\noalign{\smallskip}\hline\noalign{\smallskip}
0 & $\mathcal{P}_{r} \Lambda^{0}(\mathcal{T})$ & Lagrange elements of degree $\leq r$ \\
1 & $\mathcal{P}_{r} \Lambda^{1}(\mathcal{T})$ & Nedelec 2nd-kind $H(\mathrm{curl})$ elements of degree $\leq r$ \\
2 & $\mathcal{P}_{r} \Lambda^{2}(\mathcal{T})$ & Nedelec 2nd-kind $H(\mathrm{div})$ elements of degree $\leq r$ \\
3 & $\mathcal{P}_{r} \Lambda^{3}(\mathcal{T})$ & discontinuous elements of degree $\leq r$ \\
\noalign{\smallskip}\hline\noalign{\smallskip}
0 & $\mathcal{P}_{r}^{-} \Lambda^{0}(\mathcal{T})$ & Lagrange elements of degree $\leq r$ \\
1 & $\mathcal{P}_{r}^{-} \Lambda^{1}(\mathcal{T})$ & Nedelec 1st-kind $H(\mathrm{curl})$ elements of order $r-1$ \\
2 & $\mathcal{P}_{r}^{-} \Lambda^{2}(\mathcal{T})$ & Nedelec 1st-kind $H(\mathrm{div})$ elements of order $r-1$ \\
3 & $\mathcal{P}_{r}^{-} \Lambda^{3}(\mathcal{T})$ & discontinuous elements of degree $\leq r-1$ \\[1ex]
\noalign{\smallskip}\hline
\end{tabular}
\caption{Correspondences between finite element differential forms and the classical finite element spaces for $n = 3$ (from \cite{Arnold.D;Falk.R;Winther.R.2006a})}\label{tab:mixed-fem}
\end{table}

As we shall see, it is useful to group the spaces to define 
$$
\bm{X}_{h}:=\bm{V}_{h}\times 
H^{h}_{0}(\mathrm{curl}, \Omega)\times H_{0}^{h}(\mathrm{div},
\Omega).
$$
and group $Q_{h}\times L_{0}^{2, h}(\Omega)$ to define
$$
\bm{Y}_{h}:=Q_{h}\times L_{0}^{2, h}(\Omega).
$$

For the analysis, we also need to define a reduced space, where $\bm{E}_{h}$ is eliminated:
$$
\bm{W}_{h}:=\bm{V}_{h}\times H^{h}_{0}(\mathrm{div}, \Omega).
$$

Denote the kernel space
$$
\bm{X}^{00}_{h}:=\bm{V}_{h}^{0}\times 
H^{h}_{0}(\mathrm{curl}, \Omega)\times H_{0}^{h}(\mathrm{div}0, \Omega),
$$
and
$$
\bm{W}_{h}^{00}:=\bm{V}_{h}^{0}\times H^{h}_{0}(\mathrm{div}0, \Omega).
$$
By definition, any $(\bm{u}_{h}, \bm{B}_{h})\in \bm{W}^{00}_{h}$ satisfies $(\nabla\cdot\bm{u}_{h}, q_{h})=0, ~~\forall q_{h}\in Q_{h}$ and  $\nabla\cdot\bm{B}_{h}=0$.

In order to define appropriate norms, we introduce the discrete curl operator on the discrete
level. For any $\bm{C}_{h}\in H^{h}_{0}(\mathrm{div}, \Omega)$, define
$\nabla_{h}\times \bm{C}_{h}\in H^{h}_{0}(\mathrm{curl}, \Omega)$ by:
\begin{equation}\label{dis_curl}
(\nabla_{h}\times \bm{C}_{h}, \bm{F}_{h})=(\bm{C}_{h}, \nabla\times \bm{F}_{h}), \quad \forall \bm{F}_{h}\in H^{h}_{0}(\mathrm{curl},\Omega).
\end{equation}
For any $\bm{w}_{h}\in H_{0}^{h}(\mathrm{curl}, \Omega)$, we define $\nabla_{h}\cdot\bm{w}_{h}\in H_{0}^{h}(\mathrm{grad}, \Omega)$ by
\begin{equation}\label{dis_div}
(\nabla_{h}\cdot \bm{w}_{h}, v_{h})=-(\bm{w}_{h}, \nabla v_{h}), \quad \forall v_{h}\in H_{0}^{h}(\mathrm{grad}, \Omega).
\end{equation}

We define $\mathbb{P}: L^{2}(\Omega)\rightarrow H^{h}_{0}(\mathrm{curl},\Omega)$ to be the $L^{2}$ projection
$$
(\mathbb{P}\phi, \bm{F}_{h})=(\phi, \bm{F}_{h}), \quad\forall \bm{F}_{h}\in H^{h}_{0}(\mathrm{curl},\Omega), \phi\in L^{2}(\Omega).
$$
We further define $\|\cdot\|_{d}$ as a modified norm of $H^{h}_{0}(\mathrm{div}, \Omega)$ by
$$
\|\bm{C}_{h}\|_{d}^{2}:=\|\bm{C}_{h}\|^{2}+\|\nabla\cdot \bm{C}_{h}\|^{2}+\|\nabla_{h}\times \bm{C}_{h}\|^{2}.
$$ 

Now we define the norms for various product spaces.   For space $\bm{Y}_{h}$,
we define
\begin{align}\label{norm-Y}
\|(q,r)\|_{\bm{Y}}^{2}:=\|q\|^{2}+\|r\|^{2}.
\end{align}
For other product spaces,   we define 
\begin{align}\label{stationary-BE-norm-X}
\|(\bm{v}, \bm{F}, \bm{C})\|_{\bm{X}}^{2}:=\|\bm{v}\|^{2}+\|\nabla \bm{v}
\|^{2}+\|\nabla\times \bm{F} \|^{2}&+\|\bm{F}+\bm{v}\times \bm{B}^{-}\|^{2}+\|\bm{C}\|^{2}+\|\nabla\cdot\bm{C}\|^{2},\\&\nonumber
\forall(\bm{v}, \bm{F}, \bm{C})\in \bm{X}_{h},
\end{align}
and
$$
\|(\bm{u}_{h}, \bm{B}_{h})\|_{\bm{W}}^{2}:=\|\bm{u}_{h}\|_{1}^{2}+\|\bm{B}_{h}\|_{d}^{2}, \quad \forall (\bm{u}_{h}, \bm{B}_{h})\in {\bm{W}}_{h}.
$$
Here $\bm{B}^{-}\in H(\div, \Omega)$ is a given function.

The constant $sR_{m}^{-1}$ will appear in the discussions below frequently, therefore we denote
$$
\alpha:=sR_{m}^{-1}.
$$

\section{Hodge mapping and $L^{p}$ estimates for divergence-free finite elements} 
 
In this section we present some key $L^3$ embedding results which are crucial for our analysis in the following sections.

\begin{theorem}\label{L3estimate}
For any function $\bm{d}_h \in H_0^h(\div0, \Omega)$, we have
\[
\|\bm{d}_h\|_{0,3} \le C \|\nabla_h \times \bm{d}_h\|,
\]
where the generic constant $C$ solely depends on $\Omega$.
\end{theorem}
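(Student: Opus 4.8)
The plan is to reduce the discrete inequality to a continuous Sobolev embedding by means of a Hodge mapping. The starting observation is that $\bm{d}_h$, being genuinely divergence free with $\bm{d}_h\cdot\bm{n}=0$, already belongs to the continuous space $H_0(\div,\Omega)$; the only reason one cannot apply a continuous estimate directly is that $\bm{d}_h\notin H(\curl,\Omega)$ in general, since its tangential components jump across interior faces and its distributional curl carries surface terms. The discrete curl $\nabla_h\times\bm{d}_h$ is precisely the finite element surrogate for the missing $\curl\bm{d}_h$, and the argument will trade it for a continuous curl through an auxiliary field.

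First I would record the continuous ingredient. On a contractible Lipschitz polyhedron the space $H(\curl,\Omega)\cap H_0(\div,\Omega)$ embeds into $H^{s}(\Omega)$ for some $s\in(1/2,1]$ (Costabel; Amrouche--Bernardi--Dauge--Girault), the essential point being that $s$ may be taken strictly above $1/2$ even without convexity. Since in three dimensions the embedding $H^{s}(\Omega)\hookrightarrow L^{3}(\Omega)$ holds exactly when $s\ge 1/2$, any $\bm{v}$ in this space with $\div\bm{v}=0$ satisfies $\|\bm{v}\|_{0,3}\le C\|\bm{v}\|_{s}\le C(\|\bm{v}\|+\|\curl\bm{v}\|)$. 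A Poincar\'e-type inequality then removes the lower-order term: a field that is curl free, divergence free and has vanishing normal trace is the gradient of a harmonic function with zero Neumann data, hence vanishes on a contractible domain, which yields
\[
\|\bm{v}\|_{0,3}\le C\|\curl\bm{v}\|,\qquad \bm{v}\in H(\curl,\Omega)\cap H_0(\div 0,\Omega).
\]

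The heart of the argument is the construction of a Hodge mapping $\bm{d}\in H(\curl,\Omega)\cap H_0(\div 0,\Omega)$ attached to $\bm{d}_h$, mirroring \cite{hu2015structure} but adapted to the present normal boundary condition $\bm{d}\cdot\bm{n}=0$. I would define $\bm{d}$ through a continuous problem driven by the data $\nabla_h\times\bm{d}_h$, arranging two properties: (i) the curl control $\|\curl\bm{d}\|\le C\|\nabla_h\times\bm{d}_h\|$, so the continuous curl is dominated by its discrete surrogate, and (ii) the commutation $\Pi^{\mathrm{div}}\bm{d}=\bm{d}_h$, so that $\bm{d}-\bm{d}_h=\bm{d}-\Pi^{\mathrm{div}}\bm{d}$ is a pure interpolation error. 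Property (i) should follow from the defining identity $(\nabla_h\times\bm{d}_h,\bm{F}_h)=(\bm{d}_h,\curl\bm{F}_h)$ combined with the commuting diagram of Figure~\ref{exact-sequence}, and property (ii) from the discrete exactness of the de Rham sequence on the contractible domain. I would then assemble the pieces via $\|\bm{d}_h\|_{0,3}\le\|\bm{d}\|_{0,3}+\|\bm{d}-\Pi^{\mathrm{div}}\bm{d}\|_{0,3}$: the first term is bounded by $C\|\curl\bm{d}\|\le C\|\nabla_h\times\bm{d}_h\|$ using the displayed estimate together with (i), while the second is an interpolation error, controlled by combining the $H^{s}$-approximation of $\Pi^{\mathrm{div}}$ with the inverse inequality $\|\bm{w}_h\|_{0,3}\le Ch^{-1/2}\|\bm{w}_h\|$, whose net power $h^{s-1/2}$ is nonnegative precisely because $s>1/2$, and with the regularity bound $\|\bm{d}\|_{s}\le C\|\curl\bm{d}\|$ reducing everything once more to $\|\nabla_h\times\bm{d}_h\|$.

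The main obstacle I anticipate is the construction in the previous paragraph: realizing the Hodge mapping for the normal boundary condition with both the curl control (i) and the commutation (ii) simultaneously, and securing the fractional regularity $\bm{d}\in H^{s}$ with $s>1/2$ on a general, possibly non-convex, Lipschitz domain. This fractional regularity is exactly the analytic input that permits the argument to operate for singular solutions, and it is the feature that distinguishes the boundary condition treated here from the one handled in \cite{hu2015structure}.
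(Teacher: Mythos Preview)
Your overall strategy---Hodge mapping plus fractional regularity plus inverse estimate---is exactly the route taken in \cite{hu2015structure}, which the paper cites for this result (and whose argument is reproduced, with boundary conditions swapped, in the paper's detailed proof of Theorem~\ref{L3estimate_new}). However, your property (ii), the exact commutation $\Pi^{\mathrm{div}}\bm{d}=\bm{d}_h$, is not what the Hodge mapping actually delivers, and insisting on it is where your outline parts from the real proof.

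In the actual construction one defines $H_d\bm{d}_h\in H(\curl,\Omega)\cap H_0(\div0,\Omega)$ by the variational identity $(\curl H_d\bm{d}_h,\curl\bm{v})=(\nabla_h\times\bm{d}_h,\curl\bm{v})$ for all $\bm{v}$ in that space. This gives (i) immediately, but there is no reason for $\Pi^{\mathrm{div}}H_d\bm{d}_h$ to equal $\bm{d}_h$; discrete exactness of the de Rham complex does not manufacture a right inverse of $\Pi^{\mathrm{div}}$ with the required $H(\curl)$ regularity and curl bound. What one proves instead is the $L^2$ estimate $\|\bm{d}_h-H_d\bm{d}_h\|\le Ch^{1/2+\delta}\|\nabla_h\times\bm{d}_h\|$, and this requires a genuine duality argument: one writes $\bm{d}_h-\Pi^{\mathrm{div}}H_d\bm{d}_h=\curl\bm{\phi}_h$, solves an auxiliary curl--curl problem with this right-hand side, and exploits the $H^{1/2+\delta}$ regularity of the auxiliary solution together with the commuting projections to close the loop. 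This is precisely the step you flagged as ``the main obstacle,'' but the resolution is an approximation estimate, not a commutation identity.

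There is also a technical slip in your final assembly: $\bm{d}-\Pi^{\mathrm{div}}\bm{d}$ is not a finite element function, so the inverse inequality cannot be applied to it. The correct device (see the end of the proof of Theorem~\ref{L3estimate_new}) is to insert the $L^2$ projection $\bm{\Pi}$ onto the full broken polynomial space, which fixes $\bm{d}_h$ and is $L^3$--stable; then
\[
\|\bm{d}_h\|_{0,3}\le\|\bm{\Pi}(\bm{d}_h-H_d\bm{d}_h)\|_{0,3}+\|\bm{\Pi} H_d\bm{d}_h\|_{0,3},
\]
the first term is handled by the inverse inequality together with the $L^2$ estimate above, and the second by $L^3$--stability of $\bm{\Pi}$ and the continuous embedding $H^{1/2+\delta}\hookrightarrow L^3$.

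A small orientation point: this theorem \emph{is} the one from \cite{hu2015structure}; it is Theorem~\ref{L3estimate_new}, for the boundary condition $\bm{B}\times\bm{n}=\bm{0}$, that is new to the present paper.
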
 
Theorem \ref{L3estimate} and its proof can be found in \cite[Theorem 1]{hu2015structure}.
For the boundary condition given in \eqref{boundary_cond2}, we have similar estimates.
\begin{theorem}\label{L3estimate_new}
For any function $\bm{d}_h \in H^h(\div0, \Omega)$, we have
\[
\|\bm{d}_h\|_{0,3} \le C \|\tilde{\nabla}_h \times \bm{d}_h\|,
\]
where $\tilde{\nabla}_h \times \bm{d}_h \in H^{h}(\curl, \Omega)$ satisfies
\begin{align*}
(\tilde{\nabla}_h \times \bm{d}_h, \bm{F}) = (\bm{d}_{h}, \nabla\times \bm{F}), 
\quad \forall \bm{F} \in H^{h}(\curl, \Omega).
\end{align*}
The generic constant $C$ solely depends on $\Omega$.
\end{theorem}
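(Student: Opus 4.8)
The plan is to transplant the three–step argument behind Theorem~\ref{L3estimate} to the tangential boundary condition. The starting observation is that the operator $\tilde{\nabla}_{h}\times$ is defined by testing against \emph{all} of $H^{h}(\curl,\Omega)$, with no boundary constraint on $\bm{F}$; its continuous analogue $(\nabla\times\bm{d},\bm{F})=(\bm{d},\nabla\times\bm{F})$ for all $\bm{F}\in H(\curl,\Omega)$ therefore forces the boundary term $\int_{\partial\Omega}(\bm{d}\times\bm{n})\cdot\bm{F}$ to vanish for every such $\bm{F}$, i.e. $\bm{d}\times\bm{n}=0$. Consequently the correct target for the Hodge lift is $H_{0}(\curl,\Omega)\cap H(\div 0,\Omega)$, which is precisely the dual situation to Theorem~\ref{L3estimate}, where the normal and tangential traces exchange roles; apart from this swap the whole argument is parallel, so I would follow the proof of \cite[Theorem~1]{hu2015structure} line by line.

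First I would construct the Hodge mapping $\bm{d}_{h}\mapsto\bm{d}$, producing a continuous $\bm{d}\in H_{0}(\curl,\Omega)\cap H(\div 0,\Omega)$ from which I need exactly two estimates: the approximation bound $\|\bm{d}-\bm{d}_{h}\|\le C h^{1/2}\|\bm{d}\|_{1/2}$ and the curl control $\|\nabla\times\bm{d}\|\le C\|\tilde{\nabla}_{h}\times\bm{d}_{h}\|$. To get the right orientation in the curl bound it is essential not to take an arbitrary lift (whose curl could be arbitrarily large) but the minimal–curl one: I would let $\bm{d}$ solve $\nabla\times\bm{d}=\bm{s}$, $\nabla\cdot\bm{d}=0$, $\bm{d}\times\bm{n}=0$, where $\bm{s}$ is the solenoidal (Helmholtz) part of $\tilde{\nabla}_{h}\times\bm{d}_{h}$. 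Then $\|\nabla\times\bm{d}\|=\|\bm{s}\|\le\|\tilde{\nabla}_{h}\times\bm{d}_{h}\|$ is immediate from the $L^{2}$–boundedness of the Helmholtz projection, while the identity $\Pi^{\div}\bm{d}=\bm{d}_{h}$ (hence the approximation bound, via the standard commuting–projection error estimate for an $H^{1/2}$ field) follows from the exactness and commutativity of the sequences in Figure~\ref{exact-sequence-nobc} together with a discrete Poincar\'e inequality on divergence–free fields, exactly as in Theorem~\ref{L3estimate}.

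Next I would invoke the continuous regularity result on the Lipschitz polyhedron: every field in $H_{0}(\curl,\Omega)\cap H(\div,\Omega)$ lies in $H^{1/2}(\Omega)$, so that with $\nabla\cdot\bm{d}=0$ and a Friedrichs/Poincar\'e inequality on the divergence–free subspace one obtains $\|\bm{d}\|_{1/2}\le C\|\nabla\times\bm{d}\|$. The three–dimensional Sobolev embedding $H^{1/2}(\Omega)\hookrightarrow L^{3}(\Omega)$ then yields $\|\bm{d}\|_{0,3}\le C\|\nabla\times\bm{d}\|$. To transfer this to the discrete function I would write $\|\bm{d}_{h}\|_{0,3}\le\|\bm{d}\|_{0,3}+\|\bm{d}_{h}-\bm{d}\|_{0,3}$ and control the second term by inserting a finite element quasi-interpolant and combining the inverse inequality available on the quasi-uniform mesh with the $L^{2}$– and $L^{3}$–approximation estimates; the two powers of the mesh size, $h^{-1/2}$ from the inverse estimate and $h^{1/2}$ from approximation, cancel and leave $\|\bm{d}_{h}-\bm{d}\|_{0,3}\le C\|\bm{d}\|_{1/2}$. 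Chaining these bounds with the curl control gives $\|\bm{d}_{h}\|_{0,3}\le C\|\tilde{\nabla}_{h}\times\bm{d}_{h}\|$.

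The step I expect to be the main obstacle is the curl control together with $\Pi^{\div}\bm{d}=\bm{d}_{h}$, because $\tilde{\nabla}_{h}\times\bm{d}_{h}$ is only \emph{discretely} divergence–free (it is $L^{2}$–orthogonal to $\nabla H^{h}(\grad,\Omega)$ but its distributional divergence need not vanish), whereas the continuous lift requires an exactly solenoidal datum. Reconciling this discrepancy — showing that replacing $\tilde{\nabla}_{h}\times\bm{d}_{h}$ by its Helmholtz part does not destroy the relation $\Pi^{\div}\bm{d}=\bm{d}_{h}$ up to controllable terms — is exactly where the commuting de Rham structure of Figure~\ref{exact-sequence-nobc} is indispensable, and it is the direct analogue of the corresponding step in the proof of Theorem~\ref{L3estimate}.
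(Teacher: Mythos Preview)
Your overall architecture is correct and matches the paper: the Hodge lift lands in $H_{0}(\curl,\Omega)\cap H(\div0,\Omega)$, one uses the $H^{1/2+\delta}$ regularity there, the Sobolev embedding into $L^{3}$, and an inverse inequality on the mesh to transfer the bound back to $\bm{d}_{h}$. The paper also defines the lift variationally, via
\[
(\nabla\times(H_{d}\bm{d}_{h}),\nabla\times\bm{v})=(\tilde{\nabla}_{h}\times\bm{d}_{h},\nabla\times\bm{v})\quad\forall\,\bm{v}\in H_{0}(\curl,\Omega)\cap H(\div0,\Omega),
\]
which is equivalent to your ``minimal-curl'' construction once the Helmholtz projection is taken with the correct (normal-trace-zero) boundary condition on the solenoidal part.

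The genuine gap is the approximation step. The identity $\Pi^{\div}\bm{d}=\bm{d}_{h}$ that you rely on is \emph{false} in general, and neither this paper nor \cite{hu2015structure} uses it. What actually happens is that $\bm{d}_{h}-\Pi^{\div}(H_{d}\bm{d}_{h})$ is a nonzero discretely divergence-free field, which the paper writes as $\nabla\times\bm{\phi}_{h}$ for some discrete $\bm{\phi}_{h}$ orthogonal to discrete gradients. The bound $\|\bm{d}_{h}-H_{d}\bm{d}_{h}\|\le Ch^{1/2+\delta}\|\tilde{\nabla}_{h}\times\bm{d}_{h}\|$ is then obtained by an Aubin--Nitsche duality argument: one introduces an auxiliary continuous curl--curl problem $\nabla\times\nabla\times\bm{\psi}=\nabla\times\bm{\phi}_{h}$ with $\bm{\psi}\times\bm{n}=0$, uses the regularity $\|\nabla\times\bm{\psi}\|_{1/2+\delta}\le C\|\nabla\times\bm{\phi}_{h}\|$, and exploits the commuting diagram together with the orthogonality of $\bm{\phi}_{h}$ and of $\nabla\times\bm{\psi}$ to gradients to show $\|\nabla\times\bm{\psi}-\bm{\phi}_{h}\|\le Ch^{1/2+\delta}\|\bm{d}_{h}-\Pi^{\div}(H_{d}\bm{d}_{h})\|$. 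The commuting diagram enters \emph{inside} this duality argument (to identify $\nabla\times\Pi^{\curl}(\nabla\times\bm{\psi})=\nabla\times\bm{\phi}_{h}$), not to produce the projection identity you hoped for. Your last paragraph correctly locates the obstacle but misidentifies its resolution; replacing the claimed identity by this duality step closes the proof.
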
 
 
\begin{proof}
We define $Z_{0} = H_{0}(\curl, \Omega)\cap H(\div0, \Omega)$, 
$Z_{0}^{h} = H^{h}(\div, \Omega) \cap H(\div0, \Omega)$. 
Obviously, $\bm{d}_{h} \in Z_{0}^{h}$. 
We define an operator $H_{d}: Z_{0}^{h} \rightarrow Z_{0}$ by 
\begin{align*}
(\nabla\times (H_{d}\bm{d}_{h}), \nabla\times \bm{v}) = (\tilde{\nabla}_{h}\times \bm{d}_{h}, \nabla\times \bm{v}), 
\quad \forall \bm{v} \in Z_{0}.
\end{align*}
Obviously, $H_{d}$ is well defined. Since $H_{d}\bm{d}_{h} \in Z_{0}$, we have 
\begin{align}
\label{H_d_prop1}
\Vert H_{d}\bm{d}_{h}\Vert_{\frac{1}{2}+\delta} \leq C \Vert \nabla\times (H_{d}\bm{d}_{h})\Vert 
\leq C \Vert \tilde{\nabla}_{h}\times \bm{d}_{h} \Vert,
\end{align}
where $\delta \in (0,\frac{1}{2}]$. 

We use the projections $\Pi^{\curl}$ and $ \Pi^{\div} $ 
in the commuting diagram in Figure \ref{exact-sequence-nobc}.

Since $\nabla \cdot (\bm{d}_{h} -  \Pi^{\div} (H_{d}\bm{d}_{h})) = 0$ in $\Omega$, there exists 
$\bm{\phi}_{h} \in \{ \bm{v} \in H^{h}(\curl, \Omega): (\bm{v}, \nabla s)=0, 
\quad \forall s \in H^{h}(\grad,\Omega)\}$, such that 
\begin{align*}
\nabla\times \bm{\phi}_{h} = \bm{d}_{h} -  \Pi^{\div} (H_{d}\bm{d}_{h}).
\end{align*}

We consider the auxiliary problem: 
\begin{align}
\label{aux_problem1}
\nabla\times \nabla\times \bm{\psi} = & \nabla\times \bm{\phi}_{h} \quad \text{ in } \Omega, \\
\nonumber 
\nabla\cdot \bm{\psi} = & 0 \quad \text{ in } \Omega, \\
\nonumber 
\bm{\psi}\times \bm{n} = & \bm{0} \quad \text{ on } \partial\Omega.
\end{align}
Since $\nabla\cdot (\nabla\times \bm{\phi}_{h}) = 0$ in $\Omega$, the auxiliary problem (\ref{aux_problem1}) 
is well-posed. Obviously, $\nabla\times \bm{\psi}$ satisfies
\begin{align*}
\nabla\times (\nabla\times \bm{\psi}) = & \nabla\times \bm{\phi}_{h} \quad \text{ in } \Omega,\\ 
\nabla\cdot (\nabla\times \bm{\psi}) = & 0 \quad \text{ in } \Omega,\\
(\nabla\times \bm{\psi})\cdot \bm{n} = & 0 \quad \text{ on } \partial\Omega.
\end{align*}

According to \cite[Lemma~$4.2$]{Hiptmair.R.2002a}, we have 
\begin{align}
\label{aux_prop1}
\Vert \nabla\times \bm{\psi} \Vert_{\frac{1}{2}+\delta} \leq C \Vert \nabla\times \bm{\phi}_{h}\Vert 
= C \Vert \bm{d}_{h} -  \Pi^{\div} (H_{d}\bm{d}_{h})\Vert.
\end{align}

We claim that 
\begin{align}
\label{aux_prop2}
\Vert \nabla\times \bm{\psi} - \bm{\phi}_{h}\Vert \leq C h^{\frac{1}{2}+\delta} 
\Vert \bm{d}_{h} -  \Pi^{\div} (H_{d}\bm{d}_{h})\Vert.
\end{align}
Notice that by (\ref{aux_problem1}), 
\begin{align*}
\nabla\times  \Pi^{\curl} (\nabla\times \bm{\psi}) =  \Pi^{\div}  (\nabla\times \nabla\times \bm{\psi}) 
=  \Pi^{\div}  (\nabla \times \bm{\phi}_{h}) = \nabla \times \bm{\phi}_{h}.
\end{align*}
Since $ \Pi^{\curl} (\nabla\times \bm{\psi}), \bm{\phi}_{h} \in H^{h}(\curl, \Omega)$, 
there exists $s_{h} \in H^{h}(\grad, \Omega)$ such that 
\begin{align*}
 \Pi^{\curl} (\nabla\times \bm{\psi}) - \bm{\phi}_{h} = \nabla s_{h}\quad \text{ in } \Omega. 
\end{align*}
Since $(\nabla\times \bm{\psi})\cdot \bm{n} = 0$ on $\partial\Omega$, we have 
\begin{align*}
(\nabla\times \bm{\psi},  \Pi^{\curl} (\nabla\times \bm{\psi}) - \bm{\phi}_{h}) 
= (\nabla\times \bm{\psi}, \nabla s_{h}) = 0.
\end{align*}
By the construction of $\bm{\phi}_{h}$, we have 
\begin{align*}
(\bm{\phi}_{h},  \Pi^{\curl} (\nabla\times \bm{\psi}) - \bm{\phi}_{h}) 
= (\bm{\phi}_{h}, \nabla s_{h}) = 0.
\end{align*}
Thus 
\begin{align*}
(\nabla \times \bm{\psi} - \bm{\phi}_{h},  \Pi^{\curl} (\nabla\times \bm{\psi}) - \bm{\phi}_{h}) = 0.
\end{align*}
So, by the above identify and (\ref{aux_prop1}), we have 
\begin{align*}
 \Vert \nabla \times \bm{\psi} - \bm{\phi}_{h}\Vert 
\leq & \Vert (\nabla \times \bm{\psi} - \bm{\phi}_{h})
 - ( \Pi^{\curl} (\nabla\times \bm{\psi}) - \bm{\phi}_{h}) \Vert \\ 
= & \Vert \nabla \times \bm{\psi} -   \Pi^{\curl} (\nabla\times \bm{\psi})\Vert \\ 
\leq & C h^{\frac{1}{2}+\delta} \Vert \bm{d}_{h} -  \Pi^{\div} (H_{d}\bm{d}_{h})\Vert.
\end{align*}
Therefore, the claim (\ref{aux_prop2}) is correct.

By the construction of $H_{d}$ and the fact that $\bm{\psi} \in Z_{0}$, 
\begin{align*}
(\tilde{\nabla}_{h}\times \bm{d}_{h}, \nabla\times \bm{\psi}) 
= (\nabla\times (H_{d}\bm{d}_{h}), \nabla\times \bm{\psi}) 
= (H_{d}\bm{d}_{h}, \nabla\times \nabla\times \bm{\psi}) 
= (H_{d}\bm{d}_{h}, \nabla\times \bm{\phi}_{h}).
\end{align*}
By the fact that $\bm{\phi}_{h}\in H^{h}(\curl,\Omega)$ and the above identity, 
\begin{align*}
& (\bm{d}_{h}, \nabla\times \bm{\phi}_{h}) = (\tilde{\nabla}_{h}\times \bm{d}_{h}, \bm{\phi}_{h}) \\
= & (\tilde{\nabla}_{h}\times \bm{d}_{h}, \bm{\phi}_{h} - \nabla\times \bm{\psi}) 
+ (\tilde{\nabla}_{h}\times \bm{d}_{h}, \nabla\times \bm{\psi}) \\
= & (\tilde{\nabla}_{h}\times \bm{d}_{h}, \bm{\phi}_{h} - \nabla\times \bm{\psi}) 
+ (H_{d}\bm{d}_{h}, \nabla\times \bm{\phi}_{h}).
\end{align*}
Thus we have 
\begin{align*}
(\bm{d}_{h} - H_{d}\bm{d}_{h}, \bm{d}_{h} -  \Pi^{\div}  (H_{d}\bm{d}_{h})) 
= (\bm{d}_{h} - H_{d}\bm{d}_{h}, \nabla\times \bm{\phi}_{h}) 
= (\tilde{\nabla}_{h}\times \bm{d}_{h}, \bm{\phi}_{h} - \nabla\times \bm{\psi}).
\end{align*}
So we have 
\begin{align*}
& \Vert \bm{d}_{h} - H_{d} \bm{d}_{h} \Vert^{2} \\
= & (\bm{d}_{h} - H_{d} \bm{d}_{h}, \bm{d}_{h} -  \Pi^{\div} ( H_{d} \bm{d}_{h})) 
+ (\bm{d}_{h} - H_{d} \bm{d}_{h},  \Pi^{\div} ( H_{d} \bm{d}_{h}) - H_{d} \bm{d}_{h}) \\
= & (\tilde{\nabla}_{h}\times \bm{d}_{h}, \bm{\phi}_{h} - \nabla\times \bm{\psi}) 
+ (\bm{d}_{h} - H_{d} \bm{d}_{h},  \Pi^{\div} ( H_{d} \bm{d}_{h}) - H_{d} \bm{d}_{h}) \\ 
\leq & \Vert \tilde{\nabla}_{h}\times \bm{d}_{h}\Vert \cdot \Vert \bm{\phi}_{h} - \nabla\times \bm{\psi}\Vert 
+ \Vert \bm{d}_{h} - H_{d} \bm{d}_{h}\Vert \cdot \Vert  \Pi^{\div} ( H_{d} \bm{d}_{h}) - H_{d} \bm{d}_{h}\Vert. 
\end{align*}
By applying (\ref{aux_prop2}) in the above inequality, we have 
\begin{align}
\label{H_d_prop2}
\Vert \bm{d}_{h} - H_{d} \bm{d}_{h} \Vert \leq C h^{\frac{1}{2}+\delta} 
\Vert \tilde{\nabla}_{h}\times \bm{d}_{h}\Vert.
\end{align}

Let $k_{0}$ be a positive integer such that $H_0^h(\div0, \Omega) \subset [ P_{k_{0}}(\mathcal{T}_{h}) ]^{3}$. 
We denote by $\bm{\Pi}$ the standard $\bm{L}^{2}$-orthogonal projection onto $[ P_{k_{0}}(\mathcal{T}_{h}) ]^{3}$. 
Thus $\bm{\Pi} \bm{d}_{h} = \bm{d}_{h}$. So, by the discrete inverse inequality and the fact that 
$\Vert \bm{\Pi} \bm{v} \Vert_{0,3} \leq C \Vert \bm{v}\Vert_{0,3}$ for any $\bm{v} \in [ L^{3}(\Omega) ]^{3}$, we have 
\begin{align*}
\Vert \bm{d}_{h} \Vert_{0,3} = & \Vert \bm{\Pi} \bm{d}_{h} \Vert_{0,3}
\leq \Vert \bm{\Pi} (\bm{d}_{h} - H_{d} \bm{d}_{h}) \Vert_{0,3} + \Vert \bm{\Pi} ( H_{d} \bm{d}_{h}) \Vert_{0,3} \\ 
\leq & C \big( \Vert h^{-\frac{1}{2}} \bm{\Pi} (\bm{d}_{h} - H_{d} \bm{d}_{h}) \Vert 
+ \Vert H_{d} \bm{d}_{h} \Vert_{0,3} \big) \\ 
\leq & C \big( h^{\delta} \Vert \nabla_{h}\times \bm{d}_{h}\Vert 
+ \Vert H_{d}\bm{d}_{h}\Vert_{\frac{1}{2}+\delta} \big). 
\end{align*}
Since $H_{d}\bm{d}_{h} \in H_{0}(\curl,\Omega)\cap H(\div0,\Omega)$, 
\begin{align*}
\Vert H_{d}\bm{d}_{h}\Vert_{\frac{1}{2}+\delta} \leq C \Vert \nabla\times (H_{d} \bm{d}_{h}) \Vert 
\leq C \Vert \nabla_{h}\times \bm{d}_{h}\Vert. 
\end{align*}
So, we can conclude that 
\begin{align*}
\|\bm{d}_h\|_{0,3} \le C \|\nabla_h \times \bm{d}_h\|. 
\end{align*}

This completes the proof.

 \end{proof}
\section{Variational formulations}\label{sec:vatiational-formulation}

\subsection{Nonlinear scheme}
\label{sec_scheme}
We propose the following variational form for  \eqref{eq:MHD-stationary} with boundary condition (\ref{boundary_cond}):
\begin{problem}\label{prob:continuous}
Find $(\bm{u}_h, \bm{E}_h, \bm{B}_h)\in \bm{X}_{h}$ and $ (p_h, r_h)\in \bm{Y}_{h}$, such that for any $(\bm{v}, \bm{F}, \bm{C})\in \bm{X}_{h} $ and $ (q, s)\in \bm{Y}_{h}$,
\begin{subequations}\label{fem-discretization}
\begin{equation}\label{fem-1}
 L(\bm{u}_h; \bm{u}_h, \bm{v})
  + R_{e}^{-1} (\nabla \bm{u}_h, \nabla \bm{v})  
  - \S (\bm{j}_h\times \bm{B}_h,\bm{v} ) - (p_h,\nabla\cdot \bm{v}) 
 = \langle \bm{f},\bm{v} \rangle,\\
 \end{equation}
 \begin{equation}\label{fem-2}
\S(\bm{j}_h,\bm{F}) - \alpha ( \bm{B}_h, \nabla\times \bm{F}) =0, 
\end{equation}
 \begin{equation}\label{fem-3}
 \alpha (\nabla\times \bm{E}_h, \bm{C}) +(r_h, \nabla\cdot \bm{C})= 0, 
 \end{equation}
 \begin{equation}\label{fem-4}
 -(\nabla\cdot \bm{u}_h, q) =0,
 \end{equation}
 \begin{equation}\label{fem-5}
 (\nabla\cdot \bm{B}_h, s) = 0, 
 \end{equation}
\end{subequations}
where $\bm{j}_h$ is given by  Ohm's law: 
$\bm{j}_h = \bm{E}_h + \bm{u}_h \times \bm{B}_h$. Here $r_h$ is the Lagrange multiplier which approximates $r = 0$.
\end{problem} 

We verify some properties of the variational form Problem \ref{prob:continuous}:
\begin{theorem}\label{stability_fem}
Any solution for  Problem \ref{prob:continuous} satisfies
\begin{enumerate}
\item
magnetic Gauss's law:
$$
\nabla\cdot \bm{B}_h=0.
$$
\item  Lagrange multiplier  $r_h=0$, and the strong form
$$
\nabla\times \bm{E}_h =0,
$$
\item energy estimates:
\begin{align}\label{energy-1}
{R_{e}^{-1}}\|\nabla\bm{u}_h\|^{2}+\S\|\bm{j}_h\|^{2} &= \langle \bm{f}, \bm{u}_h\rangle, \\
\label{energy-2}
\frac{1}{2}R_{e}^{-1}\|\nabla\bm{u}_h\|^{2}+\S\|\bm{j}_h\|^{2} &\leq \frac{R_{e}}{2}\|\bm{f}\|_{-1}^{2}, \\
\label{energy-3}
R_m^{-1} \|\nabla_h \times \bm{B}_h\| &\le  \|\bm{j}_h\|, \\
\label{energy-4}
\Vert \nabla_h \times \bm{B}_h \Vert & \le C R_{e}^{\frac{1}{2}} R_{m} \S^{-\frac{1}{2}} \Vert \bm{f} \Vert_{-1}, \\
\label{energy-5}
\|\bm{E}_h\| &\le C R_e^{\frac32} R_m \S^{-\frac{1}{2}} \|\bm{f}\|^2_{-1}.
\end{align}
\end{enumerate}
\end{theorem}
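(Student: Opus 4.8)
The plan is to establish the three groups of properties in order, using the solution components themselves (together with structure-preserving test functions) as test functions, and reserving the $L^3$ embedding of Theorem \ref{L3estimate} for the final estimate on $\bm{E}_h$.

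First I would dispose of items (1) and (2). Since $\div$ maps $H^h_0(\div,\Omega)$ onto $L^{2,h}_0(\Omega)$ in the discrete de Rham sequence, $\nabla\cdot\bm{B}_h$ lies in $L^{2,h}_0(\Omega)$; choosing $s=\nabla\cdot\bm{B}_h$ in \eqref{fem-5} forces $\nabla\cdot\bm{B}_h=0$, so in particular $\bm{B}_h\in H^h_0(\div0,\Omega)$, which I will need below. For item (2), because $\bm{E}_h\in H^h_0(\curl,\Omega)$ we have $\nabla\times\bm{E}_h\in H^h_0(\div0,\Omega)$; taking $\bm{C}=\nabla\times\bm{E}_h$ in \eqref{fem-3} annihilates the multiplier term (its divergence vanishes) and leaves $\alpha\|\nabla\times\bm{E}_h\|^2=0$, hence $\nabla\times\bm{E}_h=0$. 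Equation \eqref{fem-3} then reduces to $(r_h,\nabla\cdot\bm{C})=0$ for all $\bm{C}\in H^h_0(\div,\Omega)$, and the surjectivity of $\div$ onto $L^{2,h}_0(\Omega)$ gives $r_h=0$.

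Next I turn to the energy identities. For \eqref{energy-1} I would test \eqref{fem-1}--\eqref{fem-5} with $(\bm{v},\bm{F},\bm{C},q,s)=(\bm{u}_h,\bm{E}_h,\bm{B}_h,p_h,r_h)$. The convection term drops since $L(\bm{u}_h;\bm{u}_h,\bm{u}_h)=0$ by the antisymmetry built into $L$, and the pressure term drops by \eqref{fem-4}. The Lorentz term is rewritten via the scalar triple product as $(\bm{j}_h\times\bm{B}_h,\bm{u}_h)=(\bm{j}_h,\bm{B}_h\times\bm{u}_h)$, and Ohm's law gives $\bm{B}_h\times\bm{u}_h=\bm{E}_h-\bm{j}_h$; since \eqref{fem-2} with $\bm{F}=\bm{E}_h$ together with $\nabla\times\bm{E}_h=0$ forces $(\bm{j}_h,\bm{E}_h)=0$, this term equals $-\|\bm{j}_h\|^2$, producing \eqref{energy-1}. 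Estimate \eqref{energy-2} then follows by bounding $\langle\bm{f},\bm{u}_h\rangle\le\|\bm{f}\|_{-1}\|\nabla\bm{u}_h\|$ and applying Young's inequality. For \eqref{energy-3} I read \eqref{fem-2} through the discrete curl \eqref{dis_curl}: it states $(\bm{j}_h,\bm{F})=R_m^{-1}(\nabla_h\times\bm{B}_h,\bm{F})$ for all $\bm{F}\in H^h_0(\curl,\Omega)$, i.e. $R_m^{-1}\nabla_h\times\bm{B}_h=\mathbb{P}\bm{j}_h$, whence $R_m^{-1}\|\nabla_h\times\bm{B}_h\|=\|\mathbb{P}\bm{j}_h\|\le\|\bm{j}_h\|$. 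Combining \eqref{energy-2} and \eqref{energy-3} immediately yields \eqref{energy-4}.

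The main obstacle is \eqref{energy-5}, and this is where the $L^3$ estimate is essential. Using Ohm's law I would write $\|\bm{E}_h\|\le\|\bm{j}_h\|+\|\bm{u}_h\times\bm{B}_h\|$ and control the nonlinear factor by H\"older's inequality, $\|\bm{u}_h\times\bm{B}_h\|\le\|\bm{u}_h\|_{0,6}\|\bm{B}_h\|_{0,3}$. Poincar\'{e}'s inequality \eqref{def:C1} bounds $\|\bm{u}_h\|_{0,6}\le C\|\nabla\bm{u}_h\|$, and---crucially---since $\bm{B}_h\in H^h_0(\div0,\Omega)$ by item (1), Theorem \ref{L3estimate} bounds $\|\bm{B}_h\|_{0,3}\le C\|\nabla_h\times\bm{B}_h\|$. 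Feeding in $\|\nabla\bm{u}_h\|\le R_e\|\bm{f}\|_{-1}$ from \eqref{energy-2} and the bound \eqref{energy-4} on $\|\nabla_h\times\bm{B}_h\|$ yields the quadratic term $C R_{e}^{3/2}R_m\S^{-\frac{1}{2}}\|\bm{f}\|_{-1}^2$, which dominates the linear contribution from $\|\bm{j}_h\|$, giving \eqref{energy-5}. The delicate point throughout is that the chosen test functions must be admissible members of the discrete spaces, and that $\bm{B}_h$ is genuinely divergence-free, since that is precisely the hypothesis under which Theorem \ref{L3estimate} applies.
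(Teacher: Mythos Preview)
Your proposal follows essentially the same route as the paper's own proof: the same test functions for items (1)--(2), the same diagonal test $(\bm{v},\bm{F},\bm{C},q)=(\bm{u}_h,\bm{E}_h,\bm{B}_h,p_h)$ for \eqref{energy-1}, Young's inequality for \eqref{energy-2}, reading \eqref{fem-2} through the discrete curl for \eqref{energy-3}, and the $L^3$ embedding of Theorem~\ref{L3estimate} together with \eqref{def:C1} for \eqref{energy-5}. The only substantive deviation, and the only gap, is in your final step.

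For \eqref{energy-5} you invoke the triangle inequality $\|\bm{E}_h\|\le\|\bm{j}_h\|+\|\bm{u}_h\times\bm{B}_h\|$ and then assert that the quadratic contribution $C R_e^{3/2}R_m\S^{-1/2}\|\bm{f}\|_{-1}^{2}$ ``dominates the linear contribution from $\|\bm{j}_h\|$''. This is false in general: $\|\bm{j}_h\|$ is of order $R_e^{1/2}\S^{-1/2}\|\bm{f}\|_{-1}$, and for small $\|\bm{f}\|_{-1}$ a linear term is \emph{larger} than a quadratic one, not smaller. Your argument therefore yields only
\[
\|\bm{E}_h\|\le C\bigl(R_e^{1/2}\S^{-1/2}\|\bm{f}\|_{-1}+R_e^{3/2}R_m\S^{-1/2}\|\bm{f}\|_{-1}^{2}\bigr),
\]
which does not imply the purely quadratic bound stated in \eqref{energy-5}. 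The remedy is already in your hands: you have established $(\bm{j}_h,\bm{E}_h)=0$, so
\[
\|\bm{E}_h\|^{2}=(\bm{E}_h-\bm{j}_h,\bm{E}_h)=-(\bm{u}_h\times\bm{B}_h,\bm{E}_h)\le\|\bm{u}_h\times\bm{B}_h\|\,\|\bm{E}_h\|,
\]
giving $\|\bm{E}_h\|\le\|\bm{u}_h\times\bm{B}_h\|$ with no linear residue. Your H\"older/Poincar\'e/Theorem~\ref{L3estimate} chain then delivers \eqref{energy-5} exactly. This is precisely the route the paper takes.
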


\begin{proof} The magnetic Gauss's law is a direct consequence of \eqref{fem-5}.

Taking $\bm{C}=\nabla\times \bm{E}_h$ in  \eqref{fem-3}, we have
$\nabla\times \bm{E}_h=0$. Therefore \eqref{fem-3} reduces to
$$
(r_h, \nabla\cdot \bm{C})=0, \quad\forall\,  \bm{C}\in {H_{0}^{h}(\mathrm{div}, \Omega)}.
$$
Since ${L_{0,h}^2(\Omega)}= \nabla\cdot {H_{0}^{h}(\mathrm{div}, \Omega)}$, we get $r_h=0$. 

To obtain the first energy estimate, we take $\bm{v}=\bm{u}_h$, $\bm{F}=\bm{E}_h, \bm{C}=\bm{B}_h$ and $q = p_h$ in \eqref{fem-1} - \eqref{fem-4} and add the equations together.  The second energy estimate follows from the Young's inequality
$$
\langle \bm{f}, \bm{u}_{h} \rangle \leq \|\bm{f}\|_{-1}\|\nabla\bm{u}_{h}\|
\leq
\frac{1}{2R_{e}}\|\nabla\bm{u}_{h}\|^{2}+\frac{1}{2}R_{e}\|\bm{f}\|_{-1}^{2}.
$$
 
Taking $\bm{F} = \nabla_h \times \bm{B}_h$ in \eqref{fem-2} we have
\[
R^{-1}_m \|\nabla_h \times \bm{B}_h\|^2 = R^{-1}_m (\bm{j}_h, \nabla_h \times \bm{B}_h) \le \|\bm{j}_h\| \|\nabla_h \times \bm{B}_h\|,
\]
which implies \eqref{energy-3}.  Obviously, the estimate (\ref{energy-4}) is due to estimates (\ref{energy-2}) 
and (\ref{energy-3}).

Next we take $\bm{F} = \bm{E}_h$ in \eqref{fem-2} and by the definition of $\bm{j}_h$ we have
\[
(\bm{E}_h + \bm{u}_h \times \bm{B}_h, \bm{E}_h) - R_m^{-1} (\bm{B}_h, \nabla \times \bm{E}_h) = 0.
\]
By the fact that $\nabla \times \bm{E}_h = 0$ and the generalized H\"{o}lder's inequality we have
\begin{align}\nonumber
\|\bm{E}_h\|^2 &=  - (\bm{u}_h \times \bm{B}_h, \bm{E}_h) \le \|\bm{u}_h\|_{0,6} \|\bm{B}_h\|_{0,3} \|\bm{E}_h\| \\
\nonumber
& \le C\|\nabla \bm{u}_h\| \|\nabla_h \times \bm{B}_h\| \|\bm{E}_h\|,
\end{align}
the last step is due to the Sobolev embedding results \eqref{def:C1} and Theorem \ref{L3estimate}. The estimate 
\eqref{energy-5} can be obtained by combining the above estimate with \eqref{energy-2} and \eqref{energy-4}. 
This completes the proof.
\end{proof}

\begin{remark}\label{stab_const}
From the above result we can see that the energy norm of the unknowns $\bm{u}_h, \bm{B}_h, \bm{E}_h$ solely depends on $\|\bm{f}\|_{-1}$ and the physical constants $R_m, R_e, \S$. In addition, it is easy to verify that the exact solution satisfies the same stability estimate
\begin{align}
\label{stab_exact_solution}
\Vert \nabla \bm {u}\Vert & \leq R_{e} \Vert \bm{f} \Vert_{-1},\\
\nonumber 
\Vert \bm{B}\Vert_{0,3} + \Vert \nabla\times \bm{B} \Vert 
& \leq C R_{e}^{\frac{1}{2}}R_{m}\S^{-\frac{1}{2}}\Vert \bm{f} \Vert_{-1}, \\
\nonumber 
\Vert \bm{E} \Vert & \leq C R_e^{\frac32} R_m \S^{-\frac{1}{2}} \|\bm{f}\|^2_{-1}.
\end{align}
\end{remark}


\begin{theorem}
Problem \ref{prob:continuous} is well-posed.
\end{theorem}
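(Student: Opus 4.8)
The plan is to treat Problem \ref{prob:continuous} as a nonlinear saddle-point system of Brezzi type and to establish well-posedness (existence, uniqueness under a data-smallness condition, and continuous dependence) in three stages: verify the inf-sup conditions governing the multipliers $p_h$ and $r_h$ to reduce to the divergence-constrained kernel, prove a coercive a priori bound on that kernel, and then invoke the standard solvability theory for nonlinear saddle-point problems (c.f. \cite{Girault.V;Raviart.P.1986a, Boffi.D;Brezzi.F;Fortin.M.2013a}). First I would record the two constraint bilinear forms $(\bm{v},q)\mapsto-(\nabla\cdot\bm{v},q)$ and $(\bm{C},s)\mapsto(\nabla\cdot\bm{C},s)$. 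The pair $(\bm{V}_h,Q_h)$ is inf-sup stable by assumption (e.g. Taylor--Hood), and the second form is inf-sup stable because $\div$ maps $H_0^h(\div,\Omega)$ onto $L_0^{2,h}(\Omega)$ in the de Rham sequence. These two conditions let me recover $(p_h,r_h)\in\bm{Y}_h$ from \eqref{fem-1} and \eqref{fem-3}, and reduce the system to finding $(\bm{u}_h,\bm{E}_h,\bm{B}_h)\in\bm{X}_h^{00}$ satisfying \eqref{fem-1}--\eqref{fem-2} for all test functions in $\bm{X}_h^{00}$, with $\bm{u}_h\in\bm{V}_h^0$ and $\nabla\cdot\bm{B}_h=0$ built in.

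Following the reduced-system idea, I would eliminate $\bm{E}_h$: equation \eqref{fem-2} combined with the discrete curl \eqref{dis_curl} determines $\bm{E}_h=\mathbb{P}\big(R_m^{-1}\nabla_h\times\bm{B}_h-\bm{u}_h\times\bm{B}_h\big)$, leaving a nonlinear problem on $\bm{W}_h^{00}=\bm{V}_h^0\times H_0^h(\div0,\Omega)$ for $(\bm{u}_h,\bm{B}_h)$. The decisive step is coercivity: testing with the solution itself reproduces the energy identity \eqref{energy-1}, in which the Lorentz coupling $-\S(\bm{j}_h\times\bm{B}_h,\bm{u}_h)$ and the induction coupling cancel exactly through the triple-product identity, so the nonlinear form controls $R_e^{-1}\|\nabla\bm{u}_h\|^2+\S\|\bm{j}_h\|^2$. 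Combining this with \eqref{energy-3}, the constraint $\nabla\cdot\bm{B}_h=0$, and the embedding $\|\bm{B}_h\|\le C\|\bm{B}_h\|_{0,3}\le C\|\nabla_h\times\bm{B}_h\|$ from Theorem \ref{L3estimate}, I obtain coercivity in the full norm $\|(\bm{u}_h,\bm{B}_h)\|_{\bm{W}}$. This reproduces the a priori bounds of Theorem \ref{stability_fem}, so every solution lies in a fixed ball whose radius depends only on $\|\bm{f}\|_{-1}$ and the physical constants $R_e,R_m,\S$, which is the unconditional stability.

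Existence then follows from the solvability theory for coercive nonlinear saddle-point problems: the reduced form is continuous and coercive on $\bm{W}_h^{00}$ and its nonlinear part is (weakly) continuous, so the associated solution map admits a fixed point confined to the a priori ball; one reconstructs $\bm{E}_h$ from the elimination formula and $(p_h,r_h)$ from the inf-sup conditions to obtain a full solution of Problem \ref{prob:continuous}, with $r_h=0$ and $\nabla\times\bm{E}_h=0$ as in Theorem \ref{stability_fem}. The role of Theorem \ref{L3estimate} is essential precisely here: because $\bm{u}_h\times\bm{B}_h$ need not lie in $H_0^h(\curl,\Omega)$, the only route to bounding the nonlinear terms $(\bm{j}_h\times\bm{B}_h,\bm{v})$ and $(\bm{u}_h\times\bm{B}_h,\bm{F})$ is through the generalized H\"older inequality together with $\|\bm{u}_h\|_{0,6}\le C\|\nabla\bm{u}_h\|$ from \eqref{def:C1} and $\|\bm{B}_h\|_{0,3}\le C\|\nabla_h\times\bm{B}_h\|$ from Theorem \ref{L3estimate}, which simultaneously yield continuity of the form and the continuity needed for the fixed-point argument.

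For uniqueness and continuous dependence I would take two solutions, subtract the equations, and test the difference with itself. The coercive part again produces $R_e^{-1}\|\nabla(\bm{u}_h^1-\bm{u}_h^2)\|^2+\S\|\bm{j}_h^1-\bm{j}_h^2\|^2$, while the differences of the trilinear convection $L(\cdot;\cdot,\cdot)$ (handled by its skew-symmetry) and of the quadratic magnetic couplings are estimated, via the same $L^6$--$L^3$ bounds and the a priori estimates of Theorem \ref{stability_fem} and Remark \ref{stab_const}, by a constant multiple of $\|\bm{f}\|_{-1}$ times the squared difference norm. Under a smallness condition on $\|\bm{f}\|_{-1}$ relative to $R_e,R_m,\S$ and the embedding constants, this perturbation is absorbed by the coercive term, forcing the difference to vanish; the same inequality gives Lipschitz continuous dependence on $\bm{f}$. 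The main obstacle is this coercivity/closing step: one must exploit the exact cancellation of the magnetic coupling and then control the residual nonlinearity entirely through the $L^3$ estimate of Theorem \ref{L3estimate}, since the discrete velocity--magnetic product leaves the curl-conforming space; making this bound sharp enough is what both drives the existence argument and fixes the smallness threshold for uniqueness.
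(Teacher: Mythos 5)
Your proposal follows essentially the same route as the paper: eliminate $\bm{E}_h$ via \eqref{fem-2} and the discrete curl (your formula $\bm{E}_h=\mathbb{P}\bigl(R_m^{-1}\nabla_h\times\bm{B}_h-\bm{u}_h\times\bm{B}_h\bigr)$ agrees with the paper's identity since $\nabla_h\times\bm{B}_h$ already lies in $H_0^h(\mathrm{curl},\Omega)$), reduce to a nonlinear Brezzi system on $\bm{W}_h\times\bm{Y}_h$, and verify boundedness via the $L^3$ estimate of Theorem \ref{L3estimate}, the inf-sup conditions for the two divergence constraints, and coercivity on $\bm{W}_h^{00}$ via the energy identity together with \eqref{energy-3} and the discrete Poincar\'e inequality, concluding existence by the fixed-point theory and uniqueness under data smallness. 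This matches the paper's argument (the reduction \eqref{BE-stationary}, Problem \ref{prob:mixed-picard}, and Lemmas \ref{lem:boudnedness-stationary-BE}--\ref{lem:a-BE-stationary}) in both structure and substance.
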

In the remaining part of this section we prove the well-posedness of Problem \ref{prob:continuous}. We will first recast Problem \ref{prob:continuous} into an equivalent form (\eqref{BE-stationary} and Problem \ref{prob:mixed-picard}) where $\bm{E}$ is formally eliminated. Then we demonstrate that this equivalent form is well-posed using the Brezzi theory and the key $L^{3}$ estimate (Theorem \ref{thm:wellposed-stationary}). Then we can conclude with the well-posedness of Problem \ref{prob:continuous}.

Using \eqref{fem-2},  we have 
$$
\bm{E}_{h}+\mathbb{P}(\bm{u}_{h}\times \bm{B}_{h})=R_{m}^{-1}\nabla_{h}\times \bm{B}.
$$
Now  the Lorentz force has an equivalent form
\begin{align}\nonumber
-\left (\bm{j}_{h}\times \bm{B}_{h},\bm{v} \right )&=\left (\bm{E}_{h}+\mathbb{P}\left (\bm{u}_{h}\times \bm{B}_{h}\right ),  \bm{v}\times \bm{B}_{h}\right )+\left ((I-\mathbb{P})(\bm{u}_{h}\times \bm{B}_{h}),  \bm{v}\times \bm{B}_{h}\right )\\&\label{penalty-curl}
=R_{m}^{-1}\left (\nabla_{h}\times \bm{B}_{h},  \bm{v}\times \bm{B}_{h}\right )+\left((I-\mathbb{P})(\bm{u}_{h}\times \bm{B}_{h}), (I-\mathbb{P}) (\bm{v}\times \bm{B}_{h})\right).
\end{align}
Even though the velocity field $\bm{u}_{h}$ is smooth, the $H(\div)$ conformality of the magnetic field $\bm{B}_{h}$ cannot guarantee  $\bm{u}_{h}\times \bm{B}_{h}\in H(\curl, \Omega)$. 
 The term $(I-\mathbb{P})(\bm{u}_{h}\times \bm{B}_{h})$ on the right hand side of  \eqref{penalty-curl}  measures the deviation of $\bm{u}_{h}\times \bm{B}_{h}$ from   $H^{h}(\curl)$ and $\left((I-\mathbb{P})(\bm{u}_{h}\times \bm{B}_{h}), (I-\mathbb{P}) (\bm{u}_{h}\times \bm{B}_{h})\right)$ can be regarded as a penalty term.

Therefore \eqref{fem-discretization} is equivalent to the following problem: Find $\left ( \bm{u}_{h}, \bm{B}_{h}\right )\in \bm{W}_{h}$ and $(p_{h}, r_{h})\in \bm{Y}_{h}$ such that for any $\left ( \bm{v}_{h}, \bm{C}_{h}\right )\in \bm{W}_{h}$ and $(q_{h}, s_{h})\in \bm{Y}_{h}$,
\begin{equation}\label{BE-stationary}
\left \{
\begin{aligned}
&  L(\bm{w}_{h}; \bm{u}_{h}, \bm{v}_{h})+ R_{e}^{-1} (\nabla \bm{u}_{h}, \nabla \bm{v}_{h})
- {\alpha}(\nabla_{h}\times \bm{B}_{h} ,\bm{B}_{h}\times\bm{v}_{h} )\\&
\quad\quad+ \S\left((I-\mathbb{P})(\bm{u}_{h}\times \bm{B}_{h}), (I-\mathbb{P}) (\bm{v}_{h}\times \bm{B}_{h})\right)
- (p_{h},\nabla\cdot \bm{v}_{h}) = (\bm{f},\bm{v}_{h}),\\&
-{\alpha}(\bm{u}_{h}\times \bm{B}_{h}, \nabla_{h}\times \bm{C}_{h})+\S R_{m}^{-2}(\nabla_{h}\times \bm{B}_{h}, \nabla_{h}\times \bm{C}_{h})+(r_{h}, \nabla\cdot\bm{C}_{h})=0,\\&
(\nabla\cdot\bm{u}_{h}, q_{h})=0,\\&
 (\nabla\cdot \bm{B}_{h}, s_{h}) = 0.
\end{aligned}  
\right .
\end{equation}

We note that the reduced system \eqref{BE-stationary} has a similar form compared with the work by Gunzburger \cite{gunzburger1991existence} and Sch\"otzau \cite{Schotzau.D.2004a}. However, this similarity is only formal. The magnetic field $\bm{B}$ is discretized as 0-forms with the Lagrange finite elements in \cite{gunzburger1991existence} and treated as 1-forms with the N\'{e}d\'{e}lec elements in \cite{Schotzau.D.2004a}.  In both approaches \cite{gunzburger1991existence, Schotzau.D.2004a}, 
the $\curl$ operator can be evaluated on $\bm{B}$ in a straightforward way. In contrast, $\bm{B}$ is discretized as a 2-form in \eqref{BE-stationary}. As a result, the discrete curl operator $\nabla_{h}\times $ is globally defined by \eqref{dis_curl}, which leads to a new mixed formulation. This also makes the analysis essentially different from \cite{gunzburger1991existence} or  \cite{Schotzau.D.2004a}. Compared with the $\bm{B}$-$\bm{j}$ based scheme in \cite{hu2015structure}, a quadratic term 
$$
\S\left((I-\mathbb{P})(\bm{u}_{h}\times \bm{B}_{h}), (I-\mathbb{P}) (\bm{v}_{h}\times \bm{B}_{h})\right),
$$
comes into the reduced variational formulation \eqref{BE-stationary}.  This is due to the different choice of variables. 

Denote $\bm{\psi}_{h}=(\bm{w}_{h}, \bm{G}_{h})$, $\bm{\xi}_{h}=(\bm{u}_{h},  \bm{B}_{h})$, $\bm{\eta}_{h}=(\bm{v}_{h}, \bm{C}_{h})$ and $\bm{x}_{h}=(p_{h},r_{h})$, $\bm{y}_{h}=(q_{h}, s_{h})$.
Define
\begin{align*}
\bm{a}_{s}\left (\bm{\psi}_{h}; \bm{\xi}_{h},  \bm{\eta}_{h}\right )
 := & {1 \over 2}\left [\left (( \bm{w}_{h}\cdot \nabla) \bm{u}_{h},\bm{v}_{h}\right )
- \left ( (\bm{w}_{h}\cdot \nabla)\bm{v}_{h} , \bm{u}_{h}\right )\right ]+ R_{e}^{-1}\left  (\nabla \bm{u}_{h}, \nabla \bm{v}_{h}\right )\\&
- {\alpha}\left (\nabla_{h}\times \bm{B}_{h} ,\bm{G}_{h}\times\bm{v}_{h} \right )+ \S\left((I-\mathbb{P})(\bm{u}_{h}\times \bm{G}_{h}), (I-\mathbb{P}) (\bm{v}_{h}\times \bm{G}_{h})\right)\\&
-{\alpha}\left (\bm{u}_{h}\times \bm{G}_{h}, \nabla_{h}\times \bm{C}_{h}\right )+\S R_{m}^{-2}\left (\nabla_{h}\times \bm{B}_{h}, \nabla_{h}\times \bm{C}_{h}\right ),
\end{align*}
and
$$
\bm{b}_{s}(\bm{\xi}_{h}, \bm{y}_{h}):= -(\nabla\cdot \bm{u}_{h}, q_{h})+(\nabla\cdot \bm{B}_{h}, s_{h}).
$$

Equation \eqref{BE-stationary}  can be recast into a mixed system:
\begin{problem}\label{prob:mixed-picard}  
Given $\bm{\theta}\in \bm{W}_{h}^{\ast} $ and $\bm{\psi}\in \bm{Y}_{h}^{\ast}$, find $(\bm{\xi}_{h},\bm{x}_{h})\in
  \bm{W}_{h} \times \bm{Y}_{h}$, such that
\begin{align}\label{brezzip01}
\begin{cases}
&\bm{a}_{s}(\bm{\xi}_{h}; \bm{\xi}_{h},  \bm{\eta}_{h})+ \bm{b}_{s}(\bm{\eta}_{h},  \bm{x}_{h})=\langle \bm{\theta}, \bm{\eta}_{h}  \rangle ,
\quad \forall~       
 \bm{\eta}_{h} \in \bm{W}_{h}, \\
&\bm{b}_{s}(\bm{\xi}_{h}, \bm{y}_{h})=\langle \bm{\psi}, \bm{y}_{h} \rangle, \quad\forall \bm{y}_{h} \in \bm{Y}_{h}.
\end{cases}
\end{align}
\end{problem}

\begin{theorem} \label{thm:wellposed-stationary}
Problem \ref{prob:mixed-picard} is well-posed.
\end{theorem}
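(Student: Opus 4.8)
The plan is to apply the Brezzi theory for nonlinear, Navier--Stokes type saddle point problems to Problem~\ref{prob:mixed-picard}, which requires two ingredients: an inf-sup condition for the constraint form $\bm{b}_{s}$, and coercivity of the diagonal of $\bm{a}_{s}$ on the kernel $\bm{W}_{h}^{00}$. The inf-sup condition is the easy part, since $\bm{b}_{s}(\bm{\xi}_{h},\bm{y}_{h})=-(\nabla\cdot\bm{u}_{h},q_{h})+(\nabla\cdot\bm{B}_{h},s_{h})$ decouples into two independent blocks: the velocity-pressure pair $(\bm{V}_{h},Q_{h})$ satisfies the standard Stokes inf-sup condition (e.g.\ for Taylor--Hood elements), while $\div:H_{0}^{h}(\div,\Omega)\to L_{0}^{2,h}(\Omega)$ is surjective by the exactness of the discrete de Rham sequence in Figure~\ref{exact-sequence}. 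Hence $\bm{b}_{s}$ is inf-sup stable, and it suffices to solve the nonlinear problem on $\bm{W}_{h}^{00}$ and then recover the multipliers $(p_{h},r_{h})$.

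First I would compute the diagonal $\bm{a}_{s}(\bm{\eta}_{h};\bm{\eta}_{h},\bm{\eta}_{h})$ for $\bm{\eta}_{h}=(\bm{v}_{h},\bm{C}_{h})$. The skew-symmetric convection term vanishes identically, and since $\alpha=\S R_{m}^{-1}$ and $\bm{C}_{h}\times\bm{v}_{h}=-\bm{v}_{h}\times\bm{C}_{h}$, the two cross-coupling terms $-\alpha(\nabla_{h}\times\bm{C}_{h},\bm{C}_{h}\times\bm{v}_{h})$ and $-\alpha(\bm{v}_{h}\times\bm{C}_{h},\nabla_{h}\times\bm{C}_{h})$ cancel. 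This leaves
\begin{align*}
\bm{a}_{s}(\bm{\eta}_{h};\bm{\eta}_{h},\bm{\eta}_{h})
= R_{e}^{-1}\|\nabla\bm{v}_{h}\|^{2}
+\S\|(I-\mathbb{P})(\bm{v}_{h}\times\bm{C}_{h})\|^{2}
+\S R_{m}^{-2}\|\nabla_{h}\times\bm{C}_{h}\|^{2},
\end{align*}
which is manifestly nonnegative. To upgrade this to coercivity in the $\bm{W}$-norm on $\bm{W}_{h}^{00}$, I would use Poincar\'e's inequality \eqref{def:C1} to control $\|\bm{v}_{h}\|_{1}$ by $\|\nabla\bm{v}_{h}\|$; on the kernel $\nabla\cdot\bm{C}_{h}=0$, so the divergence part of $\|\bm{C}_{h}\|_{d}$ drops out; and $\|\nabla_{h}\times\bm{C}_{h}\|$ is already present. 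The only missing piece is $\|\bm{C}_{h}\|$, which is exactly where Theorem~\ref{L3estimate} enters: since $\bm{C}_{h}\in H_{0}^{h}(\div0,\Omega)$ we have $\|\bm{C}_{h}\|\le C\|\bm{C}_{h}\|_{0,3}\le C\|\nabla_{h}\times\bm{C}_{h}\|$ (using $|\Omega|<\infty$ for the first inequality). Together these give $\bm{a}_{s}(\bm{\eta}_{h};\bm{\eta}_{h},\bm{\eta}_{h})\ge\gamma\|\bm{\eta}_{h}\|_{\bm{W}}^{2}$ for all $\bm{\eta}_{h}\in\bm{W}_{h}^{00}$.

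With coercivity in hand, existence follows from the finite-dimensional fixed point argument (Girault--Raviart \cite{Girault.V;Raviart.P.1986a}, Temam \cite{temam1984navier}): one splits off a particular solution of the constraint via the inf-sup condition, and then solves the nonlinear equation on $\bm{W}_{h}^{00}$ by Brouwer's theorem, for which coercivity plus boundedness of $\bm{a}_{s}$ suffices (weak sequential continuity is automatic in finite dimensions). The multipliers are then recovered from the first line of \eqref{brezzip01} using the inf-sup condition. For uniqueness I would assume the data $\bm{\theta}$ (equivalently $\|\bm{f}\|_{-1}$) is small: subtracting two solutions, testing with their difference, and playing coercivity against the bounds on the nonlinear terms gives that the difference vanishes once the data falls below a threshold determined by $\gamma$ and the stability constants of Theorem~\ref{stability_fem}.

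The main obstacle, and the place where the structure-specific difficulty concentrates, is bounding the $\bm{B}$-dependent terms. Because $\bm{B}_{h}$ is only $H(\div)$-conforming, the product $\bm{u}_{h}\times\bm{B}_{h}$ need not lie in $H(\curl,\Omega)$, so the coupling and penalty terms cannot be estimated by naive $H(\curl)$ bounds. Instead, every such term---the cross-coupling $\alpha(\nabla_{h}\times\bm{B}_{h},\bm{G}_{h}\times\bm{v}_{h})$, the penalty $\S\|(I-\mathbb{P})(\bm{u}_{h}\times\bm{G}_{h})\|^{2}$, and the convection contribution---must be controlled through the generalized H\"older inequality together with the embedding $\|\bm{v}_{h}\|_{0,6}\le C\|\nabla\bm{v}_{h}\|$ from \eqref{def:C1} and the discrete $L^{3}$ estimate $\|\bm{C}_{h}\|_{0,3}\le C\|\nabla_{h}\times\bm{C}_{h}\|$ of Theorem~\ref{L3estimate}. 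This $L^{3}$ estimate is therefore the linchpin both for the coercivity on the kernel and for the boundedness required by the fixed point theorem.
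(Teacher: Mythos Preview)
Your proposal is correct and follows essentially the same route as the paper: verify boundedness of $\bm{a}_{s}$ (via H\"older, \eqref{def:C1}, and Theorem~\ref{L3estimate}), the inf-sup condition for $\bm{b}_{s}$ (decoupled Stokes inf-sup plus surjectivity of $\div$ on the discrete de Rham sequence), and coercivity of $\bm{a}_{s}$ on $\bm{W}_{h}^{00}$ via the diagonal computation and the discrete Poincar\'e inequality, then conclude by the Brezzi/Girault--Raviart fixed point argument. The only cosmetic difference is that the paper invokes the discrete Poincar\'e inequality $\|\bm{B}_{h}\|\lesssim\|\nabla_{h}\times\bm{B}_{h}\|$ directly, whereas you route it through the $L^{3}$ estimate of Theorem~\ref{L3estimate}; these are equivalent here.
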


We prove the existence of solutions to the discrete variational form. To use the Brezzi theory and the fixed point theorem (see \cite{Girault.V;Raviart.P.1986a}), we need to show
\begin{itemize}
\item
each term in \eqref{brezzip01} is bounded,
\item
the inf-sup condition for $\bm{b}_{s}$,
\item
coercivity of $\bm{a}_{s}$ on $\bm{W}_{h}^{00}$.
\end{itemize}
We establish these conditions in the subsequent lemmas.

The boundedness of the variational form is a direct consequence of the key $L^{3}$ estimate.
\begin{lemma}\label{lem:boudnedness-stationary-BE}
The trilinear form $\bm{a}_{s}(\cdot;\cdot, \cdot)$ and the bilinear form $\bm{b}_{s}(\cdot, \cdot)$ are bounded, i.e. there exists a positive constant    $C$ such that
$$
\bm{a}_{s}(\bm{\psi}_{h}; \bm{\xi}_{h}, \bm{\eta}_{h})\leq C\|\bm{\psi}_{h}\|_{\bm{W}}\|\bm{\xi}_{h}\|_{\bm{W}}\|\bm{\eta}_{h}\|_{\bm{W}}, \quad \forall \bm{\psi}_{h}, \bm{\xi}_{h}, \bm{\eta}_{h}\in \bm{W}_{h}, 
$$
and
$$
\bm{b}_{s}(\bm{\eta}_{h}, \bm{y}_{h})\leq C\|\bm{\eta}_{h}\|_{\bm{W}}\|\bm{y}_{h}\|_{\bm{Y}}, \quad \forall \bm{\eta}_{h}\in \bm{W}_{h}, \bm{y}_{h}\in \bm{Y}_{h}.
$$
\end{lemma}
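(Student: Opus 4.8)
The plan is to bound the six terms of $\bm{a}_s$ and the form $\bm{b}_s$ separately, reducing everything to three ingredients: the Cauchy--Schwarz inequality, the Sobolev embedding \eqref{def:C1} for the $H^1_0$ velocity fields, and the discrete $L^3$ estimate of Theorem \ref{L3estimate} for the $H(\div)$ magnetic fields. The bilinear constraint $\bm{b}_s(\bm{\eta}_h,\bm{y}_h)=-(\nabla\cdot\bm{v}_h,q_h)+(\nabla\cdot\bm{C}_h,s_h)$ is immediate from Cauchy--Schwarz, since $\|\nabla\cdot\bm{v}_h\|\le\|\bm{v}_h\|_1$ and $\|\nabla\cdot\bm{C}_h\|\le\|\bm{C}_h\|_d$. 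Likewise the two purely bilinear terms of $\bm{a}_s$, the viscous term $R_e^{-1}(\nabla\bm{u}_h,\nabla\bm{v}_h)$ and the resistive term $\S R_m^{-2}(\nabla_h\times\bm{B}_h,\nabla_h\times\bm{C}_h)$, are controlled directly by Cauchy--Schwarz together with the definitions of $\|\cdot\|_1$ and $\|\cdot\|_d$. For the convection term $L(\bm{w}_h;\bm{u}_h,\bm{v}_h)$ I would use the classical Navier--Stokes estimate: by H\"older with exponents $(3,2,6)$ and the embeddings $H^1_0\hookrightarrow L^6\hookrightarrow L^3$, each of the two pieces is bounded by $C\|\nabla\bm{w}_h\|\,\|\nabla\bm{u}_h\|\,\|\nabla\bm{v}_h\|\le C\|\bm{\psi}_h\|_{\bm W}\|\bm{\xi}_h\|_{\bm W}\|\bm{\eta}_h\|_{\bm W}$.

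The crux is the three magnetic coupling terms, each of which pairs an $H(\div)$ field with a velocity or with another $H(\div)$ field. The common mechanism is H\"older's inequality with $\tfrac13+\tfrac16=\tfrac12$: for the Lorentz term $-\alpha(\nabla_h\times\bm{B}_h,\bm{G}_h\times\bm{v}_h)$ I would estimate $\|\bm{G}_h\times\bm{v}_h\|\le\|\bm{G}_h\|_{0,3}\|\bm{v}_h\|_{0,6}$ and pair the first factor of the inner product with $\|\nabla_h\times\bm{B}_h\|$; for $-\alpha(\bm{u}_h\times\bm{G}_h,\nabla_h\times\bm{C}_h)$ symmetrically; and for the penalty term, since $I-\mathbb{P}$ is an $L^2$-orthogonal projection (hence a contraction), $\|(I-\mathbb{P})(\bm{u}_h\times\bm{G}_h)\|\le\|\bm{u}_h\times\bm{G}_h\|\le\|\bm{u}_h\|_{0,6}\|\bm{G}_h\|_{0,3}$ and similarly for $\bm{v}_h\times\bm{G}_h$. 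Using $\|\bm{v}_h\|_{0,6}\le C\|\nabla\bm{v}_h\|$ from \eqref{def:C1}, every such term reduces to controlling $\|\bm{G}_h\|_{0,3}$ by $\|\bm{G}_h\|_d$. Note that the Lorentz and coupling terms are then genuinely trilinear, whereas the penalty term carries a factor $\|\bm{G}_h\|_{0,3}^2$, i.e. it is quadratic in $\bm{\psi}_h$; this still yields continuity of the form, and in the application $\|\bm{\psi}_h\|_{\bm W}$ is controlled by the a priori energy bound.

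The hard part is therefore the $L^3$ bound $\|\bm{G}_h\|_{0,3}\le C\|\bm{G}_h\|_d$ for a \emph{general} $\bm{G}_h\in H^h_0(\div,\Omega)$, since Theorem \ref{L3estimate} supplies it only for divergence-free fields. My plan is to use the discrete Hodge decomposition $\bm{G}_h=\bm{d}_h+\bm{s}_h$, $L^2$-orthogonal, with $\bm{d}_h\in H^h_0(\div0,\Omega)$ and $\bm{s}_h$ in its orthogonal complement. Since every $\nabla\times\bm{F}_h$ lies in $H^h_0(\div0,\Omega)$, the definition \eqref{dis_curl} gives $(\bm{s}_h,\nabla\times\bm{F}_h)=0$, whence $\nabla_h\times\bm{s}_h=0$ and $\nabla_h\times\bm{d}_h=\nabla_h\times\bm{G}_h$; Theorem \ref{L3estimate} then yields $\|\bm{d}_h\|_{0,3}\le C\|\nabla_h\times\bm{G}_h\|\le C\|\bm{G}_h\|_d$, while a discrete Poincar\'e inequality gives $\|\bm{s}_h\|\le C\|\nabla\cdot\bm{G}_h\|$. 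What remains is the $L^3$ control of the (discretely) curl-free remainder $\bm{s}_h$; I expect this to be the main obstacle, and I would obtain it by the Hodge-mapping argument used in the proof of Theorem \ref{L3estimate_new} — map $\bm{s}_h$ to a continuous field in $H^{1/2+\delta}$ with an $O(h^{1/2+\delta})$ $L^2$-error and absorb the inverse-inequality factor $h^{-1/2}$ — which is the ``divergence'' counterpart of that estimate and relies on elliptic regularity of the scalar Neumann problem on the Lipschitz domain $\Omega$. All constants produced this way are mesh-independent, which delivers the asserted continuity of $\bm{a}_s$ and $\bm{b}_s$.
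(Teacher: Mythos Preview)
The paper does not prove this lemma beyond the one-line remark that ``the boundedness of the variational form is a direct consequence of the key $L^{3}$ estimate'' (Theorem~\ref{L3estimate}). Your term-by-term plan---Cauchy--Schwarz for the linear pieces, the standard H\"older estimate of Lemma~\ref{convection} for the convection, and H\"older $(2,3,6)$ together with \eqref{def:C1} and Theorem~\ref{L3estimate} for the magnetic coupling terms---is exactly the argument the paper has in mind, and you carry it out more carefully than the paper does.

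You have in fact caught two genuine inaccuracies in the statement. First, the penalty term $\S\bigl((I-\mathbb{P})(\bm{u}_h\times\bm{G}_h),(I-\mathbb{P})(\bm{v}_h\times\bm{G}_h)\bigr)$ is indeed quadratic in $\bm{\psi}_h=(\bm{w}_h,\bm{G}_h)$, so a bound with a single factor $\|\bm{\psi}_h\|_{\bm W}$ cannot hold by scaling; your reading (continuity with a factor $\|\bm{\psi}_h\|_{\bm W}^{2}$, harmless since $\|\bm{\psi}_h\|_{\bm W}$ is a~priori bounded in the application) is the right one. Second, you are correct that Theorem~\ref{L3estimate} only controls $\|\bm{G}_h\|_{0,3}$ for $\bm{G}_h\in H^h_0(\div0,\Omega)$, so the assertion over all of $\bm{W}_h$ does not follow from it immediately. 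However, the extra Hodge-mapping argument you sketch for the curl-free remainder $\bm{s}_h$ is not needed for the paper's purposes: in every use of this lemma---the fixed-point argument, which takes place on $\bm{W}^{00}_h$, and the recovery of $(p_h,r_h)$ once the solution $\bm{\xi}_h\in\bm{W}^{00}_h$ is known---the first slot $\bm{\psi}_h$ lies in $\bm{W}^{00}_h$, so $\bm{G}_h$ is divergence-free and Theorem~\ref{L3estimate} applies directly. The lemma is simply stated a little more generally than it is used; the paper's intended proof is your argument restricted to $\bm{G}_h\in H^h_0(\div0,\Omega)$, without the additional step.
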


Since we have used a stronger norm for  $\bm{B}_{h}, \bm{C}_{h}\in H_{0}^{h}(\div, \Omega)$, 
the inf-sup condition for the bilinear form $\bm{b}_{s}(\cdot, \cdot)$ becomes more subtle. Following a similar proof as shown in \cite{hu2015structure} for the $\bm{B}$-$\bm{j}$ formulation, we get:
\begin{lemma}{\bf (inf-sup conditions for ${\bm{b}_{s}}(\cdot,\cdot)$)} \label{lem:inf-supb-BE-stationary}
There exists a positive constant $\gamma$ such that
$$
\inf_{\bm{y}_{h}\in \bm{Y}_{h}}\sup_{{\bm{\eta}}_{h}\in \bm{W}_{h}} \frac{{\bm{b}_{s}}( {\bm{\eta}}_{h}, \bm{y}_{h})}{\|{\bm{\eta}}_{h}\|_{\bm{W}}\|\bm{y}_{h}\|_{\bm{Y}}}\geq \gamma>0.
$$
\end{lemma}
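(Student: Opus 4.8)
The plan is to exploit the block structure of $\bm{b}_s$. Since
$\bm{b}_{s}(\bm{\eta}_{h}, \bm{y}_{h}) = -(\nabla\cdot\bm{v}_{h}, q_{h}) + (\nabla\cdot\bm{C}_{h}, s_{h})$
couples the velocity $\bm{v}_{h}$ only with the pressure $q_{h}$ and the magnetic field $\bm{C}_{h}$ only with the multiplier $s_{h}$, and since both $\|\cdot\|_{\bm{W}}$ and $\|\cdot\|_{\bm{Y}}$ split accordingly, it suffices to establish two independent inf-sup conditions — one for the Stokes pair $(\bm{V}_{h}, Q_{h})$ and one for the pair $(H_{0}^{h}(\div,\Omega), L_{0}^{2,h}(\Omega))$ — and then to test with $\bm{\eta}_{h} = (\bm{v}_{h}, \bm{C}_{h})$ assembled from the two separate maximizers. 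Choosing each piece so that $\|\bm{v}_{h}\|_{1} \leq \|q_{h}\|$ and $\|\bm{C}_{h}\|_{d} \leq \|s_{h}\|$ while the two bilinear contributions are bounded below by $\beta_{1}\|q_{h}\|^{2}$ and $\beta_{2}\|s_{h}\|^{2}$, the two positive contributions add, $\|\bm{\eta}_{h}\|_{\bm{W}} \leq \|\bm{y}_{h}\|_{\bm{Y}}$, and the ratio is bounded below by $\gamma = \min(\beta_{1}, \beta_{2})$.

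The velocity--pressure estimate is classical: because $(\bm{V}_{h}, Q_{h})$ is a stable Stokes pair (e.g.\ Taylor--Hood), there is an $h$-uniform $\beta_{1} > 0$ and, for each $q_{h} \in Q_{h}$, a field $\bm{v}_{h} \in \bm{V}_{h}$ with $-(\nabla\cdot\bm{v}_{h}, q_{h}) \geq \beta_{1}\|q_{h}\|^{2}$ and $\|\bm{v}_{h}\|_{1} \leq \|q_{h}\|$. No new idea is required here.

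The magnetic--multiplier estimate is where the difficulty lies, and it is caused entirely by the stronger norm $\|\cdot\|_{d}$, which carries the discrete-curl term $\|\nabla_{h}\times\bm{C}_{h}\|$ that is absent from the ordinary $H(\div)$ norm. I would begin from the standard discrete inf-sup associated with the exact sequence in Figure~\ref{exact-sequence}: since $\nabla\cdot : H_{0}^{h}(\div,\Omega) \to L_{0}^{2,h}(\Omega)$ is surjective with an $h$-uniform bounded right inverse, there is $\beta > 0$ and, for each $s_{h}$, a field $\bm{C}_{h}^{*}$ with $(\nabla\cdot\bm{C}_{h}^{*}, s_{h}) \geq \beta\|s_{h}\|^{2}$ and $\|\bm{C}_{h}^{*}\|_{\div} \leq \|s_{h}\|$. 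The crux is that I may replace $\bm{C}_{h}^{*}$ by its $L^{2}$-orthogonal projection $\bm{C}_{h}$ onto the complement of $H_{0}^{h}(\div 0,\Omega)$. Because $\Omega$ is contractible the discrete sequence is exact, so $H_{0}^{h}(\div 0,\Omega) = \nabla\times H_{0}^{h}(\curl,\Omega)$; hence, by the definition \eqref{dis_curl} of $\nabla_{h}\times$, a field is $L^{2}$-orthogonal to $H_{0}^{h}(\div 0,\Omega)$ exactly when its discrete curl vanishes. This projection leaves the divergence unchanged (the discarded part is divergence-free), does not increase the $L^{2}$-norm, and annihilates $\nabla_{h}\times\bm{C}_{h}$; consequently $\|\bm{C}_{h}\|_{d}^{2} = \|\bm{C}_{h}\|^{2} + \|\nabla\cdot\bm{C}_{h}\|^{2} \leq \|\bm{C}_{h}^{*}\|_{\div}^{2} \leq \|s_{h}\|^{2}$, while $(\nabla\cdot\bm{C}_{h}, s_{h}) = (\nabla\cdot\bm{C}_{h}^{*}, s_{h}) \geq \beta\|s_{h}\|^{2}$. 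The $\|\cdot\|_{d}$-inf-sup therefore holds with $\beta_{2} = \beta$.

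I expect the main obstacle to be precisely this control of the discrete-curl contribution in $\|\cdot\|_{d}$: without the exactness-based identification of the $L^{2}$-orthogonal complement of $H_{0}^{h}(\div 0,\Omega)$ as the kernel of $\nabla_{h}\times$, one could not guarantee that the divergence right-inverse may be chosen with vanishing discrete curl, and the extra term in $\|\cdot\|_{d}$ would spoil the bound. Everything else — the Stokes inf-sup and the assembly of the two blocks — is routine. This mirrors the argument used in \cite{hu2015structure} for the $\bm{B}$-$\bm{j}$ formulation.
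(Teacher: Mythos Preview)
Your proposal is correct and follows essentially the same route that the paper invokes: the paper does not write out a proof for this lemma but refers to \cite{hu2015structure}, and your argument---split into the Stokes block and the magnetic block, then in the magnetic block project the standard $H(\div)$ right-inverse onto the $L^{2}$-orthogonal complement of $H_{0}^{h}(\div 0,\Omega)$ to annihilate $\nabla_{h}\times$---is precisely that argument. Your identification of the key step (that exactness gives $\nabla\times H_{0}^{h}(\curl,\Omega)=H_{0}^{h}(\div 0,\Omega)$, so $\nabla_{h}\times\bm{C}_{h}=0$ iff $\bm{C}_{h}\perp H_{0}^{h}(\div 0,\Omega)$) is exactly what is needed.
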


The coercivity of $\bm{a}_{s}(\cdot; \cdot, \cdot)$ holds on the kernel space $\bm{W}_{h}^{00}$.
\begin{lemma}\label{lem:a-BE-stationary}
On $\bm{W}_{h}^{00}$ we have
$$
\bm{a}_{s}(\bm{\xi}_{h}; \bm{\xi}_{h}, \bm{\xi}_{h})\geq \gamma \|\bm{\xi}_{h}\|_{\bm{W}}^{2},
$$
where $\gamma$ is a positive constant.
\end{lemma}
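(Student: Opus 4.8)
The plan is to test the trilinear form along the diagonal, setting all three arguments equal to $\bm{\xi}_h = (\bm{u}_h, \bm{B}_h) \in \bm{W}_h^{00}$ (so that $\bm{w}_h = \bm{v}_h = \bm{u}_h$, $\bm{G}_h = \bm{C}_h = \bm{B}_h$), and to exploit the structure-preserving cancellations built into $\bm{a}_s$. First I would observe that the convection contribution vanishes identically: the skew-symmetric form gives ${1\over2}[((\bm{u}_h\cdot\nabla)\bm{u}_h,\bm{u}_h)-((\bm{u}_h\cdot\nabla)\bm{u}_h,\bm{u}_h)]=0$. Next, the two coupling terms cancel exactly. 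Using $\bm{B}_h\times\bm{u}_h=-\bm{u}_h\times\bm{B}_h$, the Lorentz term becomes $-\alpha(\nabla_h\times\bm{B}_h,\bm{B}_h\times\bm{u}_h)=\alpha(\nabla_h\times\bm{B}_h,\bm{u}_h\times\bm{B}_h)$, while the electromotive term is $-\alpha(\bm{u}_h\times\bm{B}_h,\nabla_h\times\bm{B}_h)=-\alpha(\nabla_h\times\bm{B}_h,\bm{u}_h\times\bm{B}_h)$, so their sum is zero. Since the penalty term $\S\|(I-\mathbb{P})(\bm{u}_h\times\bm{B}_h)\|^2\ge 0$ can be discarded, I am left with the clean identity
\[
\bm{a}_s(\bm{\xi}_h;\bm{\xi}_h,\bm{\xi}_h)\ge R_e^{-1}\|\nabla\bm{u}_h\|^2+\S R_m^{-2}\|\nabla_h\times\bm{B}_h\|^2.
\]

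It then remains to bound the full $\bm{W}$-norm by these two coercive quantities. For the velocity part, since $\bm{u}_h\in\bm{V}_h^0\subset[H_0^1(\Omega)]^3$, the \Poin inequality gives $\|\bm{u}_h\|\le C\|\nabla\bm{u}_h\|$, hence $\|\bm{u}_h\|_1^2\le(1+C^2)\|\nabla\bm{u}_h\|^2$. For the magnetic part, I would use that $\bm{\xi}_h\in\bm{W}_h^{00}$ forces $\nabla\cdot\bm{B}_h=0$, so the modified norm collapses to $\|\bm{B}_h\|_d^2=\|\bm{B}_h\|^2+\|\nabla_h\times\bm{B}_h\|^2$. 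The only missing piece is a Poincar\'e-type control $\|\bm{B}_h\|\le C\|\nabla_h\times\bm{B}_h\|$, and this is exactly where Theorem \ref{L3estimate} enters: since $\bm{B}_h\in H_0^h(\div0,\Omega)$, that theorem gives $\|\bm{B}_h\|_{0,3}\le C\|\nabla_h\times\bm{B}_h\|$, and H\"older's inequality on the bounded domain $\Omega$ yields $\|\bm{B}_h\|=\|\bm{B}_h\|_{0,2}\le|\Omega|^{1/6}\|\bm{B}_h\|_{0,3}\le C\|\nabla_h\times\bm{B}_h\|$. Consequently $\|\bm{B}_h\|_d^2\le(1+C^2)\|\nabla_h\times\bm{B}_h\|^2$.

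Combining the two bounds, the coercivity constant can be taken as
\[
\gamma=\min\!\left(\frac{R_e^{-1}}{1+C_P^2},\ \frac{\S R_m^{-2}}{1+C_{L^3}^2}\right)>0,
\]
where $C_P$ and $C_{L^3}$ are the \Poin and $L^3$-embedding constants, which depend only on $\Omega$. The algebraic cancellations are effortless by design, so the genuine content of the lemma—and the step I expect to be the main obstacle—is the Poincar\'e-type estimate for $\bm{B}_h$: the norm $\|\cdot\|_d$ measures $\bm{B}_h$ in $L^2$, but $\bm{a}_s$ only directly controls $\|\nabla_h\times\bm{B}_h\|$, and closing this gap for an $H(\div)$-conforming, merely discretely divergence-free field is nontrivial, resting entirely on the key $L^3$ estimate of Theorem \ref{L3estimate} rather than on any elementary discrete inequality.
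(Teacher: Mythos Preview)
Your proof is correct and follows the same route as the paper's: compute $\bm{a}_s(\bm{\xi}_h;\bm{\xi}_h,\bm{\xi}_h)$ on the diagonal, observe the cancellations, and then invoke the discrete Poincar\'e inequality $\|\bm{B}_h\|\lesssim\|\nabla_h\times\bm{B}_h\|$ on $H_0^h(\div0,\Omega)$ together with the standard Poincar\'e inequality for $\bm{u}_h$. The paper simply states the identity $\bm{a}_s(\bm{\xi}_h;\bm{\xi}_h,\bm{\xi}_h)=R_e^{-1}\|\nabla\bm{u}_h\|^2+\S\|(I-\mathbb{P})(\bm{u}_h\times\bm{B}_h)\|^2+\S R_m^{-2}\|\nabla_h\times\bm{B}_h\|^2$ and the discrete Poincar\'e bound without further comment; your version spells out the cancellations and derives the Poincar\'e bound from Theorem~\ref{L3estimate}, which is a legitimate (indeed slightly stronger) way to obtain it.
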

\begin{proof}
We note that
$$
\bm{a}_{s}(\bm{\xi}_{h}; \bm{\xi}_{h}, \bm{\xi}_{h})=R_{e}^{-1}\|\nabla\bm{u}_{h}\|^{2}+\S \|(I-\mathbb{P})\left ( \bm{u}_{h}\times \bm{B}_{h}\right )\|^{2}+\S R_{m}^{-2}\|\nabla_{h}\times \bm{B}_{h}\|^{2}.
$$
Discrete  Poincar\'{e}'s inequality holds on $\bm{W}_{h}^{00}$:
$$
\|\bm{B}_{h}\|\lesssim \|\nabla_{h}\times \bm{B}_{h}\|.
$$
This completes the proof.
\end{proof}

By Lemma \ref{lem:boudnedness-stationary-BE}, Lemma \ref{lem:inf-supb-BE-stationary} and Lemma \ref{lem:a-BE-stationary},  the nonlinear variational form \eqref{brezzip01} is well-posed.  Therefore  \eqref{BE-stationary} has at least one solution. For suitable source and boundary data, the solution is also unique.

\subsection{Picard iterations}

We propose the following Picard type iterations for Problem  \ref{prob:continuous}:
\begin{algorithm}[Picard iterations for nonlinear schemes]
\label{alg:picard-s}
Given $(\bm{u}^{n-1},\bm{B}^{n-1})$,   find $(\bm{u}^{n}, \bm{E}^{n}, \bm{B}^{n})\in \bm{X}_{h}$ and $ (p^{n}, r^{n})\in \bm{Y}_{h}$, such that for any $(\bm{v}, \bm{F}, \bm{C})\in \bm{X}_{h} $ and $ (q, s)\in \bm{Y}_{h}$,
\begin{align}
\label{picard1}
 L(\bm{u}^{n-1}; \bm{u}^{n}, \bm{v}) 
  +{R_{e}^{-1}} (\nabla \bm{u}^{n}, \nabla \bm{v}) 
  - \S (\bm{j}^{n}_{n-1}\times \bm{B}^{n-1},\bm{v} ) - (p^{n},\nabla\cdot \bm{v}) 
& = \langle \bm{f},\bm{v} \rangle,\\
\label{picard2} 
 \S (\bm{j}^{n}_{n-1},\bm{F}) - \alpha ( \bm{B}^{n}, \nabla\times \bm{F}) &=0, \\
\label{picard3}
\alpha(\nabla\times \bm{E}^{n}, \bm{C}) +(r^{n}, \nabla\cdot \bm{C})&= 0, \\ 
\label{picard4} 
 -(\nabla\cdot \bm{u}^{n}, q) &=0,\\ \label{picard5}
 (\nabla\cdot \bm{B}^{n}, s) &= 0, 
\end{align}
where $\bm{j}^{n}_{n-1}$ is defined by
$\bm{j}^{n}_{n-1} = \bm{E}^{n} + \bm{u}^{n}\times \bm{B}^{n-1} $.
\end{algorithm}

The divergence-free property, compatibility and energy estimates can be obtained in an analogous way:
\begin{theorem}
For any possible solution to Algorithm \ref{alg:picard-s}:
\begin{enumerate}
\item
magnetic Gauss's law holds precisely:
$$
\nabla\cdot \bm{B}^{n}=0.
$$
\item
the Lagrange multiplier $r^{n}=0$, therefore \eqref{picard3} has the form
$$
\nabla\times \bm{E}^{n}=0.
$$
\item
the energy estimates hold:
$$
R_{e}^{-1}\|\nabla\bm{u}^{n}\|^{2}+\S\|\bm{j}^{n}_{n-1}\|^{2}=\langle \bm{f}, \bm{u}^{n}\rangle,
$$
and
\begin{align}\label{energy-E}
\frac{1}{2}R_{e}^{-1}\|\nabla\bm{u}^{n}\|^{2}+\S \|\bm{j}^{n}_{n-1}\|^{2}\leq \frac{1}{2}R_{e}\|\bm{f}\|_{-1}^{2}.
\end{align}
\end{enumerate}
\end{theorem}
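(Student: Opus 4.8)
The plan is to mirror the three-part structure of the analogous stability theorem (Theorem \ref{stability_fem}) for the nonlinear scheme, since the Picard iteration equations \eqref{picard1}--\eqref{picard5} have the same algebraic form as \eqref{fem-1}--\eqref{fem-5}, with the only essential difference being that the convection and Lorentz-coupling terms are \emph{linearized} by evaluating the velocity in the trilinear form and the magnetic field in the cross product at the previous iterate $(\bm{u}^{n-1}, \bm{B}^{n-1})$. The key observation that makes the proof go through is that this linearization preserves the antisymmetric/skew structure that drives the energy cancellation.

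\textbf{Part (1): Magnetic Gauss's law.} This is immediate: taking $s$ to range over $L^{2,h}_0(\Omega)$ in \eqref{picard5} forces $\nabla\cdot\bm{B}^{n}=0$ directly, exactly as in the proof of Theorem \ref{stability_fem}.

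\textbf{Part (2): The Lagrange multiplier.} I would take $\bm{C}=\nabla\times\bm{E}^{n}$ in \eqref{picard3}. Since $\nabla\times\bm{E}^n \in H_0^h(\div,\Omega)$ with $\nabla\cdot(\nabla\times\bm{E}^n)=0$, the multiplier term $(r^n,\nabla\cdot\bm{C})$ vanishes and we obtain $\alpha\|\nabla\times\bm{E}^n\|^2=0$, hence $\nabla\times\bm{E}^n=0$. Feeding this back, \eqref{picard3} reduces to $(r^n,\nabla\cdot\bm{C})=0$ for all $\bm{C}\in H_0^h(\div,\Omega)$, and since $\nabla\cdot$ maps $H_0^h(\div,\Omega)$ onto $L^{2,h}_0(\Omega)$ (the discrete de Rham exactness in Figure \ref{exact-sequence}), we conclude $r^n=0$.

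\textbf{Part (3): Energy estimate.} I would test \eqref{picard1}--\eqref{picard4} with $\bm{v}=\bm{u}^{n}$, $\bm{F}=\bm{E}^{n}$, $\bm{C}=\bm{B}^{n}$, $q=p^{n}$ and add. The pressure terms cancel against the divergence constraint \eqref{picard4}. The linearized convection term vanishes because $L(\bm{u}^{n-1};\bm{u}^{n},\bm{u}^{n})=0$ by the antisymmetry built into $L(\cdot;\cdot,\cdot)$. The crucial cancellation is between the Lorentz term $-\S(\bm{j}^n_{n-1}\times\bm{B}^{n-1},\bm{u}^n)$ from \eqref{picard1} and the coupling term $\S(\bm{u}^n\times\bm{B}^{n-1},\bm{E}^n)$ arising from substituting $\bm{j}^n_{n-1}=\bm{E}^n+\bm{u}^n\times\bm{B}^{n-1}$ into the $\S(\bm{j}^n_{n-1},\bm{F})$ term of \eqref{picard2}: using the scalar triple product identity $(\bm{j}^n_{n-1}\times\bm{B}^{n-1},\bm{u}^n)=(\bm{u}^n\times\bm{B}^{n-1},\bm{j}^n_{n-1})$ and $(\bm{u}^n\times\bm{B}^{n-1},\bm{u}^n\times\bm{B}^{n-1})$ being part of $\|\bm{j}^n_{n-1}\|^2$, the coupling cancels and leaves exactly $R_e^{-1}\|\nabla\bm{u}^n\|^2+\S\|\bm{j}^n_{n-1}\|^2=\langle\bm{f},\bm{u}^n\rangle$; this is where the definition $\bm{j}^n_{n-1}=\bm{E}^n+\bm{u}^n\times\bm{B}^{n-1}$ is used in the same way as in Theorem \ref{stability_fem}. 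The bound \eqref{energy-E} then follows from Young's inequality applied to $\langle\bm{f},\bm{u}^n\rangle\le\|\bm{f}\|_{-1}\|\nabla\bm{u}^n\|$, identically to the derivation of \eqref{energy-2}.

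The proof should present no genuine obstacle, since every step has a direct counterpart in Theorem \ref{stability_fem}; the only point requiring care is verifying that linearizing at $\bm{B}^{n-1}$ rather than $\bm{B}^n$ does not break the Lorentz-coupling cancellation. The reason it does not is that the cancellation only relies on the \emph{same} frozen field $\bm{B}^{n-1}$ appearing in both the Lorentz force term of \eqref{picard1} and in Ohm's law defining $\bm{j}^n_{n-1}$, which is exactly how the algorithm is set up. I would remark that this is precisely the structural feature that makes the Picard linearization energy-stable and hence why the same estimate propagates uniformly across iterations.
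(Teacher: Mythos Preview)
Your proposal is correct and follows exactly the route the paper indicates (the paper itself does not spell out a proof, only noting that it is obtained ``in an analogous way'' to Theorem~\ref{stability_fem}). One minor slip: the triple product identity you quote should read $(\bm{j}^n_{n-1}\times\bm{B}^{n-1},\bm{u}^n)=-(\bm{u}^n\times\bm{B}^{n-1},\bm{j}^n_{n-1})$, so that the Lorentz term $-\S(\bm{j}^n_{n-1}\times\bm{B}^{n-1},\bm{u}^n)$ combines with $\S(\bm{j}^n_{n-1},\bm{E}^n)$ to give $\S(\bm{j}^n_{n-1},\bm{E}^n+\bm{u}^n\times\bm{B}^{n-1})=\S\|\bm{j}^n_{n-1}\|^2$; the conclusion you reach is nevertheless the right one.
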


We will use the Brezzi theory to prove the well-posedness of the Picard iterations. We first recast Picard iterations (Algorithm \ref{alg:picard-s}) as follows. Given $(\bm{u}^{-}, \bm{B}^{-})\in \bm{W}_{h}$. 
For ${\bm{\mathfrak{U}}}=(\bm{u,E,B})$, ${\bm{\mathfrak{V}}}=(\bm{v,F,C})\in \bm{X}_{h}$ and $(p, r), 
(q, s)\in \bm{Y}_{h}$, define bilinear forms  $\bm{a}_{s, L}(\cdot, \cdot)$ and $\bm{b}(\cdot, \cdot)$:
\begin{align*}
\bm{a}_{s, L}({\bm{\mathfrak{U}}}, {\bm{\mathfrak{V}}})
 :=&{{1 \over 2}L\left ( \bm{u}^-; \bm{u}, \bm{v} \right )+R_{e}^{-1}(\nabla \bm{u}, \nabla \bm{v}) }
+ \S (\bm{E}+\bm{u}\times {\bm{B}} ^{-}, \bm{F}+\bm{v}\times {\bm{B}} ^{-})\\
& 
- \alpha (\bm{B}, \nabla\times \bm{F})
+ \alpha (\nabla \times \bm{E}, \bm{C}).
\end{align*}

Given a nonlinear iterative step, the mixed form of the iterative scheme in Algorithm \ref{alg:picard-s}
can be written as: for any $\bm{h}=(\bm{f}, \bm{r}, \bm{l})\in \bm{X}_{h}^{\ast} $ and $\bm{g}\in \bm{Y}_{h}^*$, 
find $({\bm{\mathfrak{U}}}, \bm{x})\in  \bm{X}_{h} \times \bm{Y}_{h}$, such that for any 
$({\bm{\mathfrak{V}}}, \bm{y}) \in \bm{X}_{h} \times \bm{Y}_{h}$,  
\begin{align}
\begin{cases}
& \bm{a}_{s, L}({\bm{\mathfrak{U}}}, \bm{\eta})+ \bm{b}_{s}({\bm{\mathfrak{V}}}, \bm{x})
=\langle \bm{h}, \bm{\eta} \rangle, \\
& \bm{b}_{s}({\bm{\mathfrak{U}}},\bm{y})
=\langle \bm{g}, \bm{y} \rangle .
\end{cases}
\label{brezzip}
\end{align}

To prove the well-posedness of \eqref{brezzip} based on the Brezzi theory, we need to verify the boundedness of each term, the inf-sup condition of $\bm{b}_{s}(\cdot, \cdot)$ and the coercivity of $\bm{a}_{s, L}(\cdot,\cdot)$ on $\bm{X}_{h}^{00}$.

 For the inf-sup condition of $\bm{b}_{s}(\cdot, \cdot)$,  we have:
\begin{lemma}{\bf (inf-sup conditions of $\bm{b}_{s}(\cdot,\cdot)$)} \label{lem:inf-sup-b-s}
There exists a positive constant $\gamma$ such that
$$
\inf_{\bm{y}\in \bm{Y}_{h}}\sup_{{\bm{\mathfrak{V}}}\in \bm{X}_{h}} \frac{\bm{b}_{s}(\bm{\mathfrak{V}}, \bm{y})}{\|\bm{\mathfrak{V}}\|_{\bm{X}}\|\bm{y}\|_{\bm{Y}}}\geq \gamma>0.
$$
\end{lemma}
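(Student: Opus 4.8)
The plan is to exploit the fact that $\bm{b}_{s}$ decouples into a velocity--pressure part and a magnetic--multiplier part, and that it does not involve the middle ($\bm{E}$-type) component of $\bm{X}_{h}$ at all. Given $\bm{y}=(q,s)\in\bm{Y}_{h}$, I would construct a test function of the special form $\bm{\mathfrak{V}}=(\bm{v},\bm{0},\bm{C})\in\bm{X}_{h}$, i.e. with vanishing middle component, and show it realizes the supremum up to a constant. Since $\bm{F}$ never enters $\bm{b}_{s}$, its only effect is through the $\bm{X}$-norm \eqref{stationary-BE-norm-X}, and the choice $\bm{F}=\bm{0}$ keeps that contribution under control.

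First, for the velocity--pressure block I would invoke the inf-sup stability of the chosen Stokes pair $(\bm{V}_{h},Q_{h})$ (e.g. Taylor--Hood): there exists $\bm{v}\in\bm{V}_{h}$ with $-(\nabla\cdot\bm{v},q)\ge\beta_{1}\|q\|^{2}$ and $\|\bm{v}\|_{1}\le\|q\|$. Second, for the magnetic--multiplier block I would use the surjectivity of the discrete divergence $\nabla\cdot\colon H_{0}^{h}(\div,\Omega)\to L_{0}^{2,h}(\Omega)$, guaranteed by the exactness of the discrete de Rham sequence in Figure \ref{exact-sequence}; taking the component orthogonal to $H_{0}^{h}(\div0,\Omega)$ yields $\bm{C}\in H_{0}^{h}(\div,\Omega)$ with $\nabla\cdot\bm{C}=s$ and $\|\bm{C}\|_{\mathrm{div}}\le C\|s\|$. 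Because $\bm{b}_{s}$ separates the two blocks, the test function $\bm{\mathfrak{V}}=(\bm{v},\bm{0},\bm{C})$ satisfies $\bm{b}_{s}(\bm{\mathfrak{V}},\bm{y})=-(\nabla\cdot\bm{v},q)+(\nabla\cdot\bm{C},s)\ge\beta_{1}\|q\|^{2}+\|s\|^{2}\ge\min(\beta_{1},1)\,\|\bm{y}\|_{\bm{Y}}^{2}$.

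It then remains to bound $\|\bm{\mathfrak{V}}\|_{\bm{X}}$ by $C\|\bm{y}\|_{\bm{Y}}$. With $\bm{F}=\bm{0}$ the terms $\|\bm{v}\|^{2}+\|\nabla\bm{v}\|^{2}$ and $\|\bm{C}\|^{2}+\|\nabla\cdot\bm{C}\|^{2}$ in \eqref{stationary-BE-norm-X} are already controlled by $\|q\|^{2}$ and $\|s\|^{2}$ respectively, and $\|\nabla\times\bm{F}\|=0$. The only remaining term is $\|\bm{F}+\bm{v}\times\bm{B}^{-}\|=\|\bm{v}\times\bm{B}^{-}\|$, which I would estimate via the generalized Hölder inequality and the Sobolev embedding \eqref{def:C1}: $\|\bm{v}\times\bm{B}^{-}\|\le\|\bm{v}\|_{0,6}\|\bm{B}^{-}\|_{0,3}\le C\|\nabla\bm{v}\|\,\|\bm{B}^{-}\|_{0,3}\le C\|\bm{B}^{-}\|_{0,3}\|q\|$. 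Collecting the bounds gives $\|\bm{\mathfrak{V}}\|_{\bm{X}}\le C(1+\|\bm{B}^{-}\|_{0,3})\|\bm{y}\|_{\bm{Y}}$, and dividing yields the claimed lower bound with $\gamma=\min(\beta_{1},1)/\big(C(1+\|\bm{B}^{-}\|_{0,3})\big)>0$.

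The one genuinely delicate point — and the reason this warrants its own lemma rather than a direct appeal to Lemma \ref{lem:inf-supb-BE-stationary} — is the coupling term $\|\bm{v}\times\bm{B}^{-}\|$ baked into the $\bm{X}$-norm: I must ensure its dependence on $\bm{B}^{-}$ stays bounded. This is exactly where the $L^{3}$ control of $\bm{B}^{-}$ (via Theorem \ref{L3estimate} together with the energy estimates) enters, guaranteeing that $\|\bm{B}^{-}\|_{0,3}$ is finite and bounded by the data, so that $\gamma$ is a genuine positive constant for each fixed iterate $\bm{B}^{-}$. I also note that the discrete curl contribution $\|\nabla_{h}\times\bm{C}\|$, which made the $\bm{W}$-norm inf-sup of Lemma \ref{lem:inf-supb-BE-stationary} subtle, is absent from the $\bm{X}$-norm, so here no control of the discrete curl of $\bm{C}$ is required, making this part strictly simpler.
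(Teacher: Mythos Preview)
Your approach is essentially the same as the paper's: construct $\bm{\mathfrak{V}}=(\bm{v},\bm{0},\bm{C})$ using the Stokes inf-sup for the velocity--pressure block and the surjectivity of the discrete divergence for the magnetic--multiplier block, then bound $\|\bm{\mathfrak{V}}\|_{\bm{X}}$ by $\|\bm{y}\|_{\bm{Y}}$. In fact you are more careful than the paper on one point: the paper's proof only bounds $\|\bm{v}_{q}\|_{1}^{2}+\|\bm{C}_{s}\|_{\mathrm{div}}^{2}$ and does not explicitly address the coupling term $\|\bm{F}+\bm{v}\times\bm{B}^{-}\|=\|\bm{v}\times\bm{B}^{-}\|$ that appears in $\|\cdot\|_{\bm{X}}$, whereas you correctly control it via $\|\bm{v}\|_{0,6}\|\bm{B}^{-}\|_{0,3}$ and the $L^{3}$ estimate on $\bm{B}^{-}$.
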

\begin{proof}
There exists a positive constant $\gamma_{0}>0$ such that
$$
\inf_{q\in Q_{h}}\sup_{\bm{v}\in \bm{V}_{h}} \frac{-(\nabla\cdot \bm{v}, q)}{\|\bm{v}\|_{1}\|q\|}\geq \alpha_{0}>0.
$$
Consequently, for any $q\in Q_{h}$ there exists $\bm{v}_{q}\in \bm{V}_{h}$, such that
$$
-(\nabla\cdot\bm{v}_{q}, q)\geq \gamma_{0} \|q\|^{2},
$$
and
$$
\|\bm{v}_{q}\|_{1}= \|q\|.
$$

For the magnetic multiplier, we have $\nabla\cdot {H_{0}^{h}(\mathrm{div}, \Omega)}={L_{0,h}^2(\Omega)}$. For any $s\in {L_{0,h}^2(\Omega)}$, there exists $\bm{C}_{s}\in {H_{0}^{h}(\mathrm{div}, \Omega)}$ such that $\nabla\cdot \bm{C}_{s}=s$, $\|\bm{C}_{s}\|_{\mathrm{div}}\leq C\|s\|$, where $C$ is a positive constant.

For any ${\bm{\mathfrak{V}}}=(q, s)$, take $\bm{y}=(\bm{v}_{q}, \bm{C}_{s})$.
Then
$$
\bm{b}_{s}( {\bm{\mathfrak{V}}}, \bm{y})=-(\nabla\cdot \bm{v}_{q}, q)+(\nabla\cdot \bm{C}_{s}, s)\geq \gamma_{0} \|q\|^{2}+\|{s}\|^{2}\geq  \min (\gamma_{0},1)\|\bm{y}\|_{\bm{Y}}^{2} ,
$$
and
$$
\|\bm{v}_{q}\|_{1}^{2}+\|\bm{C}_{s}\|_{\mathrm{div}}^{2}\leq \|q\|^{2}+C^{2}\|s\|^{2}\leq \max (1,C^{2}) \|\bm{y}\|_{\bm{Y}}^{2} .
$$

This completes the proof.
\end{proof}

\begin{theorem}\label{thm:wellposed-picard}
Problem \eqref{brezzip}, therefore Algorithm \ref{alg:picard-s}, is well-posed with the norms defined by \eqref{stationary-BE-norm-X} and \eqref{norm-Y}.
\end{theorem}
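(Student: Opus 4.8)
The plan is to establish Theorem \ref{thm:wellposed-picard} by the Brezzi theory applied to the \emph{linear} saddle-point system \eqref{brezzip}, following exactly the three bullet points listed after Theorem \ref{thm:wellposed-stationary}: boundedness of $\bm{a}_{s, L}$ and $\bm{b}_{s}$ on $\bm{X}_{h}\times\bm{Y}_{h}$, the inf-sup condition for $\bm{b}_{s}$, and the coercivity (more precisely the kernel inf-sup) of $\bm{a}_{s, L}$ on $\bm{X}_{h}^{00}$. The inf-sup for $\bm{b}_{s}$ is already furnished by Lemma \ref{lem:inf-sup-b-s}, so only two ingredients remain. Since the system is linear and finite dimensional, existence is equivalent to uniqueness; the purpose of the Brezzi route is to obtain the stability estimate with constants independent of $h$ in the norm \eqref{stationary-BE-norm-X}.

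Boundedness I would dispose of first, following Lemma \ref{lem:boudnedness-stationary-BE}. The convection term $L(\bm{u}^{-};\bm{u},\bm{v})$ is bounded by H\"older and the Sobolev embedding \eqref{def:C1}, with a constant depending on the frozen iterate $\bm{u}^{-}$; the diffusion term is immediate; and the two $\alpha$-coupling terms are controlled by Cauchy--Schwarz since $\|\bm{B}\|$, $\|\nabla\times\bm{F}\|$, $\|\nabla\times\bm{E}\|$ and $\|\bm{C}\|$ all appear directly in $\|\cdot\|_{\bm{X}}$. The only delicate term is the $\S$-term, but here the norm \eqref{stationary-BE-norm-X} is tailored to it: the Ohm combinations $\bm{E}+\bm{u}\times\bm{B}^{-}$ and $\bm{F}+\bm{v}\times\bm{B}^{-}$ are themselves norm components, so $\S(\bm{E}+\bm{u}\times\bm{B}^{-},\bm{F}+\bm{v}\times\bm{B}^{-})\le \S\|\bm{\mathfrak{U}}\|_{\bm{X}}\|\bm{\mathfrak{V}}\|_{\bm{X}}$; the $L^{3}$ estimate (Theorem \ref{L3estimate}) enters only to keep the constants depending on $\bm{B}^{-}$ finite.

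The core is the kernel estimate, which I would build in stages. Testing with $\bm{\mathfrak{V}}=\bm{\mathfrak{U}}$ kills the convection term by the antisymmetry of $L$, and the two coupling terms $-\alpha(\bm{B},\nabla\times\bm{E})$ and $+\alpha(\nabla\times\bm{E},\bm{B})$ cancel, leaving only $R_{e}^{-1}\|\nabla\bm{u}\|^{2}+\S\|\bm{E}+\bm{u}\times\bm{B}^{-}\|^{2}$. This controls $\|\bm{u}\|_{1}$ (via Poincar\'e) and the Ohm component of the norm, but not $\|\nabla\times\bm{E}\|$ or $\|\bm{B}\|$. To recover these on $\bm{X}_{h}^{00}$ I would add two correction directions. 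Testing against $(0,0,\nabla\times\bm{E})$, which lies in $\bm{X}_{h}^{00}$ because $\nabla\times$ maps $H^{h}_{0}(\curl)$ into $H^{h}_{0}(\div0)$, isolates the clean term $\alpha\|\nabla\times\bm{E}\|^{2}$ with no cross terms. Testing against $(0,-\bm{F}_{\ast},0)$, where $\bm{F}_{\ast}\in H^{h}_{0}(\curl,\Omega)$ is a bounded discrete vector potential of $\bm{B}$ (that is, $\nabla\times\bm{F}_{\ast}=\bm{B}$ with $\|\bm{F}_{\ast}\|_{\mathrm{curl}}\le C\|\bm{B}\|$, available from the exactness of the discrete sequence in Figure \ref{exact-sequence} on the contractible $\Omega$ together with the discrete Poincar\'e inequality of Lemma \ref{lem:a-BE-stationary}), produces $\alpha\|\bm{B}\|^{2}$ modulo a cross term $\S(\bm{E}+\bm{u}\times\bm{B}^{-},\bm{F}_{\ast})$ that I would absorb by Young's inequality into the already-controlled $\S\|\bm{E}+\bm{u}\times\bm{B}^{-}\|^{2}$ and $\tfrac{\alpha}{2}\|\bm{B}\|^{2}$. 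Summing $\bm{\mathfrak{U}}$ with small multiples of these two directions, and using $\nabla\cdot\bm{B}=0$ on the kernel, yields $\bm{a}_{s, L}(\bm{\mathfrak{U}},\bm{\mathfrak{V}})\gtrsim\|\bm{\mathfrak{U}}\|_{\bm{X}}^{2}$.

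The step I expect to be the real obstacle is not this lower bound but the matching upper bound $\|\bm{\mathfrak{V}}\|_{\bm{X}}\le C\|\bm{\mathfrak{U}}\|_{\bm{X}}$ with $C$ independent of $h$. The tempting way to harvest the $\bm{B}$-contribution is to test with $\bm{F}=\nabla_{h}\times\bm{B}$, which immediately gives $\alpha\|\nabla_{h}\times\bm{B}\|^{2}$ by \eqref{dis_curl}; but then the $\bm{F}$-slot of the combined test function carries $\|\nabla\times(\nabla_{h}\times\bm{B})\|$ in the $\bm{X}$-norm, and this is only controllable by an inverse inequality at the cost of a factor $h^{-1}$, destroying uniformity. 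Replacing $\nabla_{h}\times\bm{B}$ by the bounded potential $\bm{F}_{\ast}$ is exactly what repairs this: $\|\nabla\times\bm{F}_{\ast}\|=\|\bm{B}\|$ and $\|\bm{F}_{\ast}\|\le C\|\bm{B}\|$ keep every component of $\|(0,-\bm{F}_{\ast},0)\|_{\bm{X}}$ bounded by $\|\bm{B}\|\le\|\bm{\mathfrak{U}}\|_{\bm{X}}$, while $\|(0,0,\nabla\times\bm{E})\|_{\bm{X}}=\|\nabla\times\bm{E}\|$ since $\div\curl=0$. With both the lower and the upper bound in hand, the kernel inf-sup follows, and Brezzi's theorem gives the well-posedness of \eqref{brezzip}, hence of Algorithm \ref{alg:picard-s}, in the norms \eqref{stationary-BE-norm-X} and \eqref{norm-Y}.
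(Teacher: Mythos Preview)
Your approach is correct and in fact more complete than the paper's own proof. The paper simply computes $\bm{a}_{s,L}(\bm{\mathfrak{U}},\bm{\mathfrak{U}})=R_e^{-1}\|\nabla\bm{u}\|^{2}+\S\|\bm{E}+\bm{u}\times\bm{B}^{-}\|^{2}$, declares $\bm{a}_{s,L}$ ``coercive on $\bm{X}_h^{00}$'', and invokes Brezzi together with Lemma~\ref{lem:inf-sup-b-s}. As you observed, this diagonal test does not control $\|\nabla\times\bm{E}\|$ or $\|\bm{B}\|$: with $\bm{u}=\bm{E}=0$ and $0\neq\bm{B}\in H_0^h(\div0,\Omega)$ one has $\bm{a}_{s,L}(\bm{\mathfrak{U}},\bm{\mathfrak{U}})=0$ while $\|\bm{\mathfrak{U}}\|_{\bm{X}}\neq 0$, so genuine coercivity in the norm~\eqref{stationary-BE-norm-X} fails. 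Read charitably, the paper's identity yields uniqueness of the homogeneous problem (once $\bm{u}=0$ and $\bm{E}=0$, test \eqref{brezzip} with a discrete potential of $\bm{B}$ to force $\bm{B}=0$, then recover $\bm{x}$ via the inf-sup), and hence existence in finite dimensions; but it does not by itself furnish the $h$-uniform stability bound that ``well-posed with the norms'' suggests.

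Your kernel inf-sup construction with the two auxiliary directions $(0,0,\nabla\times\bm{E})$ and $(0,-\bm{F}_{\ast},0)$ is precisely what upgrades the paper's argument to a full Brezzi estimate with $h$-independent constants; the upper bound $\|\bm{\mathfrak{V}}\|_{\bm{X}}\lesssim\|\bm{\mathfrak{U}}\|_{\bm{X}}$ that you carefully check is exactly the point the paper glosses over. One small correction: the inequality $\|\bm{F}_{\ast}\|\le C\|\bm{B}\|$ you invoke is the discrete Poincar\'e inequality for $\curl$ on the $L^{2}$-complement of $\nabla H_0^h(\grad,\Omega)$ inside $H_0^h(\curl,\Omega)$; this is standard from the discrete de Rham framework (Figure~\ref{exact-sequence}), but it is not the inequality recorded in Lemma~\ref{lem:a-BE-stationary}, which is $\|\bm{B}_h\|\lesssim\|\nabla_h\times\bm{B}_h\|$ on $H_0^h(\div0,\Omega)$. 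You should also make explicit that the weight on the $(0,-\bm{F}_{\ast},0)$ direction must be chosen small relative to $\alpha/(C^{2}\S)$ so that the Young step absorbs the cross term without consuming the $\S\|\bm{E}+\bm{u}\times\bm{B}^{-}\|^{2}$ already in hand.
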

\begin{proof}
The boundedness of the variational form is obvious from the definition of $\|\cdot\|_{\bm{X}}$.
Moreover, we note that $\bm{a}_{s, L}({\bm{\mathfrak{U}}}, {\bm{\mathfrak{U}}})=R_{e}^{-1}\|\nabla\bm{u}\|+ \S \|\bm{E}+\bm{u}\times {\bm{B}} ^{-}\|^{2}$. Therefore the bilinear form $\bm{a}_{s, L}(\cdot,\cdot)$ is coercive on  $\bm{X}_{h}^{00}$. 

Combining the boundedness of the variational form, the inf-sup condition of $\bm{b}_{s}(\cdot, \cdot)$ (Lemma \ref{lem:inf-sup-b-s}) and the coercivity of $\bm{a}_{s, L}(\cdot,\cdot)$ on $\bm{X}_{h}^{00}$, we complete the proof.

\end{proof}

From the triangular inequality and  Hölder's inequality, we have
\begin{align*}
\|\bm{E}\|\leq \|\bm{E}+\bm{u}\times \bm{B}^{-}\|+\|\bm{u}\times \bm{B}^{-}\|\lesssim \|\bm{E}+\bm{u}\times \bm{B}^{-}\|+\|\bm{u}\|_{1}\| \bm{B}^{-}\|_{0, 3}.
\end{align*}
In  Picard iterations (Algorithm \ref{alg:picard-s}), function $\bm{B}^{-}$ is given by the magnetic field from the previous iterative step, i.e. $\bm{B}^{-}=\bm{B}^{n-1}$. We have the following estimate:
\begin{align}\label{E-bound}
\|\bm{B}^{-}\|_{0, 3}=\|\bm{B}^{n-1}\|_{0, 3}\lesssim \|\nabla_{h}\times \bm{B}^{n-1}\|\lesssim \left \|\bm{f}\right \|_{-1},
\end{align}
where the last equality is due to the energy law.

Therefore the $L^{2}$ norm of the electric field $\bm{E}$ can be bounded by $\|(\bm{u}, \bm{E}, \bm{B})\|_{\bm{X}}$ and given data, i.e., norm $\|(\bm{u}, \bm{E}, \bm{B})\|_{\bm{X}}$ is equivalent to the decoupled norm 
$$
\left ( \|\bm{u}\|_{1}^{2}+\|\bm{E}\|^{2}_{\curl}+\|\bm{B}\|_{\div}^{2}\right )^{\frac{1}{2}}.
$$
The constants involved in the equivalence depend on $\|\bm{B}^{-}\|_{0, 3}$ which further depends on $\|\bm{f}\|_{-1}$.


\subsection{Schemes without magnetic Lagrange multipliers}

Thanks to the structure-preserving properties of the discrete de Rham complex, we can design a finite element scheme for stationary MHD problems without using magnetic multipliers. The resulting scheme is equivalent to \eqref{fem-discretization}, therefore magnetic Gauss's law is precisely preserved.  

Consider the following weak form:
\begin{problem}\label{prob:continuous--no-multiplier}
Find $(\bm{u}_{h}, \bm{E}_{h}, \bm{B}_{h})\in \bm{X}_{h}$ and $ p_{h}\in Q_{h}$, such that for any
 $(\bm{v}, \bm{F}, \bm{C})\in \bm{X}_{h} $ and $ q\in Q_{h}$,
\begin{equation}\label{eqn:continuous--no-multiplier}
\left \{
\begin{aligned}
 L(\bm{u}_{h}; \bm{u}_{h}, \bm{v})
  + R_{e}^{-1} (\nabla \bm{u}_{h}, \nabla \bm{v})  
  - \S (\bm{j}_{h}\times \bm{B}_{h},\bm{v} ) - (p_{h},\nabla\cdot \bm{v}) 
 &= \langle \bm{f},\bm{v} \rangle,\\
\S(\bm{j}_{h},\bm{F}) - \alpha ( \bm{B}_{h}, \nabla\times \bm{F}) &=0, \\
 \alpha (\nabla\times \bm{E}_{h}, \bm{C})+\alpha(\nabla\cdot\bm{B}_{h}, \nabla\cdot\bm{C}) &= 0, \\ 
 -(\nabla\cdot \bm{u}_{h}, q) &=0,
\end{aligned}
\right .
\end{equation}
where $\bm{j}_{h}$ is given from Ohm's law: 
$\bm{j}_{h} = \bm{E}_{h} + \bm{u}_{h}\times \bm{B}_{h} $.
\end{problem}

Compared with Problem \ref{prob:continuous}, the magnetic Lagrange multiplier has been removed and we augment the variational formulation by introducing   $(\nabla\cdot\bm{B}_{h}, \nabla\cdot\bm{C})$ term. Next we verify some properties of the proposed schemes. 
\begin{theorem}
Any solution to Problem \ref{prob:continuous--no-multiplier} satisfies
\begin{enumerate}
\item magnetic Gauss's law in the strong sense:
$$
\nabla\cdot\bm{B}_{h}=0,
$$
\item
the discrete energy law: 
$$
{R_{e}^{-1}}\|\nabla\bm{u}_{h}\|^{2}+\S\|\bm{j}_{h}\|^{2}=\langle \bm{f}, \bm{u}_{h}\rangle,
$$
and
$$
\frac{1}{2}R_{e}^{-1}\|\nabla\bm{u}_{h}\|^{2}+\S\|\bm{j}_{h}\|^{2}\leq \frac{R_{e}}{2}\|\bm{f}\|_{-1}^{2}.
$$
\end{enumerate}
\end{theorem}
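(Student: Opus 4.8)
The plan is to mirror the argument for Theorem~\ref{stability_fem}, the only genuinely new ingredient being the decoupling of the augmented term $\alpha(\nabla\cdot\bm{B}_{h}, \nabla\cdot\bm{C})$ from the curl term $\alpha(\nabla\times\bm{E}_{h}, \bm{C})$ in the third equation of \eqref{eqn:continuous--no-multiplier}. The essential structural fact is the exactness of the discrete de Rham complex in Figure~\ref{exact-sequence}: since $\bm{E}_{h}\in H_{0}^{h}(\curl,\Omega)$, its discrete curl satisfies $\nabla\times\bm{E}_{h}\in H_{0}^{h}(\div,\Omega)$ and, moreover, $\nabla\cdot(\nabla\times\bm{E}_{h})=0$ holds \emph{exactly} at the discrete level.

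For the strong Gauss's law, I would first take the admissible test function $\bm{C}=\nabla\times\bm{E}_{h}$ in the third equation. Because $\nabla\cdot(\nabla\times\bm{E}_{h})=0$, the augmented divergence term drops out and we are left with $\alpha\|\nabla\times\bm{E}_{h}\|^{2}=0$, so $\nabla\times\bm{E}_{h}=0$. With this, the third equation collapses to $\alpha(\nabla\cdot\bm{B}_{h},\nabla\cdot\bm{C})=0$ for every $\bm{C}\in H_{0}^{h}(\div,\Omega)$; choosing $\bm{C}=\bm{B}_{h}$ yields $\alpha\|\nabla\cdot\bm{B}_{h}\|^{2}=0$ and hence $\nabla\cdot\bm{B}_{h}=0$ pointwise. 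This shows that the augmented scheme preserves magnetic Gauss's law exactly as the Lagrange-multiplier formulation of Problem~\ref{prob:continuous} does.

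For the energy identity, I would test the first equation with $\bm{v}=\bm{u}_{h}$. The convection term vanishes by skew-symmetry, $L(\bm{u}_{h};\bm{u}_{h},\bm{u}_{h})=0$, and the pressure term vanishes after testing the fourth equation with $q=p_{h}$, giving $(\nabla\cdot\bm{u}_{h},p_{h})=0$. The Lorentz force is rewritten via the scalar triple product identity as $-\S(\bm{j}_{h}\times\bm{B}_{h},\bm{u}_{h})=\S(\bm{j}_{h},\bm{u}_{h}\times\bm{B}_{h})$; inserting Ohm's law $\bm{u}_{h}\times\bm{B}_{h}=\bm{j}_{h}-\bm{E}_{h}$ turns this into $\S\|\bm{j}_{h}\|^{2}-\S(\bm{j}_{h},\bm{E}_{h})$. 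Testing the second equation with $\bm{F}=\bm{E}_{h}$ and using $\nabla\times\bm{E}_{h}=0$ gives $\S(\bm{j}_{h},\bm{E}_{h})=0$, so the Lorentz contribution is exactly $\S\|\bm{j}_{h}\|^{2}$. Collecting terms produces the first energy law $R_{e}^{-1}\|\nabla\bm{u}_{h}\|^{2}+\S\|\bm{j}_{h}\|^{2}=\langle\bm{f},\bm{u}_{h}\rangle$, and the second follows verbatim from the Young's inequality step already used in Theorem~\ref{stability_fem}.

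The one point that requires care — and the only place where this differs from a purely formal repetition of Theorem~\ref{stability_fem} — is the decoupling in the second paragraph: one must verify that $\nabla\times\bm{E}_{h}$ is a legitimate element of $H_{0}^{h}(\div,\Omega)$ and that its discrete divergence is identically zero, which is precisely the structure-preserving property of the complex. Everything downstream is then routine, and in particular establishes the claimed equivalence with \eqref{fem-discretization}.
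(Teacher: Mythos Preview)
Your proof is correct and follows essentially the same approach as the paper. The paper's own argument is terser: for the Gauss law it takes $\bm{C}=\nabla\times\bm{E}_{h}$ to obtain $\nabla\times\bm{E}_{h}=0$ and then reads off $(\nabla\cdot\bm{B}_{h},\nabla\cdot\bm{C})=0$ for all admissible $\bm{C}$, while for the energy law it simply says the argument is ``almost the same'' as Theorem~\ref{stability_fem}; your more detailed expansion of the Lorentz-force cancellation via the triple product and Ohm's law is exactly what underlies that reference.
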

\begin{proof}
The proof of the discrete energy law is almost the same as Problem
 \ref{prob:continuous}. Therefore we only prove the magnetic Gauss's law. 

Taking $\bm{C}=\nabla\times \bm{E}_{h}$  in \eqref{eqn:continuous--no-multiplier},  we have $\nabla\times \bm{E}_{h}=0$. Therefore 
$$
(\nabla\cdot\bm{B}_{h}, \nabla\cdot\bm{C}_{h})=0, \quad\forall \bm{C}_{h}\in H^{h}_{0}(\div 0, \Omega).
$$
This implies that $\nabla\cdot\bm{B}_{h}=0$.

\end{proof}

To verify the well-posedness, we can formally eliminate  $\bm{E}_{h}$  to get a system with $\bm{u}_{h}$, $p_{h}$ and $\bm{B}$. For the  Lagrange multiplier $p_{h}$, one can verify the inf-sup condition of the  $(\nabla\cdot\bm{u}, q)$ pair.  
We can also verify the boundedness and coercivity in $\bm{V}_{h}^{0}\times H^{h}_{0}(\curl, \Omega)\times H^{h}_{0}(\div, \Omega)$ for other terms. Consequently, we have the well-posedness result:
\begin{theorem}
Problem \ref{prob:continuous--no-multiplier} has at least one solution $(\bm{u}_{h},  \bm{E}_{h}, \bm{B}_{h}, p_{h})\in \bm{X}_{h}\times Q_{h}$. With suitable data, the solution is unique. 
\end{theorem}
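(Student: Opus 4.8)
The plan is to mirror the argument used for Problem \ref{prob:mixed-picard} (Theorem \ref{thm:wellposed-stationary}), with the magnetic Lagrange multiplier replaced by the augmentation term. First I would formally eliminate $\bm{E}_h$. Testing the second equation of \eqref{eqn:continuous--no-multiplier} against $\bm{F}\in H_0^h(\curl,\Omega)$, recalling $\alpha=\S R_m^{-1}$, the definition \eqref{dis_curl} of $\nabla_h\times$, and the projection $\mathbb{P}$, yields $\bm{E}_h = R_m^{-1}\nabla_h\times\bm{B}_h - \mathbb{P}(\bm{u}_h\times\bm{B}_h)$. Substituting this back and using the splitting \eqref{penalty-curl}, the system reduces to a nonlinear saddle-point problem for $(\bm{u}_h,\bm{B}_h)\in\bm{W}_h$ and $p_h\in Q_h$ whose bilinear constraint is now only the velocity--pressure pairing $-(\nabla\cdot\bm{u}_h,q_h)$, and whose trilinear form $\tilde{\bm a}_s$ agrees with $\bm a_s$ of Section \ref{sec:vatiational-formulation} except that the term $(r_h,\nabla\cdot\bm{C}_h)$ is dropped and the augmentation $\alpha(\nabla\cdot\bm{B}_h,\nabla\cdot\bm{C}_h)$ is added.

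I would then apply the Brezzi theory together with the fixed point theorem of Girault--Raviart. Boundedness is established exactly as in Lemma \ref{lem:boudnedness-stationary-BE}: the critical terms involving $\bm{u}_h\times\bm{B}_h$ and $(\nabla_h\times\bm{B}_h,\bm{B}_h\times\bm{v}_h)$ are controlled by H\"older's inequality, the $L^3$ estimate (Theorem \ref{L3estimate}), and the embedding \eqref{def:C1}, while the augmentation term is trivially bounded by $\|\nabla\cdot\bm{B}_h\|\,\|\nabla\cdot\bm{C}_h\|$. The inf-sup condition is now simpler than Lemma \ref{lem:inf-supb-BE-stationary}: since the constraint involves only $\bm{u}_h$, choosing $\bm{C}_h=0$ reduces it to the stability of the chosen Stokes pair $(\bm{V}_h,Q_h)$ (e.g.\ Taylor--Hood), which holds by assumption.

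The main obstacle will be coercivity. The kernel of the constraint is now $\bm{V}_h^0\times H_0^h(\div,\Omega)$, with $\bm{B}_h$ \emph{unconstrained} in divergence. On this kernel the diagonal evaluates to
\[
\tilde{\bm a}_s(\bm{\xi}_h;\bm{\xi}_h,\bm{\xi}_h)=R_e^{-1}\|\nabla\bm{u}_h\|^2+\S\|(I-\mathbb{P})(\bm{u}_h\times\bm{B}_h)\|^2+\S R_m^{-2}\|\nabla_h\times\bm{B}_h\|^2+\alpha\|\nabla\cdot\bm{B}_h\|^2.
\]
Unlike Lemma \ref{lem:a-BE-stationary}, where the discrete Poincar\'e inequality $\|\bm{B}_h\|\lesssim\|\nabla_h\times\bm{B}_h\|$ was available on the divergence-free subspace $H_0^h(\div0,\Omega)$, here I must control $\|\bm{B}_h\|$ over all of $H_0^h(\div,\Omega)$. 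This is precisely where the augmentation is needed: on a contractible domain the exactness of the discrete de Rham complex (Figure \ref{exact-sequence}) furnishes the full discrete Friedrichs inequality $\|\bm{B}_h\|\lesssim\|\nabla_h\times\bm{B}_h\|+\|\nabla\cdot\bm{B}_h\|$, which I would prove via the discrete Hodge decomposition, splitting $\bm{B}_h$ into a divergence-free part and a discrete gradient part and bounding each. Combining this with the two coercive contributions above gives $\tilde{\bm a}_s(\bm{\xi}_h;\bm{\xi}_h,\bm{\xi}_h)\gtrsim\|\bm{u}_h\|_1^2+\|\bm{B}_h\|_d^2=\|\bm{\xi}_h\|_{\bm W}^2$.

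Existence of a discrete solution then follows from the fixed point theorem (the coercivity verifies the required lower bound, and sequential weak continuity is automatic in finite dimensions), and recovering $\bm{E}_h$ from the eliminated relation gives a solution of Problem \ref{prob:continuous--no-multiplier}. For uniqueness, I would restrict to the kernel $\bm{V}_h^0\times H_0^h(\div,\Omega)$, reducing the system to a single nonlinear equation, and run a standard contraction argument: using the a priori energy bounds, when $\|\bm{f}\|_{-1}$ is small relative to $R_e,R_m,\S$ the solution map is a contraction, which yields uniqueness for suitable data.
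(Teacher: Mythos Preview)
Your proposal is correct and follows the paper's brief sketch preceding the theorem: eliminate $\bm{E}_h$, verify the Stokes inf-sup for the sole remaining multiplier $p_h$, and establish coercivity of the reduced trilinear form on the kernel $\bm{V}_h^0\times H_0^h(\div,\Omega)$. The paper does not spell out the coercivity step, and your identification of the discrete Friedrichs inequality $\|\bm{B}_h\|\lesssim\|\nabla_h\times\bm{B}_h\|+\|\nabla\cdot\bm{B}_h\|$ (obtained from the discrete Hodge decomposition on the contractible domain) as the replacement for the Poincar\'e inequality used in Lemma~\ref{lem:a-BE-stationary} is precisely the new ingredient required once the magnetic divergence constraint is traded for the augmentation term.
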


We can similarly define Picard iterations: For $n=1, 2, \cdots$, given $\left (\bm{u}^{n-1}, \bm{B}^{n-1}\right )\in \bm{W}_{h}$,  find $(\bm{u}^{n}, \bm{E}^{n}, \bm{B}^{n})\in \bm{X}_{h}$ and $ p^{n}\in Q_{h}$, such that for any
 $(\bm{v}, \bm{F}, \bm{C})\in \bm{X}_{h} $ and $ q\in Q_{h}$,
\begin{equation}\label{eqn:continuous--no-multiplier-picard}
\left \{
\begin{aligned}
 L(\bm{u}^{n-1}; \bm{u}^{n}, \bm{v})
  + R_{e}^{-1} (\nabla \bm{u}^{n}, \nabla \bm{v})  
  - \S (\bm{j}^{n}_{n-1}\times \bm{B}^{n-1},\bm{v} ) - (p^{n},\nabla\cdot \bm{v}) 
 &= \langle \bm{f},\bm{v} \rangle,\\
\S(\bm{j}^{n}_{n-1},\bm{F}) - \alpha ( \bm{B}^{n}, \nabla\times \bm{F}) &=0, \\
 \alpha (\nabla\times \bm{E}^{n}, \bm{C})+\alpha(\nabla\cdot\bm{B}^{n}, \nabla\cdot\bm{C}) &= 0, \\ 
 -(\nabla\cdot \bm{u}^{n}, q) &=0,
\end{aligned}
\right .
\end{equation}
where $\bm{j}^{n}_{n-1}$ is given by  Ohm's law: 
$\bm{j}^{n}_{n-1} = \bm{E}^{n} + \bm{u}^{n}\times \bm{B}^{n-1} $.
one can similarly verify the following properties:
\begin{theorem}
Any solution to Problem \ref{eqn:continuous--no-multiplier-picard} satisfies
\begin{enumerate}
\item magnetic Gauss's law in the strong sense:
$$
\nabla\cdot\bm{B}^{n}=0,\quad n=1, 2, \cdots,
$$
\item
the discrete energy law: 
$$
{R_{e}^{-1}}\|\nabla\bm{u}^{n}\|^{2}+\S\|\bm{j}^{n}_{n-1}\|^{2}=\langle \bm{f}, \bm{u}^{n}\rangle,
$$
and
$$
\frac{1}{2}R_{e}^{-1}\|\nabla\bm{u}^{n}\|^{2}+\S\|\bm{j}^{n}_{n-1}\|^{2}\leq \frac{R_{e}}{2}\|\bm{f}\|_{-1}^{2}.
$$
\end{enumerate}
\end{theorem}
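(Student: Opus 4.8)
The plan is to follow the same structure-preserving argument used for Problem~\ref{prob:continuous--no-multiplier} and for the energy estimates of Algorithm~\ref{alg:picard-s}, since the Picard-linearized equations in \eqref{eqn:continuous--no-multiplier-picard} share exactly the same algebraic structure; only the frozen coefficients $\bm{u}^{n-1}$ and $\bm{B}^{n-1}$ differ. First I would establish the magnetic Gauss's law, then deduce the energy identity from a suitable choice of test functions, and finally extract the a priori bound by Young's inequality.

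For the Gauss's law I would test the third equation of \eqref{eqn:continuous--no-multiplier-picard} with $\bm{C}=\nabla\times\bm{E}^{n}$, which is admissible since $\curl$ maps $H^{h}_{0}(\curl,\Omega)$ into $H^{h}_{0}(\div,\Omega)$ in the discrete de Rham sequence. Because $\div\circ\curl=0$ on this complex, $\nabla\cdot(\nabla\times\bm{E}^{n})=0$, so the augmented term drops and we are left with $\alpha\|\nabla\times\bm{E}^{n}\|^{2}=0$, i.e. $\nabla\times\bm{E}^{n}=0$. The third equation then reduces to $(\nabla\cdot\bm{B}^{n},\nabla\cdot\bm{C})=0$ for all $\bm{C}\in H^{h}_{0}(\div,\Omega)$; using $\nabla\cdot H^{h}_{0}(\div,\Omega)=L^{2,h}_{0}(\Omega)$ together with $\nabla\cdot\bm{B}^{n}\in L^{2,h}_{0}(\Omega)$, I would choose $\bm{C}$ with $\nabla\cdot\bm{C}=\nabla\cdot\bm{B}^{n}$ to conclude $\nabla\cdot\bm{B}^{n}=0$.

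For the energy identity I would take $\bm{v}=\bm{u}^{n}$, $\bm{F}=\bm{E}^{n}$, $\bm{C}=\bm{B}^{n}$ and $q=p^{n}$ and add the four equations. The antisymmetry built into $L(\bm{w};\cdot,\cdot)$ gives $L(\bm{u}^{n-1};\bm{u}^{n},\bm{u}^{n})=0$, and the incompressibility constraint cancels the pressure coupling $(p^{n},\nabla\cdot\bm{u}^{n})$. Using the scalar triple product identity and Ohm's law $\bm{u}^{n}\times\bm{B}^{n-1}=\bm{j}^{n}_{n-1}-\bm{E}^{n}$, the Lorentz term rewrites as
$$
-\S(\bm{j}^{n}_{n-1}\times\bm{B}^{n-1},\bm{u}^{n})=\S\|\bm{j}^{n}_{n-1}\|^{2}-\S(\bm{j}^{n}_{n-1},\bm{E}^{n}).
$$
The coupling remainder $\S(\bm{j}^{n}_{n-1},\bm{E}^{n})$ is exactly what the induction equation (tested with $\bm{E}^{n}$) identifies with $\alpha(\nabla\times\bm{E}^{n},\bm{B}^{n})$, while the constraint equation tested with $\bm{B}^{n}$ together with $\nabla\cdot\bm{B}^{n}=0$ forces $\alpha(\nabla\times\bm{E}^{n},\bm{B}^{n})=0$; hence the remainder vanishes and the summed equations collapse to $R_{e}^{-1}\|\nabla\bm{u}^{n}\|^{2}+\S\|\bm{j}^{n}_{n-1}\|^{2}=\langle\bm{f},\bm{u}^{n}\rangle$.

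The remaining inequality is immediate: bounding $\langle\bm{f},\bm{u}^{n}\rangle\le\|\bm{f}\|_{-1}\|\nabla\bm{u}^{n}\|$ and applying Young's inequality to absorb $\tfrac{1}{2R_{e}}\|\nabla\bm{u}^{n}\|^{2}$ on the left yields the claimed bound. I expect no genuine obstacle here; the only point requiring care is bookkeeping the cancellation of the magnetic coupling term $\S(\bm{j}^{n}_{n-1},\bm{E}^{n})$, which hinges on the structure-preserving identity $(\nabla\times\bm{E}^{n},\bm{B}^{n})=0$ and therefore on having established $\nabla\cdot\bm{B}^{n}=0$ beforehand.
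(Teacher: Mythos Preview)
Your proposal is correct and follows precisely the structure the paper intends: the theorem is stated without proof, with the remark that ``one can similarly verify'' it by mimicking the arguments for Theorem~\ref{stability_fem} (and its no-multiplier variant) together with the energy estimate for Algorithm~\ref{alg:picard-s}. Your choice of test functions and the cancellation mechanism match those proofs exactly.

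One small simplification: once you have shown $\nabla\times\bm{E}^{n}=0$ in Part~1, the vanishing of the coupling remainder $\S(\bm{j}^{n}_{n-1},\bm{E}^{n})=\alpha(\bm{B}^{n},\nabla\times\bm{E}^{n})$ is immediate from the second equation alone; you need not re-invoke the third equation and $\nabla\cdot\bm{B}^{n}=0$ to see that $(\nabla\times\bm{E}^{n},\bm{B}^{n})=0$. This is purely cosmetic and does not affect the validity of your argument.
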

Analogous to Theorem \ref{thm:wellposed-stationary}, we can verify the well-posedness:
\begin{theorem}
Variational form \eqref{eqn:continuous--no-multiplier-picard} has a unique solution $(\bm{u}^{n}, \bm{E}^{n},  \bm{B}^{n}, p^{n})\in \bm{X}_{h}\times Q_{h}$. 
\end{theorem}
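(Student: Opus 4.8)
The plan is to exploit the fact that \eqref{eqn:continuous--no-multiplier-picard} is a \emph{linear} square system for $(\bm{u}^{n}, \bm{E}^{n}, \bm{B}^{n}, p^{n})$ over the finite-dimensional space $\bm{X}_{h}\times Q_{h}$: the previous iterate $(\bm{u}^{n-1},\bm{B}^{n-1})$ enters only as fixed coefficients, so every term is linear in the unknowns. In finite dimensions existence and uniqueness are equivalent, and a mixed problem is solvable precisely when the constraint form satisfies an inf-sup condition and the principal form is non-degenerate on its kernel; hence it suffices to show that the homogeneous problem (set $\bm{f}=\bm{0}$) has only the trivial solution. I would cast the system in the Brezzi form used throughout the section, with principal form $\bm{a}_{s,L}$ augmented by $\alpha(\nabla\cdot\bm{B},\nabla\cdot\bm{C})$ and with $p^{n}\in Q_{h}$ as the sole Lagrange multiplier coupled through $-(\nabla\cdot\bm{v},q)$.

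The routine ingredients are inherited from the preceding analysis. Boundedness of all terms follows from the norm definitions, the Sobolev embedding \eqref{def:C1}, and the key $L^{3}$ estimate of Theorem \ref{L3estimate}, which controls the cross terms through $\|\bm{u}\|_{0,6}\|\bm{B}^{-}\|_{0,3}\lesssim \|\nabla\bm{u}\|\,\|\nabla_{h}\times\bm{B}^{-}\|$ (here $\bm{B}^{-}=\bm{B}^{n-1}\in H_{0}^{h}(\div 0,\Omega)$ by the magnetic Gauss's law of the previous step). The inf-sup condition for the $(\nabla\cdot\bm{v},q)$ pairing is exactly the one in Lemma \ref{lem:inf-sup-b-s}, and it will immediately force $p^{n}=0$ once the remaining unknowns are shown to vanish.

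The heart of the argument is injectivity of the augmented form on the kernel $\bm{V}_{h}^{0}\times H_{0}^{h}(\curl,\Omega)\times H_{0}^{h}(\div,\Omega)$. Testing the first three homogeneous equations against $(\bm{u}^{n},\bm{E}^{n},\bm{B}^{n})$, discarding the pressure term via the fourth equation with $q=p^{n}$, and observing that the convection term and the pair $-\alpha(\bm{B}^{n},\nabla\times\bm{E}^{n})+\alpha(\nabla\times\bm{E}^{n},\bm{B}^{n})$ cancel by antisymmetry, one is left with
\begin{align*}
R_{e}^{-1}\|\nabla\bm{u}^{n}\|^{2}+\S\|\bm{E}^{n}+\bm{u}^{n}\times\bm{B}^{n-1}\|^{2}+\alpha\|\nabla\cdot\bm{B}^{n}\|^{2}=0 .
\end{align*}
This yields $\bm{u}^{n}=\bm{0}$ (Poincaré on $H_{0}^{1}$), then $\bm{E}^{n}=-\bm{u}^{n}\times\bm{B}^{n-1}=\bm{0}$, and $\nabla\cdot\bm{B}^{n}=0$. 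The one genuinely new point — and the step I expect to be the main obstacle — is recovering $\bm{B}^{n}=\bm{0}$: unlike the multiplier-based scheme, the penalty term exposes only $\|\nabla\cdot\bm{B}^{n}\|$ and not $\|\bm{B}^{n}\|$ on the diagonal, so coercivity in the full $\|\cdot\|_{\bm{X}}$ norm is simply unavailable and a single energy identity cannot close the estimate. Instead I would use a second test function $(\bm{0},\bm{F},\bm{0})$ in the second equation; since $\bm{E}^{n}+\bm{u}^{n}\times\bm{B}^{n-1}=\bm{0}$ this gives $\alpha(\bm{B}^{n},\nabla\times\bm{F})=0$ for all $\bm{F}$, which by the definition \eqref{dis_curl} of the discrete curl reads $(\nabla_{h}\times\bm{B}^{n},\bm{F})=0$, and the choice $\bm{F}=\nabla_{h}\times\bm{B}^{n}$ forces $\nabla_{h}\times\bm{B}^{n}=\bm{0}$.

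Finally, with $\nabla\cdot\bm{B}^{n}=0$ the field $\bm{B}^{n}$ lies in $H_{0}^{h}(\div 0,\Omega)$ and has vanishing discrete curl, so the discrete Poincaré inequality $\|\bm{B}^{n}\|\lesssim\|\nabla_{h}\times\bm{B}^{n}\|$ of Lemma \ref{lem:a-BE-stationary} gives $\bm{B}^{n}=\bm{0}$; the inf-sup condition then delivers $p^{n}=0$. Thus the homogeneous problem admits only the zero solution, and by finite-dimensionality the linear scheme \eqref{eqn:continuous--no-multiplier-picard} is uniquely solvable. The whole difficulty is concentrated in annihilating $\bm{B}^{n}$: because the augmentation penalizes only the divergence, one must replace coercivity by the combination of the discrete curl identity and the discrete Poincaré inequality, an argument that relies essentially on the exactness of the discrete de Rham sequence.
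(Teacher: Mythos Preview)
Your argument is correct. The paper itself offers no proof beyond the pointer ``Analogous to Theorem~\ref{thm:wellposed-stationary}'', which would have you formally eliminate $\bm{E}^{n}$ via the second equation (so that $\bm{E}^{n}=R_{m}^{-1}\nabla_{h}\times\bm{B}^{n}-\mathbb{P}(\bm{u}^{n}\times\bm{B}^{n-1})$), pass to a reduced bilinear form on $\bm{V}_{h}\times H_{0}^{h}(\div,\Omega)$ now carrying both $\S R_{m}^{-2}\|\nabla_{h}\times\bm{B}\|^{2}$ and the augmentation $\alpha\|\nabla\cdot\bm{B}\|^{2}$, and then invoke genuine coercivity in the $\|\cdot\|_{\bm{W}}$ norm on the kernel $\bm{V}_{h}^{0}\times H_{0}^{h}(\div,\Omega)$ (the combination of $\|\nabla_{h}\times\bm{B}\|$ and $\|\nabla\cdot\bm{B}\|$ controls $\|\bm{B}\|$ there). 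Your route is different: you keep the full $(\bm{u},\bm{E},\bm{B},p)$ system, correctly note that the single energy identity cannot close because the augmentation exposes only $\|\nabla\cdot\bm{B}^{n}\|$, and recover $\bm{B}^{n}=\bm{0}$ by a second test in the Ohm--Amp\`ere relation together with the discrete Poincar\'e inequality on $H_{0}^{h}(\div 0,\Omega)$. This is more elementary for the linear Picard step---it avoids the $(I-\mathbb{P})$ penalty term and the reduced-space machinery entirely---and makes the role of the discrete de~Rham exactness explicit; the paper's reduction, by contrast, plugs directly into the Brezzi framework of Lemmas~\ref{lem:boudnedness-stationary-BE}--\ref{lem:a-BE-stationary} and yields the well-posedness constants in one stroke. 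One minor point: the inf-sup condition you need is only the Stokes one for $\bm{V}_{h}\times Q_{h}$, which is the \emph{starting ingredient} in the proof of Lemma~\ref{lem:inf-sup-b-s} rather than the lemma itself.
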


\section{Convergence of Picard iterations}

\begin{theorem}
\label{thm_picard_conv}
If both $R_{e}^{2}\Vert \bm{f} \Vert_{-1}$ and $R_{e} R_{m}^{\frac{3}{2}} \Vert \bm{f} \Vert_{-1}$  are small enough, 
then the method \eqref{fem-discretization} (Problem~\ref{prob:continuous}) with the boundary condition (\ref{boundary_cond}) 
has a unique solution, and the solution of the Picard 
iteration (Algorithm~\ref{alg:picard-s}) converges to it with respect to 
the norms defined by \eqref{stationary-BE-norm-X} and \eqref{norm-Y}.
\end{theorem}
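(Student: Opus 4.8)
The plan is to establish this by a contraction (Banach fixed-point) argument for the Picard map, working in the reduced $(\bm{u},\bm{B})$ variables. First I would record the ingredients already in hand. Each step of Algorithm~\ref{alg:picard-s} is well-posed by Theorem~\ref{thm:wellposed-picard}, so the map $\Phi:(\bm{u}^{n-1},\bm{B}^{n-1})\mapsto(\bm{u}^{n},\bm{B}^{n})$ is well defined on $\bm{W}_{h}$. The energy estimate \eqref{energy-E}, together with \eqref{energy-3} and the $L^{3}$ bound \eqref{E-bound}, gives bounds uniform in $n$: $\|\nabla\bm{u}^{n}\|\lesssim R_{e}\|\bm{f}\|_{-1}$, $\|\nabla_{h}\times\bm{B}^{n}\|\lesssim R_{e}^{1/2}R_{m}\S^{-1/2}\|\bm{f}\|_{-1}$, and hence, by Theorem~\ref{L3estimate}, $\|\bm{B}^{n}\|_{0,3}\lesssim R_{e}^{1/2}R_{m}\S^{-1/2}\|\bm{f}\|_{-1}$. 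All iterates therefore lie in a fixed ball of $\bm{W}_{h}$, on which I aim to show $\Phi$ contracts in $\|\cdot\|_{\bm{W}}$.

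Next I would form the error equations. Set $\delta\bm{u}=\bm{u}^{n+1}-\bm{u}^{n}$, $\delta\bm{B}=\bm{B}^{n+1}-\bm{B}^{n}$ and the lagged increments $\delta\bm{u}^{-}=\bm{u}^{n}-\bm{u}^{n-1}$, $\delta\bm{B}^{-}=\bm{B}^{n}-\bm{B}^{n-1}$, and subtract the reduced Picard system (the $\bm{E}$-eliminated form of \eqref{picard1}--\eqref{picard5}, of the type \eqref{BE-stationary}) at level $n$ from that at level $n+1$. Since the constraint block is linear, testing with $(\delta\bm{u},\delta\bm{B})\in\bm{W}_{h}^{00}$ annihilates the incremental pressure and the divergence terms. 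For the convection term I use $L(\bm{u}^{n};\bm{u}^{n+1},\bm{v})-L(\bm{u}^{n-1};\bm{u}^{n},\bm{v})=L(\bm{u}^{n};\delta\bm{u},\bm{v})+L(\delta\bm{u}^{-};\bm{u}^{n},\bm{v})$, and I decompose the Lorentz and induction couplings analogously, so that every difference splits into a part carrying the current increment $(\delta\bm{u},\delta\bm{B})$ and a part carrying the lagged increment $(\delta\bm{u}^{-},\delta\bm{B}^{-})$.

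Third, testing with $(\delta\bm{u},\delta\bm{B})$ the antisymmetry $L(\bm{w};\bm{v},\bm{v})=0$ removes the advective self-term, and the current-increment parts reproduce the coercive quantity $R_{e}^{-1}\|\nabla\delta\bm{u}\|^{2}+\S R_{m}^{-2}\|\nabla_{h}\times\delta\bm{B}\|^{2}$, which by the discrete Poincar\'e inequality on $\bm{W}_{h}^{00}$ (as in Lemma~\ref{lem:a-BE-stationary}) dominates $\gamma\|(\delta\bm{u},\delta\bm{B})\|_{\bm{W}}^{2}$. The lagged parts are moved to the right-hand side and estimated by the generalized H\"older inequality, the Sobolev embedding \eqref{def:C1}, and Theorem~\ref{L3estimate} applied to $\delta\bm{B}$, $\delta\bm{B}^{-}$ and $\bm{B}^{n}$; the uniform bounds above quantify the coefficients. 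This yields $\|(\delta\bm{u},\delta\bm{B})\|_{\bm{W}}\le\kappa\,\|(\delta\bm{u}^{-},\delta\bm{B}^{-})\|_{\bm{W}}$, where dividing by $\gamma$ makes the convection contribution scale like $R_{e}^{2}\|\bm{f}\|_{-1}$ and the Lorentz/induction contribution like $R_{e}R_{m}^{3/2}\|\bm{f}\|_{-1}$, so the two smallness hypotheses are precisely what force $\kappa<1$.

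Finally, the contraction makes $\{(\bm{u}^{n},\bm{B}^{n})\}$ a Cauchy sequence in $\bm{W}_{h}$ whose limit $(\bm{u}_{h},\bm{B}_{h})$ solves the reduced nonlinear system \eqref{BE-stationary}; recovering $\bm{E}_{h}$ from \eqref{fem-2}, the pressure increment from the inf-sup condition (Lemma~\ref{lem:inf-sup-b-s}), and $r_{h}=0$, and invoking the norm equivalence noted after Theorem~\ref{thm:wellposed-picard}, upgrades the convergence to the full $\bm{X}$-norm \eqref{stationary-BE-norm-X} together with \eqref{norm-Y}. Uniqueness for Problem~\ref{prob:continuous} follows from the very same estimate applied to the difference of two solutions, each being a fixed point of $\Phi$. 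I expect the main obstacle to be the power counting in $R_{e},R_{m},\S$ inside the lagged Lorentz and induction terms: because $\bm{B}_{h}$ is only $H(\div)$-conforming and enjoys no pointwise $H^{1}$ control, it must be handled entirely through its discrete curl via Theorem~\ref{L3estimate}, and it is this route that produces the specific threshold $R_{e}R_{m}^{3/2}\|\bm{f}\|_{-1}$; one must also check that no spurious power of the mesh size $h$ survives the estimates.
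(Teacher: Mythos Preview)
Your proposal is correct and would prove the theorem, but it takes a somewhat different route from the one the paper intends. The paper explicitly omits the proof, stating that it is ``a simpler version of the proofs of the following Theorem~\ref{energy_est}''. Following that template, the paper's argument stays in the full $(\bm{u},\bm{E},\bm{B})$ system: one subtracts two consecutive Picard steps (or two solutions, for uniqueness), tests the $\bm{v}$-equation with $e_{\bm{u}}$ and the $\bm{F}$-equation with $-\nabla_{h}\times e_{\bm{B}}$ exactly as in Lemma~\ref{energy_id}, and obtains directly the coercive quantity $R_{e}^{-1}\|\nabla e_{\bm{u}}\|^{2}+\alpha\|\nabla_{h}\times e_{\bm{B}}\|^{2}$. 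The nonlinear residuals are then controlled via Lemma~\ref{convection}, Lemma~\ref{aux1}, and Theorem~\ref{L3estimate}, and the electric increment $e_{\bm{E}}$ is eliminated a posteriori through the second estimate in Lemma~\ref{aux1}. You instead first eliminate $\bm{E}$ and work in the reduced system \eqref{BE-stationary}, relying on the coercivity already packaged in Lemma~\ref{lem:a-BE-stationary}. Both arguments hinge on the same two ingredients---the discrete $L^{3}$ bound of Theorem~\ref{L3estimate} and the uniform a~priori bounds of Theorem~\ref{stability_fem}/\eqref{energy-E}---and both deliver the same contraction factor scaling as $R_{e}^{2}\|\bm{f}\|_{-1}$ (from convection) and $R_{e}R_{m}^{3/2}\|\bm{f}\|_{-1}$ (from the Lorentz/induction coupling). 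Your route is slightly cleaner for the Picard argument because the reduced bilinear form $\bm{a}_{s}$ is manifestly coercive on $\bm{W}_{h}^{00}$ and you never have to handle $\bm{E}$ separately; the paper's route has the advantage of recycling verbatim the estimates of Theorem~\ref{energy_est}, so that the Picard convergence and the finite element error analysis are proved by a single computation.
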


We skip the proof of Theorem~\ref{thm_picard_conv}, since it is a simpler version of the proofs of the following Theorem~\ref{energy_est}.

\section{Convergence of finite element methods}
\label{sec_conv}
In this section, we present the error estimates of the method \eqref{fem-discretization}, which is for 
the boundary condition (\ref{boundary_cond}). 
Our analysis is based on the minimum regularity assumption on the exact solutions (c.f. \cite{Schotzau.D.2004a}). Namely, we assume
\begin{equation}\label{regularity}
 \bm{u} \in [H^{1 + \sigma}(\Omega)]^3, \quad \bm{B}, \nabla \times \bm{B}, \bm{E} \in [H^{\sigma}(\Omega)]^3, \quad p 
  \in H^{\sigma}(\Omega) \cap L^2_0(\Omega),
\end{equation}
here $\sigma > \frac12$. Next we introduce notations used in the analysis. For a generic unknown $\mathcal{U}$ and its numerical counterpart $\mathcal{U}_h$ we split the error as:
\[
\mathcal{U} - \mathcal{U}_h = (\mathcal{U} - \Pi \mathcal{U}) + (\Pi \mathcal{U} - \mathcal{U}_h) := \delta_{U} + e_U.
\]
Here $\Pi \mathcal{U}$ is a projection of $\mathcal{U}$ into the corresponding discrete space that $\mathcal{U}_h$ belongs to. Namely, for $(\bm{E}, r)$ we use the projections $(\Pi^{\mathrm{curl}} \bm{E}, \Pi^0 r)$ in the commuting diagram in Figure \ref{exact-sequence}. For $\bm{B}$ and $p$ we define the $L^2$ projection $\Pi^D \bm{B}, \Pi^Q p$ into $H^h_0(\div0, \Omega), Q_h$ respectively. Notice here $r = 0$ implies that $\Pi^0 r = 0$ and hence $\delta_r = 0$.  Finally, for the velocity $\bm{u}$ we define $(\Pi^V \bm{u}, \tilde{p}_h) \in \bm{V}_h \times Q_h$ be the unique numerical solution of the Stokes equation:
\begin{align}\label{proj_u1}
(\nabla \Pi^V {\bm{u}}, \nabla \bm{v}) + (\tilde{p}_{h}, \nabla \cdot \bm{v}) &= (\nabla \bm{u}, \nabla \bm{v}), \\
\label{proj_u2}
(\nabla \cdot \Pi^V \bm{u}, q) & = 0,
\end{align} 
for all $(\bm{v}, q) \in \bm{V}_h \times Q_h$. Notice that $(\bm{u}, 0)$ is the exact solution of the Stokes equations:
\begin{align*}
-\Delta \tilde{\bm{u}} + \nabla \tilde{p} & = - \Delta \bm{u}, \\
\nabla \cdot \tilde{\bm{u}} & = 0,
\end{align*}
with $\tilde{\bm{u}} = \bm{0}$ on $\partial\Omega$. 
Hence, if $\bm{V}_h \times Q_h$ is a stable Stokes pair, we should have optimal approximation for the above equation:
\begin{equation}\label{proj_u}
\|\bm{u} - \Pi^V \bm{u}\|_1 \le C \inf_{\bm{v}\in \bm{V}_{h}} \Vert \bm{u} - \bm{v}\Vert_{1}.
\end{equation}
Immediately we can see that 
\begin{equation}\label{proj_u3}
(\delta_{\bm{u}}, q) = 0 \quad \text{for all} \quad q \in Q_h.
\end{equation}

Since $\bm{B}, \Pi^D \bm{B}, \bm{B}_h \in H_0(\div0, \Omega)$ and $\bm{E}, \bm{E}_h, \Pi^{\mathrm{curl}} \bm{E} \in H_0(\curl0, \Omega)$ we have
\begin{equation}\label{div0_curl0}
\nabla \cdot e_{\bm{B}} = \nabla \cdot \delta_{\bm{B}} = 0, \quad \nabla \times e_{\bm{E}} = \nabla \times \delta_{\bm{E}} = 0.
\end{equation}
In addition, since $\nabla \times H^{h}_0(\curl, \Omega) \subset H_0^h(\div0, \Omega)$ we have
\begin{equation}\label{proj_B1}
(\delta_{\bm{B}}, \nabla \times \bm{F}) = 0 \quad \text{for all} \quad \bm{F} \in H_0^h(\curl, \Omega). 
\end{equation}
Let $\Pi^{\mathrm{div}}$ be the $H(\div)$-conforming projection in the commuting diagram 
in Figure \ref{exact-sequence}. Obviously, $\Pi^{\mathrm{div}}\bm{B} \in H^h_0(\div0, \Omega)$. 
Then, due to the construction of $\Pi^{D}$, we have 
\begin{align}
\label{proj_B2}
\Vert\Pi^{D} \bm{B} - \bm{B} \Vert = \inf_{\bm{C}\in H^h_0(\div0, \Omega)} \Vert \bm{B} - \bm{C} \Vert 
\leq \Vert \Pi^{\mathrm{div}}\bm{B} - \bm{B} \Vert 
\leq C \inf_{\bm{C}\in H^{h}_{0}(\div, \Omega)} \Vert \bm{B} - \bm{C}\Vert.
\end{align}

Now we are ready to present the error equations for the error estimates. Notice that the exact solution $(\bm{u}, \bm{E}, \bm{B}, r, p)$ also satisfies the discrete formulation \eqref{fem-discretization}. Subtracting two systems, with the spliting of the errors and above properties of the projections \eqref{proj_u3}, \eqref{div0_curl0} and \eqref{proj_B1}, we arrive at:
\begin{align}
\nonumber
 (L(\bm{u}; \bm{u}, \bm{v}) - L(\bm{u}_h; \bm{u_h}, \bm{v})) 
  + R_{e}^{-1} (\nabla e_{\bm{u}}, \nabla \bm{v})  
  &- \S (\bm{j} \times \bm{B} - \bm{j}_h \times \bm{B}_h,\bm{v} )  - (e_p,\nabla\cdot \bm{v})  \\
 \label{err-1}
&= - R_{e}^{-1} (\nabla \delta_{\bm{u}}, \nabla \bm{v}) + (\delta_p,\nabla\cdot \bm{v}),\\
\label{err-2}
\S(\bm{j} - \bm{j}_h,\bm{F}) - \alpha ( e_{\bm{B}}, \nabla\times \bm{F}) &= 0, \\
\label{err-3}
 \alpha (\nabla\times e_{\bm{E}}, \bm{C}) +(e_r, \nabla\cdot \bm{C}) &= - (\delta_r, \nabla\cdot \bm{C}), \\
 \label{err-4}
 -(\nabla\cdot e_{\bm{u}}, q) &= 0,\\
\label{err-5}
 (\nabla\cdot e_{\bm{B}}, s) &= 0, 
\end{align}
for all $(\bm{v}, \bm{F}, \bm{C})\in \bm{X}_{h} $ and $ (q, s)\in \bm{Y}_{h}$.

\begin{lemma}\label{energy_id}
We have the energy identity:
\begin{align}
\nonumber
R_e^{-1}\|\nabla e_{\bm{u}}\|^2 + \alpha \|\nabla_h \times e_{\bm{B}}\|^2 = & - (L(\bm{u}; \bm{u}, e_{\bm{u}}) - L(\bm{u}_h; \bm{u_h}, e_{\bm{u}})) + (\delta_p, \nabla \cdot e_{\bm{u}}) - R^{-1}_e (\nabla \delta_{\bm{u}}, \nabla e_{\bm{u}})  \\
\nonumber
&+ \S (\bm{j} \times \bm{B} - \bm{j}_h \times \bm{B}_h,e_{\bm{u}})   + \S(\bm{j} - \bm{j}_h, \nabla_h \times e_{\bm{B}}).
\end{align}
\end{lemma}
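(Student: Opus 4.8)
The plan is to obtain the identity purely algebraically, by testing the error equations \eqref{err-1} and \eqref{err-2} against well-chosen discrete functions and summing. The two squared quantities on the left-hand side, $R_{e}^{-1}\|\nabla e_{\bm{u}}\|^{2}$ and $\alpha\|\nabla_{h}\times e_{\bm{B}}\|^{2}$, dictate the choices: $\bm{v}=e_{\bm{u}}$ in the momentum error equation and $\bm{F}=\nabla_{h}\times e_{\bm{B}}$ in the magnetic error equation.

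First I would take $\bm{v}=e_{\bm{u}}$ in \eqref{err-1}. The viscous term becomes $R_{e}^{-1}(\nabla e_{\bm{u}},\nabla e_{\bm{u}})=R_{e}^{-1}\|\nabla e_{\bm{u}}\|^{2}$, while the convective, Lorentz-force, and consistency terms reproduce the corresponding terms of the target identity. The only quantity to be removed is the pressure contribution $-(e_{p},\nabla\cdot e_{\bm{u}})$. Since $e_{p}=\Pi^{Q}p-p_{h}\in Q_{h}$, equation \eqref{err-4} applied with $q=e_{p}$ gives $(\nabla\cdot e_{\bm{u}},e_{p})=0$, so this term drops. Rearranging then expresses $R_{e}^{-1}\|\nabla e_{\bm{u}}\|^{2}$ as exactly the velocity-type terms appearing on the right-hand side of the lemma.

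Next I would take $\bm{F}=\nabla_{h}\times e_{\bm{B}}$ in \eqref{err-2}. This is admissible: by \eqref{div0_curl0} we have $e_{\bm{B}}\in H^{h}_{0}(\mathrm{div}0,\Omega)\subset H^{h}_{0}(\div,\Omega)$, so the definition \eqref{dis_curl} gives $\nabla_{h}\times e_{\bm{B}}\in H^{h}_{0}(\curl,\Omega)$. Equation \eqref{err-2} then reads $\S(\bm{j}-\bm{j}_{h},\nabla_{h}\times e_{\bm{B}})=\alpha(e_{\bm{B}},\nabla\times(\nabla_{h}\times e_{\bm{B}}))$. Invoking \eqref{dis_curl} a second time, now with the test function $\nabla_{h}\times e_{\bm{B}}$ itself, collapses the right-hand side into $\alpha(\nabla_{h}\times e_{\bm{B}},\nabla_{h}\times e_{\bm{B}})=\alpha\|\nabla_{h}\times e_{\bm{B}}\|^{2}$. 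This identifies $\alpha\|\nabla_{h}\times e_{\bm{B}}\|^{2}$ with the single remaining term $\S(\bm{j}-\bm{j}_{h},\nabla_{h}\times e_{\bm{B}})$ of the target.

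Adding the two identities yields the claim verbatim; notably neither the electric-field error equation \eqref{err-3} nor the magnetic divergence equation \eqref{err-5} is needed. There is no genuine analytical difficulty here — the statement is a bookkeeping identity for the error system. The only steps demanding care are the double application of the discrete-curl definition \eqref{dis_curl}, which is what turns the unwieldy term $(e_{\bm{B}},\nabla\times(\nabla_{h}\times e_{\bm{B}}))$ into the clean square norm $\|\nabla_{h}\times e_{\bm{B}}\|^{2}$, and the verification that $e_{p}\in Q_{h}$, so that the orthogonality \eqref{err-4} can indeed be used to eliminate the pressure term.
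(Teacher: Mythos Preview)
Your proposal is correct and follows essentially the same approach as the paper: test \eqref{err-1} with $\bm v=e_{\bm u}$, test \eqref{err-2} with $\bm F=\pm\nabla_h\times e_{\bm B}$, and invoke \eqref{err-4} with $q=e_p$ to kill the pressure term. Your explicit double use of the discrete-curl definition \eqref{dis_curl} to convert $(e_{\bm B},\nabla\times(\nabla_h\times e_{\bm B}))$ into $\|\nabla_h\times e_{\bm B}\|^{2}$ makes the key step more transparent than the paper's one-line proof.
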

\begin{proof}
Taking $\bm{v} = e_{\bm{u}}, \bm{F} = -\nabla_h \times e_{\bm{B}}, q = e_p$ in \eqref{err-1}, \eqref{err-2} and \eqref{err-4} and adding these equations, we can obtain the above identity by rearranging terms in the equation. 
\end{proof}
 
From the above result we can see that it suffices to bound the terms on the right hand side of the energy identity to get the error estimates in the energy norm. The first four terms can be handled with standard tools for Navier-Stokes equations, see \cite{Girault.V;Raviart.P.1986a,ZhangHeYang} for instance. In particular, we need the following continuity result for the advection term, see \cite{ZhangHeYang}: 

\begin{lemma}\label{convection}
For any $\bm{u}, \bm{v}, \bm{w} \in [H^1_0(\Omega)]^3$, we have
\[
L(\bm{w}; \bm{u}, \bm{v}) \le C \|\nabla \bm{w}\| \|\nabla \bm{u}\| \|\nabla \bm{v}\|, 
\]
where $C$ solely depends on the domain $\Omega$.
\end{lemma}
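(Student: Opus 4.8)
The plan is to establish this standard boundedness property of the Navier--Stokes trilinear form by expanding $L$ into its two convective pieces and estimating each with the generalized H\"older inequality together with the three-dimensional Sobolev embedding \eqref{def:C1}. By definition,
\[
L(\bm{w}; \bm{u}, \bm{v})=\tfrac{1}{2}\big[((\bm{w}\cdot\nabla)\bm{u}, \bm{v})-((\bm{w}\cdot\nabla)\bm{v}, \bm{u})\big],
\]
so it suffices to bound a generic term of the form $((\bm{w}\cdot\nabla)\bm{a}, \bm{b})=\sum_{i,j}\int_{\Omega} w_j\,(\partial_j a_i)\,b_i\,\mathrm{d}x$ and then sum the two contributions.

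First I would apply H\"older's inequality with the exponent triple $(3,2,6)$, which satisfies $\tfrac13+\tfrac12+\tfrac16=1$, to obtain
\[
|((\bm{w}\cdot\nabla)\bm{a}, \bm{b})|\le \|\bm{w}\|_{0,3}\,\|\nabla\bm{a}\|\,\|\bm{b}\|_{0,6}.
\]
The purpose of this particular choice is to place the single $L^2$ factor on the differentiated field $\nabla\bm{a}$, while leaving the undifferentiated factors $\bm{w}$ and $\bm{b}$ in the integrability ranges $L^3$ and $L^6$ that the embedding can absorb. Since $\Omega\subset\R^3$ is bounded, the embedding \eqref{def:C1} gives $\|\bm{b}\|_{0,6}\le C\|\nabla\bm{b}\|$; moreover $L^6(\Omega)\hookrightarrow L^3(\Omega)$ on a bounded domain, so $\|\bm{w}\|_{0,3}\le C\|\bm{w}\|_{0,6}\le C\|\nabla\bm{w}\|$, again by \eqref{def:C1}. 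This yields
\[
|((\bm{w}\cdot\nabla)\bm{a}, \bm{b})|\le C\,\|\nabla\bm{w}\|\,\|\nabla\bm{a}\|\,\|\nabla\bm{b}\|.
\]

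Applying this bound to the pair $(\bm{a},\bm{b})=(\bm{u},\bm{v})$ for the first term of $L$ and to $(\bm{a},\bm{b})=(\bm{v},\bm{u})$ for the second, and noting that the resulting right-hand side is symmetric in $\bm{u}$ and $\bm{v}$, I would combine the two estimates via the triangle inequality and the prefactor $\tfrac12$ to conclude $|L(\bm{w};\bm{u},\bm{v})|\le C\|\nabla\bm{w}\|\|\nabla\bm{u}\|\|\nabla\bm{v}\|$, with $C$ depending only on the embedding constant of $\Omega$. There is no genuine obstacle here: the argument is entirely standard, and the sole point requiring care is selecting the H\"older exponents so that exactly one factor carries the derivative. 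This is possible precisely because the critical Sobolev exponent in three dimensions is $6$; in higher dimensions the balance $\tfrac13+\tfrac12+\tfrac16=1$ would fail and either a stronger regularity assumption or a different splitting of the factors would be required.
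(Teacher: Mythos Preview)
Your proof is correct and entirely standard. The paper does not supply its own argument for this lemma; it merely cites \cite{ZhangHeYang} for the result, so your H\"older-plus-Sobolev-embedding proof fills in exactly the details the reference would contain.
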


In order to bound the last two terms, we need the following auxiliary results:

\begin{lemma}\label{aux1}
If the regularity assumption \eqref{regularity} is satisfied, we have
\begin{align*}
\Vert \bm{u} \times \bm{B} - \bm{u}_h \times \bm{B}_h \Vert & \leq 
C \big( \Vert \bm{u}\Vert_{0,\infty} \Vert \delta_{\bm{B}}\Vert 
+ R_{e} \Vert \bm{f}\Vert_{-1} \Vert \nabla_{h}\times e_{\bm{B}} \Vert 
+ R_{e}^{\frac{1}{2}} R_{m} \S^{-\frac{1}{2}} \Vert \bm{f} \Vert_{-1}
(\Vert e_{\bm{u}}\Vert_{1} + \Vert \delta_{\bm{u}}\Vert_{1}) \big),\\ 
\Vert e_{\bm{E}}\Vert & \leq \Vert \delta_{\bm{E}}\Vert + \Vert \bm{u} \times \bm{B} - \bm{u}_h \times \bm{B}_h \Vert.
\end{align*}
\end{lemma}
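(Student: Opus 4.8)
The plan is to prove the two estimates in sequence, starting with the second one since it is nearly immediate and feeds into the structure of the argument. For the bound on $\Vert e_{\bm{E}}\Vert$, I would recall from \eqref{err-2} that with $\bm{F}=e_{\bm{E}}$ we have $\S(\bm{j}-\bm{j}_h, e_{\bm{E}}) - \alpha(e_{\bm{B}}, \nabla\times e_{\bm{E}})=0$, and by \eqref{div0_curl0} we know $\nabla\times e_{\bm{E}}=0$, so the second term vanishes. Writing $\bm{j}-\bm{j}_h = (\bm{E}+\bm{u}\times\bm{B}) - (\bm{E}_h+\bm{u}_h\times\bm{B}_h)$ and decomposing $\bm{E}-\bm{E}_h = \delta_{\bm{E}}+e_{\bm{E}}$, I would test against $e_{\bm{E}}$ and isolate $\Vert e_{\bm{E}}\Vert^2$. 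The cross terms involving $\delta_{\bm{E}}$ and $\bm{u}\times\bm{B}-\bm{u}_h\times\bm{B}_h$ are controlled by Cauchy--Schwarz, and after dividing by $\Vert e_{\bm{E}}\Vert$ one obtains the stated triangle-type inequality $\Vert e_{\bm{E}}\Vert \leq \Vert \delta_{\bm{E}}\Vert + \Vert \bm{u}\times\bm{B}-\bm{u}_h\times\bm{B}_h\Vert$.

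The heart of the lemma is the first estimate, which I would attack by splitting the nonlinear difference algebraically. The standard trick is to write
\begin{align*}
\bm{u}\times\bm{B}-\bm{u}_h\times\bm{B}_h
= \bm{u}\times(\bm{B}-\bm{B}_h) + (\bm{u}-\bm{u}_h)\times\bm{B}_h,
\end{align*}
and then further decompose $\bm{B}-\bm{B}_h=\delta_{\bm{B}}+e_{\bm{B}}$ and $\bm{u}-\bm{u}_h=\delta_{\bm{u}}+e_{\bm{u}}$. The first piece $\bm{u}\times\delta_{\bm{B}}$ is bounded in $L^2$ by $\Vert\bm{u}\Vert_{0,\infty}\Vert\delta_{\bm{B}}\Vert$, giving the first term on the right-hand side. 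For $\bm{u}\times e_{\bm{B}}$, I would use H\"older's inequality with exponents $(6,3)$ to get $\Vert\bm{u}\Vert_{0,6}\Vert e_{\bm{B}}\Vert_{0,3}$, then apply Theorem \ref{L3estimate} to bound $\Vert e_{\bm{B}}\Vert_{0,3}\leq C\Vert\nabla_h\times e_{\bm{B}}\Vert$ (legitimate since $e_{\bm{B}}\in H_0^h(\div0,\Omega)$ by \eqref{div0_curl0}), together with the a priori bound $\Vert\nabla\bm{u}\Vert\leq R_e\Vert\bm{f}\Vert_{-1}$ from \eqref{stab_exact_solution} and Poincar\'e \eqref{def:C1} to produce the factor $R_e\Vert\bm{f}\Vert_{-1}\Vert\nabla_h\times e_{\bm{B}}\Vert$. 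For the remaining pieces $(\delta_{\bm{u}}+e_{\bm{u}})\times\bm{B}_h$, I would again use H\"older $(6,3)$ to get $\Vert\delta_{\bm{u}}+e_{\bm{u}}\Vert_{0,6}\Vert\bm{B}_h\Vert_{0,3}$, bound the velocity factor by $\Vert\nabla(\delta_{\bm{u}}+e_{\bm{u}})\Vert\leq\Vert\delta_{\bm{u}}\Vert_1+\Vert e_{\bm{u}}\Vert_1$ via Poincar\'e, and control $\Vert\bm{B}_h\Vert_{0,3}$ by the discrete stability estimate: since $\bm{B}_h\in H_0^h(\div0,\Omega)$, Theorem \ref{L3estimate} and energy estimate \eqref{energy-4} yield $\Vert\bm{B}_h\Vert_{0,3}\leq C\Vert\nabla_h\times\bm{B}_h\Vert\leq C R_e^{1/2}R_m\S^{-1/2}\Vert\bm{f}\Vert_{-1}$, which is exactly the coefficient appearing in the third term.

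The main obstacle I anticipate is the careful bookkeeping of which norm and which stability estimate to invoke for each of the four product terms, since the final bound mixes several distinct scalings in $R_e$, $R_m$, and $\S$. In particular, one must be consistent about bounding the \emph{discrete} magnetic field $\bm{B}_h$ (via the discrete $L^3$ estimate of Theorem \ref{L3estimate} applied to $\bm{B}_h$ itself) versus the \emph{exact} field $\bm{B}$ (via \eqref{stab_exact_solution}), and about applying Theorem \ref{L3estimate} only to the genuinely divergence-free discrete error $e_{\bm{B}}$. The subtlety is that $\delta_{\bm{B}}$ need not admit the same $L^3$-curl control, which is precisely why the term $\Vert\bm{u}\Vert_{0,\infty}\Vert\delta_{\bm{B}}\Vert$ is kept separate and estimated in plain $L^2$ using the $L^\infty$ regularity of $\bm{u}$ (available from the assumption $\bm{u}\in[H^{1+\sigma}(\Omega)]^3$ with $\sigma>\tfrac12$). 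Once the correct pairing of estimates is fixed, the rest is routine application of Cauchy--Schwarz, H\"older, Poincar\'e, and the two $L^3$ embeddings.
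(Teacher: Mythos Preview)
Your proposal is correct and follows essentially the same approach as the paper: the same algebraic splitting $\bm{u}\times\bm{B}-\bm{u}_h\times\bm{B}_h = \bm{u}\times\delta_{\bm{B}} + \bm{u}\times e_{\bm{B}} + (\delta_{\bm{u}}+e_{\bm{u}})\times\bm{B}_h$, the same H\"older pairings $(L^\infty,L^2)$ and $(L^6,L^3)$, the same use of Theorem~\ref{L3estimate} on $e_{\bm{B}}$ and $\bm{B}_h$, and the same stability bounds \eqref{stab_exact_solution} and \eqref{energy-4}. The second estimate is likewise obtained in the paper by testing \eqref{err-2} with $\bm{F}=e_{\bm{E}}$ and using $\nabla\times e_{\bm{E}}=0$, exactly as you outline.
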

 \begin{proof}
For $\|\bm{u} \times \bm{B} - \bm{u}_h \times \bm{B}_h\|$, we have
\begin{align}\nonumber
\|\bm{u} \times \bm{B} - \bm{u}_h \times \bm{B}_h\| &= \|\bm{u} \times \delta_{\bm{B}} + \bm{u} \times e_{\bm{B}} +(\delta_{\bm{u}} + e_{\bm{u}}) \times \bm{B}_h\| \\
\nonumber
& \le \|\bm{u} \times \delta_{\bm{B}}\| + \|\bm{u} \times e_{\bm{B}}\| + \|(\delta_{\bm{u}} + e_{\bm{u}}) \times \bm{B}_h\| \\
\nonumber
& \le \|\bm{u}\|_{0,\infty} \|\delta_{\bm{B}}\| + \|\bm{u}\|_{0,6} \|e_{\bm{B}}\|_{0,3} 
+ (\|\delta_{\bm{u}}\|_{0,6} + \|e_{\bm{u}}\|_{0,6}) \|\bm{B}_h\|_{0,3},
\end{align}
the last step is due to H\"{o}lder's inequality. 
By 
\eqref{def:C1} and Theorem \ref{L3estimate}, we have
\[
\|\bm{u} \times \bm{B} - \bm{u}_h \times \bm{B}_h\|
 \le C (\|\bm{u}\|_{0,\infty} \|\delta_{\bm{B}}\| + \|\bm{u}\|_{1} \|\nabla_h \times e_{\bm{B}}\|
 + (\|\nabla \delta_{\bm{u}}\| + \|\nabla e_{\bm{u}}\|) \|\nabla_h \times \bm{B}_h\|).
\]
Finally we can obtain the estimate for this term by the stability result in Theorem \ref{stability_fem} and Remark \ref{stab_const}. Next, taking $\bm{F} = e_{\bm{E}}$ in \eqref{err-2}, by \eqref{div0_curl0}, we have
\[
(\bm{j} - \bm{j}_h, e_{\bm{E}}) = 0.
\]
By the definition of $\bm{j}, \bm{j}_h$, we obtain:
\[
\|e_{\bm{E}}\|^2 = - (\delta_{\bm{E}}, e_{\bm{E}}) - (\bm{u} \times \bm{B} - \bm{u}_h \times \bm{B}_h, e_{\bm{E}}).
\]
The proof is completed by Cauchy-Schwarz inequality. 
 \end{proof}

Now we are ready to give our first error estimate:

\begin{theorem}\label{energy_est}
If the regulartity assumtion \eqref{regularity} holds, in addition, both $R_{e}^{2}\Vert \bm{f} \Vert_{-1}$ 
and $R_{e} R_{m}^{\frac{3}{2}} \Vert \bm{f} \Vert_{-1}$  are small enough, then we have
\[
R^{-\frac12}_e \|\nabla e_{\bm{u}}\| + \alpha^{\frac{1}{2}}\Vert e_{\bm{B}} \Vert_{0,3}
+ \alpha^{\frac12} \|\nabla_h \times e_{\bm{B}}\|
 \le \mathbb{C} (\|\delta_p\| + \|\nabla \delta_{\bm{u}}\| + (\|\bm{u}\|_{1 + \sigma} + \|\nabla \times \bm{B}\|_{\sigma})\|\delta_{\bm{B}}\| + \|\delta_{\bm{E}}\|),
\]
where $\mathbb{C}$ depends on all the parameters $R_m, R_e, \S$ and $\|\bm{f}\|_{-1}$.
\end{theorem}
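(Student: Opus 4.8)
The plan is to start from the energy identity in Lemma~\ref{energy_id} and estimate each of the five terms on its right-hand side, moving every contribution that is quadratic in the errors $(e_{\bm{u}}, e_{\bm{B}})$ back to the left, where it can be absorbed by $R_e^{-1}\|\nabla e_{\bm{u}}\|^2 + \alpha\|\nabla_h\times e_{\bm{B}}\|^2$ once the smallness hypotheses are in force. Note first that the third term on the left of the claimed inequality, $\alpha^{1/2}\|e_{\bm{B}}\|_{0,3}$, requires no separate argument: since $e_{\bm{B}} \in H_0^h(\div0, \Omega)$ by \eqref{div0_curl0}, Theorem~\ref{L3estimate} gives $\|e_{\bm{B}}\|_{0,3} \le C\|\nabla_h\times e_{\bm{B}}\|$, so a bound on $\alpha^{1/2}\|\nabla_h\times e_{\bm{B}}\|$ controls it automatically. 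Thus it suffices to bound $R_e^{-1/2}\|\nabla e_{\bm{u}}\|$ and $\alpha^{1/2}\|\nabla_h\times e_{\bm{B}}\|$.

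First I would dispatch the fluid terms. For the convection term I write $\bm{u}-\bm{u}_h = \delta_{\bm{u}} + e_{\bm{u}}$ and use trilinearity together with the skew-symmetry $L(\bm{w};\bm{z},\bm{z}) = 0$ to split
$$
L(\bm{u};\bm{u},e_{\bm{u}}) - L(\bm{u}_h;\bm{u}_h,e_{\bm{u}}) = L(\delta_{\bm{u}}+e_{\bm{u}};\bm{u},e_{\bm{u}}) + L(\bm{u}_h;\delta_{\bm{u}},e_{\bm{u}}),
$$
bounding each piece by Lemma~\ref{convection}. The genuinely quadratic piece $L(e_{\bm{u}};\bm{u},e_{\bm{u}})$ is controlled by $C\|\nabla\bm{u}\|\,\|\nabla e_{\bm{u}}\|^2 \le C R_e\|\bm{f}\|_{-1}\|\nabla e_{\bm{u}}\|^2$ via the exact-solution stability bound in Remark~\ref{stab_const}; this is exactly the contribution that must be absorbed into $R_e^{-1}\|\nabla e_{\bm{u}}\|^2$, forcing the smallness of $R_e^2\|\bm{f}\|_{-1}$. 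The pressure term $(\delta_p, \nabla\cdot e_{\bm{u}})$ and the term $R_e^{-1}(\nabla\delta_{\bm{u}}, \nabla e_{\bm{u}})$ are handled by Cauchy--Schwarz, producing the $\|\delta_p\|$ and $\|\nabla\delta_{\bm{u}}\|$ factors on the right of the theorem.

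The hard part is the magnetic coupling, namely the two terms $\S(\bm{j}\times\bm{B}-\bm{j}_h\times\bm{B}_h, e_{\bm{u}})$ and $\S(\bm{j}-\bm{j}_h, \nabla_h\times e_{\bm{B}})$. I would decompose $\bm{j}-\bm{j}_h = (\delta_{\bm{E}}+e_{\bm{E}}) + (\bm{u}\times\bm{B}-\bm{u}_h\times\bm{B}_h)$ and $\bm{B}-\bm{B}_h = \delta_{\bm{B}}+e_{\bm{B}}$, and feed the two estimates of Lemma~\ref{aux1} into the resulting inner products. Two phenomena must be tracked carefully. First, the term $\S(\bm{j}-\bm{j}_h, \nabla_h\times e_{\bm{B}})$ must \emph{not} be rewritten via \eqref{err-2} as $\alpha\|\nabla_h\times e_{\bm{B}}\|^2$, which would merely cancel the magnetic term on the left; instead its $e$-dependent pieces (entering through $\|e_{\bm{E}}\|$ and $\|\bm{u}\times\bm{B}-\bm{u}_h\times\bm{B}_h\|$ in Lemma~\ref{aux1}) generate factors like $\S R_e\|\bm{f}\|_{-1}\|\nabla_h\times e_{\bm{B}}\|^2$, which must be absorbed into $\alpha\|\nabla_h\times e_{\bm{B}}\|^2 = \S R_m^{-1}\|\nabla_h\times e_{\bm{B}}\|^2$; chasing all such factors through Lemma~\ref{aux1} and the stability bounds of Theorem~\ref{stability_fem} is what produces the second smallness requirement on $R_e R_m^{3/2}\|\bm{f}\|_{-1}$. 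Second, the $L^{\infty}$ and $L^3$/$L^6$ norms appearing in Lemma~\ref{aux1} must be converted to the Sobolev norms on the right of the theorem: $\|\bm{u}\|_{0,\infty} \le C\|\bm{u}\|_{1+\sigma}$ and, for the $\delta_{\bm{B}}$ contributions carrying $\bm{j} = R_m^{-1}\nabla\times\bm{B}$, $\|\bm{j}\|_{0,3} \le C\|\nabla\times\bm{B}\|_{\sigma}$, both valid since $\sigma > \tfrac12$; this is where $\|\bm{u}\|_{1+\sigma}$ and $\|\nabla\times\bm{B}\|_{\sigma}$ enter.

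Finally, collecting everything, all cross terms are split by Young's inequality so that the $\|\nabla e_{\bm{u}}\|$- and $\|\nabla_h\times e_{\bm{B}}\|$-factors are absorbed, while the approximation factors $\|\delta_p\|$, $\|\nabla\delta_{\bm{u}}\|$, $(\|\bm{u}\|_{1+\sigma}+\|\nabla\times\bm{B}\|_{\sigma})\|\delta_{\bm{B}}\|$ and $\|\delta_{\bm{E}}\|$ remain. Under the two smallness hypotheses the quadratic error terms are absorbed into the left-hand side at the cost of halving its constants, and dividing out yields the claimed bound with a constant $\mathbb{C}$ depending on $R_m, R_e, \S$ and $\|\bm{f}\|_{-1}$. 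The main obstacle throughout is the bookkeeping: ensuring that each nonlinear contribution either lands on an approximation error or is small enough to be absorbed, and with the correct powers of the physical constants.
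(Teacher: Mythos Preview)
Your proposal is correct and follows essentially the same route as the paper: start from the energy identity of Lemma~\ref{energy_id}, handle the Stokes-type terms by Cauchy--Schwarz and Lemma~\ref{convection}, split the Lorentz-force term $\S(\bm{j}\times\bm{B}-\bm{j}_h\times\bm{B}_h,e_{\bm{u}})$ via $\bm{B}-\bm{B}_h=\delta_{\bm{B}}+e_{\bm{B}}$ and $\bm{j}-\bm{j}_h$ into the three pieces $T_1,T_2,T_3$, bound $\S(\bm{j}-\bm{j}_h,\nabla_h\times e_{\bm{B}})$ through Lemma~\ref{aux1}, and invoke the embeddings $H^{1+\sigma}\hookrightarrow L^\infty$ and $H^\sigma\hookrightarrow L^3$ together with Theorem~\ref{L3estimate} before absorbing via Young's inequality under the two smallness conditions. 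Your remark that one must not collapse $\S(\bm{j}-\bm{j}_h,\nabla_h\times e_{\bm{B}})$ back to $\alpha\|\nabla_h\times e_{\bm{B}}\|^2$ via \eqref{err-2} is exactly the mechanism the paper exploits implicitly: expanding $\bm{j}-\bm{j}_h$ yields a coefficient strictly smaller than one in front of $\alpha\|\nabla_h\times e_{\bm{B}}\|^2$, which is what makes the argument close.
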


\begin{proof}
Since $\nabla\cdot e_{\bm{B}} = 0$ by (\ref{div0_curl0}), we can apply Theorem~\ref{L3estimate} to obtain 
\begin{align*}
\Vert e_{\bm{B}}\Vert_{0,3} \leq C \Vert \nabla_{h} \times e_{\bm{B}} \Vert.
\end{align*}

By Lemma \ref{energy_id}, it suffices to bound terms on the right hand side in the energy identity. The two bilinear terms can be bounded by using Cauchy-Schwarz inequality as,
\begin{align*}
(\delta_p, \nabla \cdot e_{\bm{u}}) &\le \|\delta_p\| \|\nabla e_{\bm{u}}\|, \\
R^{-1}_e (\nabla \delta_{\bm{u}}, \nabla e_{\bm{u}}) & \le R^{-1}_e \|\nabla \delta_{\bm{u}}\| \|\nabla e_{\bm{u}}\|.
\end{align*}
For the convection term, by Lemma \ref{convection} we have
\begin{align*}
L(\bm{u}; \bm{u}, e_{\bm{u}}) - L(\bm{u}_h; \bm{u_h}, e_{\bm{u}}) &= L(\bm{u} - \bm{u}_h; \bm{u}, e_{\bm{u}}) + L(\bm{u}_h; \bm{u} - \bm{u}_h, e_{\bm{u}}) \\
& \le  C(\|\nabla \delta_{\bm{u}}\| + \|\nabla e_{\bm{u}}\|) \|\nabla \bm{u}\| \|\nabla e_{\bm{u}}\| + C (\|\nabla \delta_{\bm{u}}\| + \|\nabla e_{\bm{u}}\|) \|\nabla \bm{u}_h\| \|\nabla e_{\bm{u}}\| \\
& \leq C R_{e}^{2}\Vert \bm{f}\Vert_{-1} \big(  R_{e}^{-1} \Vert \nabla e_{\bm{u}}\Vert^{2}
+ R_{e}^{-1}\Vert \nabla \delta_{\bm{u}} \Vert \cdot \Vert \nabla e_{\bm{u}} \Vert \big),
\end{align*} 
the last step is by the stability result \eqref{stab_exact_solution} in Remark \ref{stab_const}. 
In order to obtain the convergent result, we need $R_{e}^{2}\Vert \bm{f} \Vert_{-1}$ to be small enough.

Next we need to bound the last two terms in Lemma \ref{energy_id}. By Cauchy-Schwarz inequality we have
\begin{align*}
\S(\bm{j} - \bm{j}_h, \nabla_h \times e_{\bm{B}}) & \le \S \|\bm{j} - \bm{j}_h\| \|\nabla_h \times e_{\bm{B}}\| \\
&= \S \|\bm{E} + \bm{u} \times \bm{B} - (\bm{E}_h + \bm{u}_h \times \bm{B}_h)\| \|\nabla_h \times e_{\bm{B}}\| \\
& \le \S (\|\delta_{\bm{E}}\| + \|e_{\bm{E}}\| + \|\bm{u} \times \bm{B} - \bm{u}_h \times \bm{B}_h\|) \|\nabla_h \times e_{\bm{B}}\| \\
& \leq C \S \big( \Vert \delta_{\bm{E}} \Vert 
+ \Vert \bm{u}\Vert_{0,\infty} \Vert \delta_{\bm{B}}\Vert 
+ R_{e} \Vert \bm{f}\Vert_{-1} \Vert \nabla_{h}\times e_{\bm{B}} \Vert \\ 
& \qquad  + R_{e}^{\frac{1}{2}} R_{m} \S^{-\frac{1}{2}} \Vert \bm{f} \Vert_{-1}
(\Vert e_{\bm{u}}\Vert_{1} + \Vert \delta_{\bm{u}}\Vert_{1})
\big) \Vert \nabla_{h}\times e_{\bm{B}} \Vert \\
& = C \S \big( \Vert \delta_{\bm{E}} \Vert 
+ \Vert \bm{u}\Vert_{0,\infty} \Vert \delta_{\bm{B}}\Vert 
+ R_{e}^{\frac{1}{2}} R_{m} \S^{-\frac{1}{2}} \Vert \bm{f} \Vert_{-1}
\Vert \delta_{\bm{u}}\Vert_{1}\big) \Vert \nabla_{h}\times e_{\bm{B}} \Vert \\
& \qquad  + C \big(R_{e}\S \Vert \bm{f}\Vert_{-1} \Vert \nabla_{h}\times e_{\bm{B}} \Vert^{2} 
+ R_{e}^{\frac{1}{2}}R_{m}\S^{\frac{1}{2}} \Vert \bm{f} \Vert_{-1} 
\Vert e_{\bm{u}} \Vert_{1} \Vert \nabla_{h}\times e_{B} \Vert \big) \\
& \leq C \S \big( \Vert \delta_{\bm{E}} \Vert 
+ \Vert \bm{u}\Vert_{0,\infty} \Vert \delta_{\bm{B}}\Vert 
+ R_{e}^{\frac{1}{2}} R_{m} \S^{-\frac{1}{2}} \Vert \bm{f} \Vert_{-1}
\Vert \delta_{\bm{u}}\Vert_{1}\big) \Vert \nabla_{h}\times e_{\bm{B}} \Vert \\
& \qquad + C \big(R_{e}R_{m} \Vert \bm{f}\Vert_{-1} (\alpha \|\nabla_h \times e_{\bm{B}}\|^{2})
+R_{e}R_{m}^{\frac{3}{2}} \Vert \bm{f} \Vert_{-1}
(R^{-1}_e \|\nabla e_{\bm{u}}\|^{2} + \alpha \|\nabla_h \times e_{\bm{B}}\|^{2})  \big).
\end{align*}
In order to obtain the convergent result, we need 
$R_{e} R_{m}^{\frac{3}{2}} \Vert \bm{f} \Vert_{-1}$ to be small enough.
Finally, for the last term we begin by spliting the term into three terms and applying the generalized H\"{o}lder's inequality to have
\begin{align*}
\S(\bm{j} \times \bm{B} - \bm{j}_h \times \bm{B}_h,e_{\bm{u}}) & = \S (\bm{j} \times \delta_{\bm{B}}, e_{\bm{u}}) + \S(\bm{j} \times e_{\bm{B}}, e_{\bm{u}}) + \S(( \bm{j}- \bm{j}_h) \times \bm{B}_h,e_{\bm{u}}) \\
& = T_1 + T_2 + T_3. 
\end{align*}
By the fact that $\bm{j} = R^{-1}_m \nabla \times \bm{B}$, we can further apply the generalized H\"{o}lder's inequalities, Sobolev embedding inequalities, $H^{\sigma}(\Omega) \hookrightarrow L^3(\Omega)$, \eqref{def:C1} and Theorem \ref{L3estimate} for $T_1$, $T_{2}$ and  $T_3$ as:
\begin{align*}
T_1 &\le \S  \|\bm{j}\|_{0,3} \|\delta_{\bm{B}}\| \|e_{\bm{u}}\|_{0,6} \le C \S R^{-1}_m \|\nabla \times \bm{B}\|_{\sigma} \|\delta_{\bm{B}}\| \|\nabla e_{\bm{u}}\|, \\
T_2 &\le \S  \|\bm{j}\| \|e_{\bm{B}}\|_{0,3} \|e_{\bm{u}}\|_{0,6} \le C \S R^{-1}_m \|\nabla \times \bm{B}\| \|\nabla_h \times e_{\bm{B}}\| \|\nabla e_{\bm{u}}\| 
\le C R_{e}^{\frac{1}{2}} \S^{\frac{1}{2}} \|\bm{f}\|_{-1} \|\nabla_h \times e_{\bm{B}}\| \|\nabla e_{\bm{u}}\| \\
& \leq C R_{e} R_{m}^{\frac{1}{2}} \Vert \bm{f} \Vert_{-1}
 (R^{-1}_e \|\nabla e_{\bm{u}}\|^{2} + \alpha \|\nabla_h \times e_{\bm{B}}\|^{2}),\\
T_3 &\le \S \|\bm{j} - \bm{j}_h\| \|\bm{B}_h\|_{0,3} \|e_{\bm{u}}\|_{0,6} 
\le C \S  \|\bm{j} - \bm{j}_h\| \|\nabla_h \times \bm{B}_h\| \|\nabla e_{\bm{u}}\| 
\le C R_{e}^{\frac{1}{2}} R_{m} \S^{\frac{1}{2}}  \|\bm{f}\|_{-1} \|\bm{j} - \bm{j}_h\| \|\nabla e_{\bm{u}}\| \\ 
& \le C R_{e}^{\frac{1}{2}} R_{m} \S^{\frac{1}{2}} \|\bm{f}\|_{-1} \big( \Vert \delta_{\bm{E}} \Vert 
+ \Vert \bm{u}\Vert_{0,\infty} \Vert \delta_{\bm{B}}\Vert 
+ R_{e} \Vert \bm{f}\Vert_{-1} \Vert \nabla_{h}\times e_{\bm{B}} \Vert 
 + R_{e}^{\frac{1}{2}} R_{m} \S^{-\frac{1}{2}} \Vert \bm{f} \Vert_{-1}
(\Vert e_{\bm{u}}\Vert_{1} + \Vert \delta_{\bm{u}}\Vert_{1})
\big) \|\nabla e_{\bm{u}}\| \\
& \leq C R_{e}^{\frac{1}{2}} R_{m} \S^{\frac{1}{2}} \|\bm{f}\|_{-1} 
\big( \Vert \delta_{\bm{E}} \Vert + \Vert \bm{u}\Vert_{0,\infty} \Vert \delta_{\bm{B}}\Vert 
+ R_{e}^{\frac{1}{2}} R_{m} \S^{-\frac{1}{2}} \Vert \bm{f} \Vert_{-1} \Vert \delta_{\bm{u}} \Vert_{1} \big) \\
& \qquad + C R_{e}^{\frac{3}{2}} R_{m} \S^{\frac{1}{2}} \|\bm{f}\|_{-1}^{2}  
\Vert e_{\bm{u}}\Vert_{1} \Vert \nabla_{h}\times e_{\bm{B}} \Vert 
+ C R_{e}R_{m}^{2}\Vert \bm{f}\Vert_{-1}^{2} \Vert e_{\bm{u}}\Vert_{1}^{2} \\
& \leq C R_{e}^{\frac{1}{2}} R_{m} \S^{\frac{1}{2}} \|\bm{f}\|_{-1} 
\big( \Vert \delta_{\bm{E}} \Vert + \Vert \bm{u}\Vert_{0,\infty} \Vert \delta_{\bm{B}}\Vert 
+ R_{e}^{\frac{1}{2}} R_{m} \S^{-\frac{1}{2}} \Vert \bm{f} \Vert_{-1} \Vert \delta_{\bm{u}} \Vert_{1} \big) \\
& \qquad +R_{e} R_{m}^{\frac{3}{2}} \Vert \bm{f}\Vert_{-1}^{2} 
(R^{-1}_e \|\nabla e_{\bm{u}}\|^{2} + \alpha \|\nabla_h \times e_{\bm{B}}\|^{2}) 
+ C R_{e}^{2}R_{m}^{2}\Vert \bm{f}\Vert_{-1}^{2} (R^{-1}_e \|\nabla e_{\bm{u}}\|^{2}).
\end{align*}
Referring to $T_{2}$ and $T_{3}$, we need $R_{e} R_{m}^{\frac{1}{2}} \Vert \bm{f} \Vert_{-1}$, 
$R_{e} R_{m}^{\frac{3}{2}} \Vert \bm{f}\Vert_{-1}^{2}$ and 
$R_{e}^{2}R_{m}^{2}\Vert \bm{f}\Vert_{-1}^{2}$ to be small enough 
such that convergent results can be obtained.

So, if $R_{e}^{2}\Vert \bm{f} \Vert_{-1}$ and $R_{e} R_{m}^{\frac{3}{2}} \Vert \bm{f} \Vert_{-1}$ 
are both small enough, we have
\[
R^{-\frac12}_e \|\nabla e_{\bm{u}}\| + \alpha^{\frac12} \|\nabla_h \times e_{\bm{B}}\| \le \mathbb{C} (\|\delta_p\| + \|\nabla \delta_{\bm{u}}\| + (\|\bm{u}\|_{1 + \sigma} + \|\nabla \times \bm{B}\|_{\sigma})\|\delta_{\bm{B}}\| + \|\delta_{\bm{E}}\|).
\]
Here $\mathbb{C}$ depends on all the parameters $R_m, R_e, s$ and $\|\bm{f}\|_{-1}$. This completes the proof.
\end{proof}

\section{Nonlinear scheme for the alternative boundary condition}

We propose the following variational form for  \eqref{eq:MHD-stationary} with boundary condition (\ref{boundary_cond2}):
\begin{problem}\label{prob_bc2}
Find $(\bm{u}_h, \bm{E}_h, \bm{B}_h)\in \tilde{\bm{X}}_{h}$ and $ (p_h, r_h)\in \bm{Y}_{h}$, 
such that for any $(\bm{v}, \bm{F}, \bm{C})\in \tilde{\bm{X}}_{h} $ and $ (q, s)\in \bm{Y}_{h}$,
\begin{subequations}\label{fem-discretization_bc2}
\begin{equation}\label{fem-1_bc2}
 L(\bm{u}_h; \bm{u}_h, \bm{v})
  + R_{e}^{-1} (\nabla \bm{u}_h, \nabla \bm{v})  
  - \S (\bm{j}_h\times \bm{B}_h,\bm{v} ) - (p_h,\nabla\cdot \bm{v}) 
 = \langle \bm{f},\bm{v} \rangle,\\
 \end{equation}
 \begin{equation}\label{fem-2_bc2}
\S(\bm{j}_h,\bm{F}) - \alpha ( \bm{B}_h, \nabla\times \bm{F}) =0, 
\end{equation}
 \begin{equation}\label{fem-3_bc2}
 \alpha (\nabla\times \bm{E}_h, \bm{C}) +(r_h, \nabla\cdot \bm{C})= 0, 
 \end{equation}
 \begin{equation}\label{fem-4_bc2}
 -(\nabla\cdot \bm{u}_h, q) =0,
 \end{equation}
 \begin{equation}\label{fem-5_bc2}
 (\nabla\cdot \bm{B}_h, s) = 0, 
 \end{equation}
\end{subequations}
where $\bm{j}_h$ is given by  Ohm's law: 
$\bm{j}_h = \bm{E}_h + \bm{u}_h \times \bm{B}_h$ and $r_h$ is the Lagrange multiplier which approximates $r = 0$, 
and $\tilde{\bm{X}}_{h}= \bm{V}_{h}\times H^{h}(\mathrm{curl}, \Omega)
\times H^{h}(\mathrm{div},\Omega)$.
\end{problem}

Similar to Theorem~\ref{stability_fem}, we have Theorem~\ref{stability_fem_bc2}, whose 
proof is the same as that of Theorem~\ref{stability_fem}.

\begin{theorem}\label{stability_fem_bc2}
Any solution for  Problem \ref{prob_bc2} satisfies
\begin{enumerate}
\item
magnetic Gauss's law:
$$
\nabla\cdot \bm{B}_h=0.
$$
\item  Lagrange multiplier  $r=0$, and the strong form
$$
\nabla\times \bm{E}_h =0,
$$
\item energy estimates:
\begin{align}\label{energy-1_bc2}
{R_{e}^{-1}}\|\nabla\bm{u}_h\|^{2}+\S\|\bm{j}_h\|^{2} &= \langle \bm{f}, \bm{u}_h\rangle, \\
\label{energy-2_bc2}
\frac{1}{2}R_{e}^{-1}\|\nabla\bm{u}_h\|^{2}+\S\|\bm{j}_h\|^{2} &\leq \frac{R_{e}}{2}\|\bm{f}\|_{-1}^{2}, \\
\label{energy-3_bc2}
R_m^{-1} \|\nabla_h \times \bm{B}_h\| &\le  \|\bm{j}_h\|, \\
\label{energy-4_bc2}
\Vert \nabla_h \times \bm{B}_h \Vert & \le C R_{e}^{\frac{1}{2}} R_{m} \S^{-\frac{1}{2}} \Vert \bm{f} \Vert_{-1}, \\
\label{energy-5_bc2}
\|\bm{E}_h\| &\le C R_e^{\frac32} R_m \S^{-\frac{1}{2}} \|\bm{f}\|_{-1}.
\end{align}
\end{enumerate}
\end{theorem}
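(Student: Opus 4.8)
The plan is to reproduce the proof of Theorem~\ref{stability_fem} essentially verbatim, the only structural change being that all fields now live in the boundary-condition-free discrete de Rham sequence of Figure~\ref{exact-sequence-nobc}, so that $\bm B\times\bm n=\bm0$ and $\bm E\cdot\bm n=0$ are imposed \emph{naturally} rather than essentially. Every algebraic identity behind the original argument is independent of the boundary conditions and transfers unchanged: the de Rham relation $\nabla\cdot(\nabla\times\,\cdot)=0$, the surjectivity of the discrete divergence onto the relevant multiplier space, the definition \eqref{dis_curl} of the discrete curl, and the antisymmetry $L(\bm w;\bm u,\bm u)=0$. The \emph{one} substantive change is that the single place where an $L^3$ bound on $\bm B_h$ is needed must invoke the new Theorem~\ref{L3estimate_new} in place of Theorem~\ref{L3estimate}, since the latter is tied to $H^h_0(\div0,\Omega)$ while $\bm B_h$ now belongs to $H^h(\div0,\Omega)$.

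For parts (1) and (2) I would follow Theorem~\ref{stability_fem} line by line. Taking $\bm C=\nabla\times\bm E_h\in H^h(\curl,\Omega)$... that is, $\bm C=\nabla\times\bm E_h\in H^h(\div,\Omega)$, in \eqref{fem-3_bc2} and using $\nabla\cdot(\nabla\times\bm E_h)=0$ makes the multiplier term drop, leaving $\alpha\|\nabla\times\bm E_h\|^2=0$, hence $\nabla\times\bm E_h=\bm0$. Equation \eqref{fem-3_bc2} then collapses to $(r_h,\nabla\cdot\bm C)=0$ for all admissible $\bm C$, which forces $r_h=0$ because $r_h$ itself lies in the range of the discrete divergence. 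The magnetic Gauss law $\nabla\cdot\bm B_h=0$ follows directly from \eqref{fem-5_bc2} by the same reasoning, testing against $s=\nabla\cdot\bm B_h$.

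For the energy identities \eqref{energy-1_bc2}–\eqref{energy-4_bc2} I would test \eqref{fem-1_bc2}–\eqref{fem-4_bc2} with $\bm v=\bm u_h$, $\bm F=\bm E_h$, $\bm C=\bm B_h$, $q=p_h$ and add. The convection term vanishes by antisymmetry, the two pressure terms cancel, the two occurrences of $\alpha(\bm B_h,\nabla\times\bm E_h)$ cancel against each other, and $(r_h,\nabla\cdot\bm B_h)$ vanishes; regrouping $-\S(\bm j_h\times\bm B_h,\bm u_h)+\S(\bm j_h,\bm E_h)=\S(\bm j_h,\bm E_h+\bm u_h\times\bm B_h)=\S\|\bm j_h\|^2$ gives \eqref{energy-1_bc2}, and Young's inequality on $\langle\bm f,\bm u_h\rangle$ gives \eqref{energy-2_bc2}. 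Testing \eqref{fem-2_bc2} with the discrete curl of $\bm B_h$ and using \eqref{dis_curl} yields $R_m^{-1}\|\nabla_h\times\bm B_h\|^2=(\bm j_h,\nabla_h\times\bm B_h)\le\|\bm j_h\|\,\|\nabla_h\times\bm B_h\|$, i.e.\ \eqref{energy-3_bc2}, and \eqref{energy-4_bc2} is then immediate from \eqref{energy-2_bc2} and \eqref{energy-3_bc2}.

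The only genuinely new step is \eqref{energy-5_bc2}, and this is where I expect the one real obstacle. Testing \eqref{fem-2_bc2} with $\bm F=\bm E_h$ and using $\nabla\times\bm E_h=\bm0$ gives $(\bm j_h,\bm E_h)=0$, whence $\|\bm E_h\|^2=-(\bm u_h\times\bm B_h,\bm E_h)\le\|\bm u_h\|_{0,6}\,\|\bm B_h\|_{0,3}\,\|\bm E_h\|$ by the generalized H\"older inequality. At this point the proof of Theorem~\ref{stability_fem} applies the $L^3$ estimate, but Theorem~\ref{L3estimate} is unavailable under the tangential condition $\bm B\times\bm n=\bm0$; it is precisely Theorem~\ref{L3estimate_new} that supplies $\|\bm B_h\|_{0,3}\le C\|\nabla_h\times\bm B_h\|$ in this setting. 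Combining with \eqref{def:C1} gives $\|\bm E_h\|\le C\|\nabla\bm u_h\|\,\|\nabla_h\times\bm B_h\|$, and inserting $\|\nabla\bm u_h\|\le R_e\|\bm f\|_{-1}$ from \eqref{energy-2_bc2} together with \eqref{energy-4_bc2} yields \eqref{energy-5_bc2}. The substance of the theorem is therefore carried entirely by Theorem~\ref{L3estimate_new}: once that embedding is granted, the remaining manipulations are routine and identical to the homogeneous-boundary case.
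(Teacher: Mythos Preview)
Your proposal is correct and follows exactly the approach the paper intends: the paper simply states that the proof of Theorem~\ref{stability_fem_bc2} ``is the same as that of Theorem~\ref{stability_fem},'' and you have faithfully reproduced that argument, correctly identifying that the single substantive change is the replacement of Theorem~\ref{L3estimate} by Theorem~\ref{L3estimate_new} in the bound for $\|\bm E_h\|$. The minor self-correction about $\nabla\times\bm E_h\in H^h(\div,\Omega)$ and the phrasing ``the two pressure terms cancel'' (more precisely, \eqref{fem-4_bc2} shows the pressure term vanishes) are cosmetic and do not affect the validity of the argument.
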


Similar to the argument in Section~\ref{sec_scheme}, we can conclude that Problem~\ref{prob_bc2} is well-posed.

We define $e_{\bm{u}}$, $\delta_{\bm{u}}$, $e_{p}$, $\delta_{p}$, $e_{r}$, $\delta_{r}$ the same 
as those in Section~\ref{sec_conv}. We use $\Pi^{\curl}$ in Figure~\ref{exact-sequence-nobc} 
for the electric field $\bm{E}$. We define $e_{\bm{E}} = \Pi^{\curl} \bm{E} - \bm{E}_{h}$ 
and $\delta_{\bm{E}} = \bm{E} - \Pi^{\curl}\bm{E}$. 
For the magnetic field $\bm{B}$, we define the $L^{2}$-projection 
$\Pi^{\tilde{D}}$ into $H^{h}(\div0, \Omega)$. 
We denote  $e_{\bm{B}} = \Pi^{\tilde{D}}\bm{B} - \bm{B}_{h}$ and 
$\delta_{\bm{B}} = \bm{B} - \Pi^{\tilde{D}}\bm{B}$. 
It is easy to see that 
\begin{align*}
&\nabla\cdot e_{\bm{B}} = 0,\\
& (\bm{B} - \Pi^{\tilde{D}}\bm{B}, \nabla\times \bm{F})=0 \quad \text{ for all } F \in H^{h}(\curl, \Omega), \\
& \Vert  \bm{B} - \Pi^{\tilde{D}}\bm{B}\Vert \leq C \inf_{\bm{C}\in H^{h}(\div,\Omega)} \Vert \bm{B} - \bm{C} \Vert.
\end{align*}
Thus by using Theorem~\ref{L3estimate_new} to replace Theorem~\ref{L3estimate}, we can 
use the same argument in Section~\ref{sec_conv} to obtain Theorem~\ref{energy_est_bc2}.

\begin{theorem}\label{energy_est_bc2}
If the regulartity assumtion \eqref{regularity} holds, in addition, both $R_{e}^{2}\Vert \bm{f} \Vert_{-1}$ 
and $R_{e} R_{m}^{\frac{3}{2}} \Vert \bm{f} \Vert_{-1}$  are small enough, then we have
\[
R^{-\frac12}_e \|\nabla e_{\bm{u}}\| + \alpha^{\frac{1}{2}}\Vert e_{\bm{B}} \Vert_{0,3}
+ \alpha^{\frac12} \|\nabla_h \times e_{\bm{B}}\|
 \le \mathbb{C} (\|\delta_p\| + \|\nabla \delta_{\bm{u}}\| + (\|\bm{u}\|_{1 + \sigma} + \|\nabla \times \bm{B}\|_{\sigma})\|\delta_{\bm{B}}\| + \|\delta_{\bm{E}}\|),
\]
where $\mathbb{C}$ depends on all the parameters $R_m, R_e, \S$ and $\|\bm{f}\|_{-1}$.
\end{theorem}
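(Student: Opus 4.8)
The plan is to replay, essentially line by line, the argument that established Theorem~\ref{energy_est}, replacing every use of the homogeneous-boundary $L^3$ estimate (Theorem~\ref{L3estimate}) and the discrete curl $\nabla_h\times$ by their no-boundary-condition counterparts, namely Theorem~\ref{L3estimate_new} and the operator $\tilde{\nabla}_h\times$. Since the exact solution satisfies the discrete formulation \eqref{fem-discretization_bc2}, I would first subtract the discrete system from it and split each error as $\mathcal{U}-\mathcal{U}_h=\delta_U+e_U$ to obtain error equations structurally identical to \eqref{err-1}--\eqref{err-5}. The three projection properties recorded just before the theorem---that $\nabla\cdot e_{\bm{B}}=0$, that $(\bm{B}-\Pi^{\tilde{D}}\bm{B},\nabla\times\bm{F})=0$ for all $\bm{F}\in H^{h}(\curl,\Omega)$, and the quasi-optimal bound on $\|\bm{B}-\Pi^{\tilde{D}}\bm{B}\|$---play exactly the roles that \eqref{div0_curl0}, \eqref{proj_B1} and \eqref{proj_B2} played before, so the error equations acquire no new terms.

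Next I would reproduce the energy identity of Lemma~\ref{energy_id}: testing with $\bm{v}=e_{\bm{u}}$, $\bm{F}=-\tilde{\nabla}_h\times e_{\bm{B}}$ and $q=e_p$ and summing, the duality defining $\tilde{\nabla}_h\times$ turns the magnetic equation into $\alpha\|\tilde{\nabla}_h\times e_{\bm{B}}\|^2=\S(\bm{j}-\bm{j}_h,\tilde{\nabla}_h\times e_{\bm{B}})$, which yields
\[
R_e^{-1}\|\nabla e_{\bm{u}}\|^2 + \alpha\|\tilde{\nabla}_h\times e_{\bm{B}}\|^2
= -\big(L(\bm{u};\bm{u},e_{\bm{u}})-L(\bm{u}_h;\bm{u}_h,e_{\bm{u}})\big)
+ (\delta_p,\nabla\cdot e_{\bm{u}}) - R_e^{-1}(\nabla\delta_{\bm{u}},\nabla e_{\bm{u}})
+ \S(\bm{j}\times\bm{B}-\bm{j}_h\times\bm{B}_h,e_{\bm{u}}) + \S(\bm{j}-\bm{j}_h,\tilde{\nabla}_h\times e_{\bm{B}}).
\]
I would then invoke Theorem~\ref{L3estimate_new}: because $e_{\bm{B}}\in H^{h}(\div0,\Omega)$ we get $\|e_{\bm{B}}\|_{0,3}\le C\|\tilde{\nabla}_h\times e_{\bm{B}}\|$, the precise analogue of the step opening the proof of Theorem~\ref{energy_est}. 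With this I would re-derive the two auxiliary bounds of Lemma~\ref{aux1}, using the stability estimates of Theorem~\ref{stability_fem_bc2} (identical in form to Theorem~\ref{stability_fem}) together with Theorem~\ref{L3estimate_new} in place of Theorem~\ref{L3estimate}.

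The remaining work is the term-by-term estimation of the right-hand side, verbatim as in Theorem~\ref{energy_est}. The two linear terms are controlled by Cauchy--Schwarz, the convection difference by Lemma~\ref{convection} and the stability bound \eqref{stab_exact_solution}, and the two nonlinear couplings $\S(\bm{j}-\bm{j}_h,\tilde{\nabla}_h\times e_{\bm{B}})$ and $\S(\bm{j}\times\bm{B}-\bm{j}_h\times\bm{B}_h,e_{\bm{u}})$ by splitting into $\delta$- and $e$-parts and applying the generalized H\"older inequality with the embeddings $H^1_0\hookrightarrow L^6$, $H^\sigma\hookrightarrow L^3$ and the $L^3$ estimate. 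Every term carrying $R_e^{-1}\|\nabla e_{\bm{u}}\|^2$ or $\alpha\|\tilde{\nabla}_h\times e_{\bm{B}}\|^2$ appears multiplied by a power of $R_e^{2}\|\bm{f}\|_{-1}$ or $R_e R_m^{3/2}\|\bm{f}\|_{-1}$, so under the stated smallness hypotheses these are absorbed into the left-hand side, leaving the asserted bound.

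The main obstacle is, by design, already behind us: the sole point at which the two boundary conditions genuinely diverge is the $L^3$ embedding for discrete divergence-free fields, and that difficulty was isolated and overcome in Theorem~\ref{L3estimate_new}, whose proof via the Hodge map $H_d$ and the auxiliary problem \eqref{aux_problem1} carries the real analytic content. Once that estimate is in hand, the task reduces to bookkeeping: one must verify that each ingredient of Section~\ref{sec_conv}---the commuting-diagram projections of Figure~\ref{exact-sequence-nobc}, the divergence-free property of $e_{\bm{B}}$, the stability estimates, and the discrete-curl duality in the spirit of \eqref{dis_curl}---has a faithful counterpart with the tilde operators, so that no step of the original argument breaks. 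I expect that no essentially new estimate is required.
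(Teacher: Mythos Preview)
Your proposal is correct and follows essentially the same approach as the paper, which simply states that the argument of Section~\ref{sec_conv} goes through verbatim once Theorem~\ref{L3estimate} is replaced by Theorem~\ref{L3estimate_new}. Your write-up is in fact more detailed than the paper's own justification, correctly identifying that the discrete curl must be taken as $\tilde{\nabla}_h\times$ and that the projections from Figure~\ref{exact-sequence-nobc} supply the needed analogues of \eqref{div0_curl0}--\eqref{proj_B2}.
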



\section{Conclusion}
We analyzed a mixed finite element scheme for the stationary MHD system where both the electric and the magnetic fields were discretized on a discrete de Rham complex. Two types of boundary conditions were considered.  We rigorously established the well-posedness and proved the convergence of the finite element schemes based on weak regularity assumptions.

The electric-magnetic mixed formulation (also see \cite{hu2014stable, hu2015structure}) and the technical tools developed in this paper may also be useful for a broader class of plasma models and numerical methods, for example, compressible MHD models and discontinuous Galerkin methods (c.f. \cite{qiu2017analysis, li2005locally, warburton1999discontinuous, shadid2010towards}).

The theoretical analysis in this paper also lays a foundation for further investigation of block preconditioners for stationary MHD systems (c.f. \cite{ma2016robust, cyr2013new}).



\bibliographystyle{plain}      
\bibliography{BEconvergence}{}   

\end{document}